\newcommand{\mtz}{\mathcal{T}}
\newcommand{\C}{\mathbb{C}}    
\newcommand{\N}{\mathbb{N}}    
\newcommand{\NN}{\mathbb{N}_0} 
\newcommand{\R}{\mathbb{R}}    
\newcommand{\Z}{\mathbb{Z}}    
\newcommand{\dm}{\mathsf{M}}
\newcommand{\bs}{\backslash}
\newcommand{\cd}{\mathcal{R}}  
\newcommand{\sd}{\mathcal{S}}  
\newcommand{\tz}{\mathcal{T}}  
\newcommand{\sr}{\operatorname{sr}}  
\newcommand{\sm}{\operatorname{sm}}  
\newcommand{\fs}{\operatorname{fsupp}}  
\newcommand{\setsp}{\;:\;}     
\newcommand{\Lp}[1]{L_{#1}(\mathbb{R})}
\newcommand{\gl}{\lambda}
\newcommand{\td}{\boldsymbol{\delta}}  
\newcommand{\CH}[1]{\mathscr{C}^{#1}(\R)} 
\newcommand{\lp}[1]{l_{#1}(\mathbb{Z})}
\newcommand{\pa}{\mathsf{a}}
\newcommand{\pb}{\mathsf{b}}
\newcommand{\pc}{\mathsf{c}}
\newcommand{\wh}{\widehat}
\newcommand{\pp}{\mathsf{p}}
\newcommand{\pu}{\mathsf{u}}
\newcommand{\pv}{\mathsf{v}}
\newcommand{\bo}{\mathscr{O}} 
\newcommand{\be}{ \begin{equation} }
\newcommand{\ee}{ \end{equation} }
\newcommand{\bp}{ \begin{proof} }
\newcommand{\ep}{\hfill \end{proof} }
\newcommand{\imply}{ \Longrightarrow }
\newtheorem{lemma}{Lemma}
\newtheorem{prop}[lemma]{Proposition}
\newtheorem{cor}[lemma]{Corollary}
\newtheorem{theorem}[lemma]{Theorem}
\newtheorem{example}{Example}
\newtheorem{definition}{Definition}
\numberwithin{equation}{section}
\begin{document}

\title[Interpolating Refinable Functions and Subdivision Schemes]
{Interpolating Refinable Functions and $n_s$-step Interpolatory Subdivision Schemes}

\author{Bin Han}

\address{Department of Mathematical and Statistical Sciences,
University of Alberta, Edmonton,\quad Alberta, Canada T6G 2G1.
\quad {\tt bhan@ualberta.ca}\quad {\tt http://www.ualberta.ca/$\sim$bhan}
}

\thanks{Research was supported in part by the Natural Sciences and Engineering Research Council of Canada (NSERC) under grant RGPIN 2024-04991.
}

\thanks{Contact information of the corresponding author Bin Han: E-mail: bhan@ualberta.ca, Phone: 1-587-8828375, Fax: 1-780-4926826,  Web: http://www.ualberta.ca/$\sim$bhan}

\makeatletter \@addtoreset{equation}{section} \makeatother

\begin{abstract}
Standard interpolatory subdivision schemes and their underlying interpolating refinable functions are of interest in CAGD, numerical PDEs, and approximation theory. Generalizing these notions, we introduce and study $n_s$-step interpolatory $\dm$-subdivision schemes and their interpolating $\dm$-refinable functions with
$n_s\in \N \cup\{\infty\}$ and
a dilation factor $\dm\in \N\bs\{1\}$.
We completely characterize $\mathscr{C}^m$-convergence and smoothness of $n_s$-step interpolatory subdivision schemes and their interpolating $\dm$-refinable functions in terms of their masks. Inspired by $n_s$-step interpolatory stationary subdivision schemes, we further introduce the notion of $r$-mask quasi-stationary subdivision schemes, and then we characterize their $\mathscr{C}^m$-convergence and smoothness properties using only their masks. Moreover, combining $n_s$-step interpolatory subdivision schemes with $r$-mask quasi-stationary subdivision schemes, we can obtain $r n_s$-step interpolatory subdivision schemes.
Examples and construction procedures of convergent $n_s$-step interpolatory $\dm$-subdivision schemes are provided to illustrate our results with dilation factors $\dm=2,3,4$. In addition, for the dyadic dilation $\dm=2$ and $r=2,3$, using $r$ masks with only two-ring stencils, we provide examples of $\mathscr{C}^r$-convergent $r$-step interpolatory $r$-mask quasi-stationary dyadic subdivision schemes.
\end{abstract}

\keywords{$n_s$-step interpolatory subdivision schemes, $s_a$-interpolating refinable functions, $r$-mask quasi-stationary, convergence, smoothness, sum rules}

\subjclass[2010]{42C40, 41A05, 65D17, 65D05}
\maketitle

\pagenumbering{arabic}

\section{Introduction and Main Results}
\label{sec:intro}

In this paper we are interested in interpolatory subdivision schemes and their interpolating refinable functions, because such functions are the backbone for building wavelets for image processing and numerical PDEs, and in computer aided geometric design (CAGD) for developing fast computational algorithms. Throughout this paper, a positive integer $\dm\in \N\bs\{1\}$ is called \emph{a dilation factor}.
By $\lp{0}$ we denote the space of all finitely supported sequences $a=\{a(k)\}_{k\in \Z}: \Z \rightarrow \C$. For any finitely supported sequence $a\in \lp{0}$, its symbol $\tilde{\pa}(z)$ is a Laurent polynomial defined by
%
\[
\tilde{\pa}(z):=\sum_{k\in \Z} a(k) z^k,\qquad z\in \C\bs\{0\}.
\]
For $a\in \lp{0}$ satisfying $\sum_{k\in \Z} a(k)=1$ (i.e., $\tilde{\pa}(1)=1$), we can define a compactly supported distribution $\phi$ through the Fourier transform $\wh{\phi}(\xi):=\prod_{j=1}^\infty \tilde{\pa}(e^{-i \dm^{-j}\xi})$ for $\xi \in \R$, where the Fourier transform here is defined to be $\wh{f}(\xi):=\int_\R f(x) e^{-ix\xi}dx, \xi\in \R$ for integrable functions $f$ and can be naturally extended to tempered distributions through duality. Note that $\wh{\phi}(0)=1$. It is well known and straightforward that $\phi$ is \emph{an $\dm$-refinable function} satisfying the following refinement equation:
\be \label{refeq}
\phi=\dm \sum_{k\in \Z} a(k)\phi(\dm \cdot-k),\quad \mbox{or equivalently},\quad
\wh{\phi}(\dm \xi)=\tilde{\pa}(e^{-i\xi}) \wh{\phi}(\xi),
\ee
where the sequence $a\in \lp{0}$ in \eqref{refeq} is often called the \emph{mask} for the $\dm$-refinable function $\phi$.

An interpolating function $\phi$ is
a continuous function on the real line $\R$ such that $\phi(k)=\td(k)$ for all $k\in \Z$, where $\td$ is the \emph{Dirac sequence} such that $\td(0)=1$ and $\td(k)=0$ for all $k\in \Z\bs \{0\}$.  The simplest example of compactly supported interpolating functions is probably the hat function
\be \label{hatfunc}
\phi(x):=\max(1-|x|,0), \qquad x\in \R,
\ee
which is used in numerical PDEs and approximation theory. Note that the hat function  $\phi$ in \eqref{hatfunc} is $\dm$-refinable with a mask $a\in \lp{0}$ given by $\tilde{\pa}(z)=\dm^{-2}z^{1-\dm}(1+z+\cdots+z^{\dm-1})^2$.
Therefore, the hat function $\phi$ in \eqref{hatfunc} is also used to build wavelets  for their applications to image processing and computational mathematics.
For $s_a\in \R$, generalizing standard interpolating functions and motivated by \cite{grv22,rv20,rom19}, in this paper we consider a more general class of interpolating functions $\phi$ satisfying
\be \label{intphi:sa}
\phi(s_a+k)=\td(k),\qquad \forall\; k\in \Z.
\ee
For simplicity, we call $\phi$ \emph{an $s_a$-interpolating function} if it is continuous and satisfies \eqref{intphi:sa}. For a given function $f$ on $\R$ and a mesh size $h>0$,
the interpolation property in \eqref{intphi:sa} guarantees that $g(x):=\sum_{k\in \Z} f(hk)\phi(h^{-1} x+s_a-k)$ interpolates $f$ in the sense that $g(hk)=f(hk)$ for all $k\in \Z$.

In this paper, we are interested in $s_a$-interpolating $\dm$-refinable functions and their intrinsic connections to interpolatory subdivision schemes.
Except spline refinable functions such as the hat function in \eqref{hatfunc}, an $\dm$-refinable function $\phi$ with a mask $a\in \lp{0}$ generally cannot have any analytic expression (e.g., see \cite[Section~6.1]{hanbook}).
Consequently, a subdivision scheme is often employed to approximate a refinable function $\phi$ using its mask $a\in \lp{0}$. By $\lp{}$ we denote the space of all sequences $v=\{v(k)\}_{k\in \Z}:\Z\rightarrow \C$.
The \emph{$\dm$-subdivision operator} $\sd_{a,\dm}: \lp{}\rightarrow \lp{}$ is defined to be
\be \label{sd}
[\sd_{a,\dm} v](j):=\dm \sum_{k\in \Z} v(k) a(j-\dm k),\quad j\in \Z, v\in \lp{}.
\ee
For many applications such as CAGD and numerical algorithms, the subdivision operator in \eqref{sd} is often implemented using convolution and coset masks.
For $u,v\in \lp{0}$, their convolution is defined to be $[u*v](j)=\sum_{k\in \Z} u(k) v(j-k)$ for $j\in \Z$. Note that the symbol of $u*v$ is just $\tilde{\pu}(z) \tilde{\pv}(z)$.
For a mask $a\in \lp{0}$ and $\gamma\in \Z$, its $\gamma$-coset mask $a^{[\gamma:\dm]}$ is defined to be
\be \label{coset}
a^{[\gamma:\dm]}(k):=a(\gamma+\dm k),\qquad k\in \Z.
\ee
Then the definition of the $\dm$-subdivision operator $\sd_{a,\dm}$ in \eqref{sd} can be equivalently expressed as
\[
[\sd_{a,\dm} v]^{[\gamma:\dm]}(j)=
[\sd_{a,\dm} v](\gamma+\dm j)=\dm \sum_{k\in \Z} v(k) a(\gamma+\dm (j-k))=
\dm [v* a^{[\gamma: \dm]}](j),\qquad j\in \Z.
\]
Hence, for $\gamma=0,\ldots,\dm-1$, each $\dm a^{[\gamma:\dm]}$ is called a stencil in CAGD and is an $n$-ring stencil if $a^{[\gamma:\dm]}$ is supported inside $[-n,n+\td(\gamma)-1]$. Note that a mask $a\in \lp{0}$ has at most $n$-ring stencils if and only if the mask $a$ is supported inside $[-\dm n, \dm n]$. In CAGD and other applications, it is highly desired to have subdivision schemes with small $n$-ring  stencils for fast implementation and for reducing the number of special  subdivision rules near extraordinary vertices of subdivision surfaces. However, this greatly restricts the choices of desired subdivision schemes and smooth refinable functions. Consequently, new settings and ideas are needed to circumvent this obstacle.

Starting from an initial sequence $v\in \lp{}$, an $\dm$-subdivision scheme iteratively computes a sequence $\{\sd_{a,\dm}^n v\}_{n=1}^\infty$ of subdivision data.
The backward difference operator $\nabla: \lp{}\rightarrow \lp{}$ is defined to be
\[
[\nabla v](k):=v(k)-v(k-1),\qquad k\in \Z, v\in \lp{}
\quad \mbox{with the convention}\quad \nabla^0 v:=v.
\]
We now recall the definition of the $\mathscr{C}^m$-convergence of a (stationary) $\dm$-subdivision scheme below (e.g., see \cite[Theorem~2.1]{hj06}) and discuss the notion of $\infty$-step interpolatory subdivision schemes:

\begin{definition}\label{def:sd}
{\rm Let $\dm\in \N\bs\{1\}$ be a dilation factor. Let $m\in \NN:=\N\cup\{0\}$ and $a\in \lp{0}$ be a finitely supported mask satisfying $\sum_{k\in \Z} a(k)=1$. We say that \emph{the $\dm$-subdivision scheme with mask $a\in \lp{0}$ is $\mathscr{C}^m$-convergent} if for every initial input sequence $v\in \lp{}$, there exists a continuous function $\eta_v\in \CH{m}$ such that for every constant $K>0$,
\be \label{converg}
\lim_{n\to\infty} \max_{k\in \Z\cap [-\dm^n K, \dm^n K]} |\dm^{jn} [\nabla^j \sd_{a,\dm}^n v](k)-\eta_v^{(j)}(\dm^{-n} k)|=0, \qquad \mbox{for all } j=0,\ldots,m,
\ee
where $\eta_v^{(j)}$ stands for the $j$th derivative of the function $\eta_v$. In addition, we say that a $\mathscr{C}^0$-convergent $\dm$-subdivision scheme with mask $a\in \lp{0}$ is \emph{$\infty$-step interpolatory with center $s_a\in \R$} if
\be \label{inftystep}
\eta_v(s_a+k)=v(k),\qquad \forall\; k\in \Z, v\in \lp{}.
\ee
}\end{definition}

For a convergent subdivision scheme with mask $a\in \lp{0}$, the limit function $\eta_{v}$ in \cref{def:sd} with the initial input sequence $v=\td$ is called its \emph{basis function}, which must be the $\dm$-refinable function $\phi$ with the mask $a$ (e.g., see \cref{sec:proof} for details).
For every $v\in \lp{}$, noting that $v=\sum_{k\in \Z} v(k) \td(\cdot-k)$ and the $\dm$-subdivision scheme is linear, we have
$\eta_v=\sum_{k\in \Z} v(k) \phi(\cdot-k)$.
Now it is evident that $\phi$ is $s_a$-interpolating (i.e., $\phi(s_a+k)=\td(k)$ for all $k\in \Z$) if and only if \eqref{inftystep} holds, i.e., its convergent subdivision scheme must be $\infty$-step interpolatory with the center $s_a$.
Therefore, to study the convergence of a subdivision scheme, it is critical to investigate its $\dm$-refinable function $\phi$ with a mask $a\in \lp{0}$. If $\phi$ is a standard interpolating function, i.e., $\phi$ is $0$-interpolating, then $\eta_v(k)=v(k)$ for all $k\in \Z$ and $v\in \lp{}$.
Such a subdivision scheme is called \emph{a standard interpolatory $\dm$-subdivision scheme},
whose mask $a$ must be $\dm$-interpolatory satisfying the condition $a(\dm k)=\dm^{-1}\td(k)$ for all $k\in \Z$.
Standard interpolatory subdivision schemes have been extensively studied and constructed in the literature, for example, see \cite{cdm91,cr13,dd89,dvms00,dl02,dl87,hanbook,hj99} and references therein.

Masks having the symmetry property are of particular interest in CAGD and wavelet analysis (e.g., see \cite{dl02,han98,hanbook,hm21}). For a mask $a\in \lp{0}$, we say that $a$ is \emph{symmetric} about the point $c_a/2$ if
\be \label{symmask}
a(c_a-k)=a(k) \qquad \forall\; k\in \Z \quad \mbox{with}\quad c_a\in \Z.
\ee
A subdivision scheme with a symmetric mask $a$ in \eqref{symmask} for an odd (or even) integer $c_a$
is called a dual (or primal) subdivision scheme in CAGD. As pointed out in \cite{grv22},
an open question was asked by M. Sabin: Does there exist an interpolatory dual subdivision scheme which is similar to interpolatory primal subdivision schemes? This question has been recently answered by L.~Romani and her collaborators in the interesting papers \cite{grv22,rv20,rom19}, showing that this is only possible for $\dm>2$. Moreover, for dilation factors $\dm>2$, interesting results and several examples are presented in  \cite{grv22,rv20,rom19,v23}, which have greatly motivated this paper. In particular, for $\dm=2$ (which is the most common choice in CAGD and wavelet analysis), dropping the symmetry property in \eqref{symmask}, we are interested in whether there exists an $s_a$-interpolating $2$-refinable function with $s_a\not \in \Z$.
This further motivates us to characterize all $s_a$-interpolating $\dm$-refinable functions and their $\infty$-step interpolatory $\dm$-subdivision schemes in terms of their masks. Indeed, we show in \cref{ex:M2,ex:M2a} that there are $s_a$-interpolating $2$-refinable functions with $s_a\in \{\frac{1}{3},\frac{1}{7}\}$ and their dyadic subdivision schemes are $2$-step or $3$-step interpolatory.

To present our main results in this paper on $s_a$-interpolating $\dm$-refinable functions and their associated $\dm$-subdivision schemes, we recall some necessary definitions.
The convergence and smoothness of a subdivision scheme are linked with the sum rules of a mask $a\in \lp{0}$. For $J\in \NN$, we say that a mask $a$ has \emph{order $J$ sum rules with respect to a dilation factor $\dm$} if
\be \label{sr:2}
\sum_{k\in \Z} \pp(\gamma+\dm k) a(\gamma+\dm k)=\dm^{-1}
\sum_{k\in \Z} \pp(k) a(k),\qquad \forall\; \pp\in \Pi_{J-1}, \gamma\in \Z,
\ee
where $\Pi_{J-1}$ is the space of all polynomials of degree less than $J$.
For convenience, we define $\sr(a,\dm):=J$ with $J$ in \eqref{sr:2} being the largest such an integer.
Note that a polynomial sequence $\{\pp(k)\}_{k\in \Z}$ on $\Z$ can be uniquely identified with its underlying polynomial $\pp$ on the real line $\R$.

The convergence of a subdivision scheme can be characterized by a technical quantity $\sm_p(a,\dm)$, which is introduced in \cite{han03}. For a mask $a\in \lp{0}$ and $1\le p\le \infty$, we define (e.g., see \cite{han98,han03,han03smaa,hanbook})
\be \label{sm}
\sm_p(a,\dm):=\tfrac{1}{p}-\log_{\dm} \rho_J(a,\dm)_p
\quad \mbox{with}\quad \rho_J(a,\dm)_p:=\limsup_{n\to\infty} \| \nabla^{J}\sd_{a,\dm}^n \td\|_{\lp{p}}^{1/n}, \; J:=\sr(a,\dm).
\ee
It is known that an $\dm$-subdivision scheme with mask $a\in \lp{0}$ is $\mathscr{C}^m$-convergent if and only if $\sm_\infty(a,\dm)>m$ (e.g., see \cite[Theorem~4.3]{han03} or \cite[Theorem~2.1]{hj06}).
We shall discuss how to effectively compute and estimate the smoothness exponents $\sm_2(a,\dm)$ and $\sm_\infty(a,\dm)$ in Subsection~\ref{subsec:sm}.

As we shall see in \cref{thm:int}, an $\dm$-refinable function $\phi$ with a mask $a\in \lp{0}$ is $s_a$-interpolating if and only if its $\dm$-subdivision scheme is $\mathscr{C}^0$-convergent and $\infty$-step interpolatory. However, for special centers $s_a$, its subdivision scheme can be $n_s$-step (instead of $\infty$-step) interpolatory for some finite integer $n_s\in \N$ in the following sense:

\begin{definition}\label{def:nsstep}
{\rm For $n_s\in \N$,
we say that an $\dm$-subdivision scheme with a mask $a\in \lp{0}$ is \emph{$n_s$-step interpolatory} if
\be \label{cond:intsubdiv}
[\sd_{a,\dm}^{n_s} v](s+\dm^{n_s} k)=v(k),\qquad \forall\; k\in \Z, v\in \lp{}
\ee
for some shift $s\in \Z$. We often take $n_s\in \N$ to be the smallest integer such that \eqref{cond:intsubdiv} holds.
}\end{definition}

Using the definition of the subdivision operator in \eqref{sd}, we can directly deduce from \eqref{cond:intsubdiv} that
\[
[\sd_{a,\dm}^{q n_s} v]((I+\dm^{n_s}+
\cdots+\dm^{(q-1) n_s})s+\dm^{qn_s} k)=v(k),\qquad \forall\; k\in \Z, q\in \N, v\in \lp{}.
\]
Hence, the subdivision scheme in \cref{def:nsstep} interpolates the data after every $n_s$-step subdivision and the same subdivision scheme is obviously $qn_s$-step interpolatory with the shift
$(I+\dm^{n_s}+
\cdots+\dm^{(q-1) n_s})s$.
Note that a standard interpolatory subdivision scheme is simply $1$-step interpolatory with the shift $s=0$ and $n_s=1$ in \cref{def:nsstep}.
Moreover, if an $n_s$-step interpolatory $\dm$-subdivision scheme with a mask $a\in \lp{0}$ in \cref{def:nsstep} is $\mathscr{C}^0$-convergent, then by \eqref{converg} and \eqref{cond:intsubdiv}, the $\dm$-subdivision scheme must be also $\infty$-step interpolatory with the center $s_a:=(\dm^{n_s}-1)^{-1} s$ and its $\dm$-refinable function $\phi$ must be $s_a$-interpolating.

For special choices of $s_a\in \R$, the following result, whose proof is given in \cref{sec:proof},
characterizes all
$s_a$-interpolating $\dm$-refinable functions and their $\mathscr{C}^m$-convergent $\infty$-step interpolatory $\dm$-subdivision schemes in terms of their masks.

\begin{theorem}\label{thm:int}
Let $\dm\in \N\bs\{1\}$ be a dilation factor. Let $m\in \NN$ and $a\in \lp{0}$ be a finitely supported mask with $\sum_{k\in \Z} a(k)=1$. Define a compactly supported distribution $\phi$ by $\wh{\phi}(\xi):=\prod_{j=1}^\infty \tilde{\pa}(e^{-i\dm^{-j}\xi})$ for $\xi \in \R$.
For a real number $s_a\in \R$ satisfying
\be \label{cond:sa}
\dm^{m_s}(\dm^{n_s}-1)s_a\in \Z \quad\mbox{for some $m_s\in \NN$ and $n_s\in \N$},
\ee
the following statements are equivalent to each other:
\begin{enumerate}
\item[(1)] The $\dm$-refinable function $\phi$ with mask $a$ belongs to $\CH{m}$ and is $s_a$-interpolating as in \eqref{intphi:sa}.
\item[(2)] $\sm_\infty(a,\dm)>m$ and there is a finitely supported sequence $w\in \lp{0}$ such that
\begin{align}
&[A_{m_s}*w](\dm^{m_s} k)=\dm^{-m_s} \td(k)\qquad \forall\; k\in \Z, \label{cond:ms}\\
&[A_{n_s}*w](\dm^{m_s}(\dm^{n_s}-1)s_a+\dm^{n_s}k)=\dm^{-n_s} w(k),\qquad \forall\; k\in \Z,\label{cond:ns}
\end{align}
where the finitely supported masks $A_n\in \lp{0}$ are defined to be
\be \label{An}
A_n:=\dm^{-n} \sd_{a,\dm}^n \td, \quad \mbox{or equivalently},\quad \widetilde{\mathbf{A}_n}(z):=\tilde{\pa}(z^{\dm^{n-1}})
\tilde{\pa}(z^{\dm^{n-2}})\cdots \tilde{\pa}(z^\dm) \tilde{\pa}(z).
\ee
For the particular case $m_s=0$, the conditions in \eqref{cond:ms} and \eqref{cond:ns} together are equivalent to
\be \label{cond:ms=0}
A_{n_s}((\dm^{n_s}-1)s_a+\dm^{n_s}k)=\dm^{-n_s} \td(k)\qquad \forall\; k\in \Z,
\ee
because $w=\td$ is the unique solution to \eqref{cond:ms} with $m_s=0$ due to $A_0=\td$ and $\td*w=w$.

\item[(3)] The $\dm$-subdivision scheme with mask $a$ is $\mathscr{C}^m$-convergent and $\infty$-step interpolatory with the center $s_a$ as in the sense of \cref{def:sd}. For the particular case $m_s=0$, the $\dm$-subdivision scheme with mask $a$ is further $n_s$-step interpolatory with the integer shift $(\dm^{n_s}-1)s_a$ as in the sense of \cref{def:nsstep}.
\end{enumerate}
Moreover, any of the above items (1)--(3) implies that the $\dm$-subdivision scheme with mask $a$ has the following polynomial-interpolation property:
\be \label{poly:int}
\sd_{a,\dm}^n \pp=\pp(\dm^{-n}(s_a+\cdot)-s_a),\qquad \forall\; n\in \N, \pp\in \Pi_{\sr(a,\dm)-1}.
\ee
\end{theorem}

The set of all $s_a\in \R$ satisfying \eqref{cond:sa} is $\cup_{m_s=0}^\infty \cup_{n_s=1}^\infty [\dm^{-m_s}(\dm^{n_s}-1)^{-1}\Z]$, which is dense in $\R$. Moreover, $s_a \in \R$ satisfies \eqref{cond:sa} if and only if $[0,1)\cap (\cup_{j=0}^\infty [\dm^j s_a+\Z])$ is a finite set. We shall explain in Subsection~\ref{subsec:sa} the condition \eqref{cond:sa} on $s_a$ in details, which is rooted in the fundamental problem of how to determine the exact (not approximated) value $\phi(s_a)$ of a continuous $\dm$-refinable function $\phi$ (not necessarily interpolating) within finitely many steps using only its mask $a\in \lp{0}$.

For many applications such as curve/surface generation in CAGD and wavelet methods for numerical PDEs and image processing, $d$-dimensional refinable functions $\phi$ with the dilation matrix $2I_d$ are highly desired to possess high smoothness (e.g., $\phi\in \mathscr{C}^2(\mathbb{R}^d)$ in CAGD for continuity of the curvature of subdivision curves or surfaces), interpolation property (e.g., interpolating curves/functions in CAGD and numerical PDEs), and masks of small supports (for fast implementation and boundary treatment in applications, e.g., see \cite{hanbook,hm21}).
However, these highly desired properties of $\phi$ are mutually conflicting to each other. For example,
\cite[Theorem~3.5 and Corollary~4.3]{han99} shows that there are no standard interpolating $2I_d$-refinable functions $\phi\in \mathscr{C}^2(\R^d)$ whose masks can be supported inside $[-3,3]^d$. Consequently, it is impossible to have $\mathscr{C}^2$-convergent (dyadic) $2I_d$-subdivision schemes with two-ring stencils.
Motivated by the $n_s$-step interpolatory stationary subdivision schemes in \cref{thm:int} and \cite{han99},
we shall show that this can be remedied by introducing the notion of $r$-mask quasi-stationary subdivision schemes.

Let $r\in \N$ and $a_1,\ldots, a_{r}\in \lp{0}$ be finitely supported masks. For $n\in \N$, we define
\be \label{sd:new}
\sd_{a_1,\ldots, a_r,\dm}^{n, r}:=
\begin{cases} [\sd_{a_r,\dm}\cdots \sd_{a_1,\dm}]^{\lfloor n/r\rfloor}, &\text{if $n\in r\N$,}\\
\sd_{a_{\{n\}},\dm}\sd_{a_{\{n\}-1},\dm}\cdots \sd_{a_1,\dm}
[\sd_{a_r,\dm}\cdots \sd_{a_1,\dm}]^{\lfloor n/r\rfloor},
&\text{if $n\not\in r\N$,}
\end{cases}
\ee
where $\lfloor x\rfloor$ is the largest integer not greater than $x$ and $\{n\}:=n-r \lfloor n/r\rfloor\in \{0,\ldots,r-1\}$.
For any initial input sequence $v\in \lp{}$, we obtain a sequence $\{\sd_{a_1,\ldots, a_r,\dm}^{n, r} v\}_{n=1}^\infty$ of $\dm$-subdivision data. In other words, we apply the $\dm$-subdivision operators on the initial data $v\in \lp{}$ using the masks $\{a_1,\ldots,a_r\}$ in the $r$-periodic ordering fashion $a_1,\ldots,a_r, a_1,\ldots,a_r,\ldots$.
Therefore, such a subdivision scheme using masks $\{a_1,\ldots,a_r\}$ will be called an \emph{$r$-mask quasi-stationary} subdivision scheme.

Similar to \cref{def:sd}, we have

\begin{definition}\label{def:sd:qs}
{\rm Let $\dm\in \N\bs\{1\}$ be a dilation factor and $r\in \N$. Let $m\in \NN$ and $a_1,\ldots,a_r\in \lp{0}$ be finitely supported masks satisfying $\sum_{k\in \Z} a_\ell(k)=1$ for $\ell=1,\ldots,r$. We say that \emph{the $r$-mask quasi-stationary $\dm$-subdivision scheme with masks $\{a_1,\ldots,a_r\}$ is $\mathscr{C}^m$-convergent} if for every initial input sequence $v\in \lp{}$, there exists a function $\eta_v\in \CH{m}$ such that for every constant $K>0$,
\be \label{converg:qs}
\lim_{n\to\infty} \max_{k\in \Z\cap [-\dm^n K, \dm^n K]} |\dm^{jn} [\nabla^j \sd_{a_1,\ldots, a_r,\dm}^{n, r} v](k)-\eta_v^{(j)}(\dm^{-n} k)|=0, \qquad \mbox{for all } j=0,\ldots,m.
\ee
%
}\end{definition}

Obviously, \cref{def:sd:qs} with $r=1$ becomes \cref{def:sd}. We now characterize the $\mathscr{C}^m$-convergent quasi-stationary subdivision schemes in the following result, whose proof is presented in \cref{sec:qs}.

\begin{theorem}\label{thm:qs}
Let $\dm\in \N\bs\{1\}$ be a dilation factor and $r\in \N$. Let $m\in \NN$ and $a_1,\ldots,a_r\in \lp{0}$ be finitely supported masks with $\sum_{k\in \Z} a_\ell(k)=1$ for $\ell=1,\ldots,r$.
Define a mask $a\in \lp{0}$ by
\be \label{mask:a1r}
a:=\dm^{-r} \sd_{a_r,\dm}\cdots \sd_{a_1,\dm}\td,\quad \mbox{that is},\quad
\tilde{\pa}(z):=\widetilde{\pa_1}(z^{\dm^{r-1}})\cdots \widetilde{\pa_{r-1}}(z^\dm)\widetilde{\pa_r}(z).
\ee
Define the compactly supported $\dm^r$-refinable function/distribution $\phi$ via the Fourier transform $\wh{\phi}(\xi):=\prod_{j=1}^\infty \tilde{\pa}(e^{-i \dm^{-r j}\xi})$ for $\xi\in \R$. Note that $\wh{\phi}(0)=1$. Then
the $r$-mask quasi-stationary $\dm$-subdivision scheme with masks $\{a_1,\ldots,a_r\}$ is $\mathscr{C}^m$-convergent if and only if
\be \label{cond:qs}
\sm_\infty(a,\dm^r)>m
\quad \mbox{and}\quad
\sr(a_\ell,\dm)>m,\qquad \forall\; \ell=1,\ldots,r.
\ee
Moreover, for every $v\in \lp{}$, the limit function $\eta_v$ in \eqref{converg:qs} of \cref{def:sd:qs} must be given by $\eta_v=\sum_{k\in \Z} v(k)\phi(\cdot-k)$. 
\end{theorem}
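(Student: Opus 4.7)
\medskip
\noindent\textbf{Proof proposal for Theorem~\ref{thm:qs}.}
The plan is to reduce the $r$-periodic quasi-stationary scheme to a genuine stationary scheme with dilation $\dm^r$ and mask $a$, and then handle the intermediate iterates via the sum rule assumption on each $a_\ell$. The key structural identity is
\[
\sd_{a_r,\dm}\cdots \sd_{a_1,\dm}=\sd_{a,\dm^r},
\]
which I would verify by a direct symbol computation: the symbol of $\sd_{a,\dm}v$ is $\dm\tilde{\pa}(z)\tilde{\pv}(z^\dm)$, so iterating $r$ times with the masks $a_1,\ldots,a_r$ produces the symbol $\dm^r\widetilde{\pa_1}(z^{\dm^{r-1}})\cdots\widetilde{\pa_r}(z)\tilde{\pv}(z^{\dm^r})$, matching \eqref{mask:a1r}. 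In particular, for $n=qr$ one has $\sd_{a_1,\ldots,a_r,\dm}^{n,r}v=\sd_{a,\dm^r}^qv$, so along this subsequence \cref{def:sd:qs} collapses to \cref{def:sd} for the stationary $\dm^r$-scheme with mask $a$. By the characterization quoted after \eqref{sminfty} (applied with dilation $\dm^r$), the subsequential convergence is equivalent to $\sm_\infty(a,\dm^r)>m$, and it identifies the subsequential limit as $\eta_v=\sum_{k\in\Z}v(k)\phi(\cdot-k)$ with $\phi$ the $\dm^r$-refinable basis function.

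For sufficiency of \eqref{cond:qs}, I would extend the subsequential convergence to every $n$. Write $n=qr+s$ with $0\le s<r$, set $u_q:=\sd_{a,\dm^r}^q v$, and use the standard commutation relation
\[
\nabla^{J}\sd_{a_\ell,\dm}=\sd_{b_\ell,\dm}\nabla^{J},\qquad \widetilde{\pa_\ell}(z)=(1+z+\cdots+z^{\dm-1})^{J}\widetilde{\pb_\ell}(z),
\]
which is valid for every $J\le \sr(a_\ell,\dm)$ and in particular for every $J=j\le m$ by \eqref{cond:qs}. Applying this step by step through $s$ masks gives a finite composition of subdivision operators $\sd_{b_s^{(j)},\dm}\cdots\sd_{b_1^{(j)},\dm}$ acting on $\nabla^j u_q$. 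Since this composition is a fixed bounded operator on $\lp{\infty}$ independent of $n$, and since the sum rules of order $>m\ge j$ ensure that each $\sd_{a_\ell,\dm}$ reproduces polynomials of degree $\le m$ in the sense of \eqref{sr:2}, standard Taylor expansion of $\eta_v^{(j)}$ on the $\dm^{-qr}$-grid shows that the shift from $\dm^{-qr}\Z$ to $\dm^{-(qr+s)}\Z$ only introduces errors that vanish uniformly on compact sets of $k/\dm^n$. This gives \eqref{converg:qs} for all $j=0,\ldots,m$ and all $n$.

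For necessity, the condition $\sm_\infty(a,\dm^r)>m$ follows immediately by restricting \eqref{converg:qs} to the subsequence $n=qr$, which is exactly $\mathscr{C}^m$-convergence of the stationary $\dm^r$-scheme with mask $a$. To force $\sr(a_\ell,\dm)>m$ for every $\ell$, I would argue by shift invariance: for each fixed $\ell\in\{0,\ldots,r-1\}$, the scheme obtained by cyclically starting at $a_{\ell+1}$ is again an $r$-step quasi-stationary $\dm$-subdivision scheme, and its $\mathscr{C}^m$-convergence follows from that of the original scheme because the two schemes differ only by the fixed operator $\sd_{a_\ell,\dm}\cdots\sd_{a_1,\dm}$. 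Denoting by $a^{(\ell)}$ the corresponding cyclic permutation analogue of \eqref{mask:a1r}, one gets $\sm_\infty(a^{(\ell)},\dm^r)>m$; since $\sm_\infty$ controls $\sr$ (convergence of the $\dm^r$-scheme implies $\sr(a^{(\ell)},\dm^r)>m$), and the factored form of $a^{(\ell)}$ as a product of $\widetilde{\pa_j}(z^{\dm^{k}})$'s forces each $\widetilde{\pa_\ell}$ to absorb the factor $(1+z+\cdots+z^{\dm-1})^{m+1}$, one recovers $\sr(a_\ell,\dm)>m$.

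The main obstacle is the intermediate-step bookkeeping in the sufficiency direction: the quantitative error estimate connecting $\nabla^j u_q$ to $\nabla^j\sd_{a_s,\dm}\cdots\sd_{a_1,\dm}u_q$ after the smooth rescaling by $\dm^{jn}$. The combinatorial substance is entirely contained in the commutation relation above together with the polynomial reproduction \eqref{sr:2}; once these are invoked uniformly in $s\in\{0,\ldots,r-1\}$, the remainder is a routine triangle-inequality comparison between $\eta_v^{(j)}(\dm^{-n}k)$ and samples of $\eta_v^{(j)}$ on the coarser grid $\dm^{-qr}\Z$, which is controlled by the continuity modulus of $\eta_v^{(j)}$.
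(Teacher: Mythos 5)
Your reduction to the stationary $\dm^r$-scheme and your sufficiency argument follow essentially the same route as the paper: commute $\nabla^j$ past the intermediate operators using the factorization of each $\widetilde{\pa_\ell}$ (the paper does this in one shot for the composite $\dm^\ell$-mask, you do it mask by mask, which yields the same quotient mask), bound the propagated error by the $\lp{\infty}\to\lp{\infty}$ norm of the resulting fixed operator, and control the consistency error between the grids $\dm^{-qr}\Z$ and $\dm^{-(qr+s)}\Z$ by the modulus of continuity of $\eta_v^{(j)}$; for that last step only the surviving order-one sum rule of the quotient masks (equal coset sums) is needed, not full polynomial reproduction. The necessity of $\sm_\infty(a,\dm^r)>m$ via the subsequence $n=qr$ is likewise identical to the paper.

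Where you genuinely diverge is the necessity of $\sr(a_\ell,\dm)>m$. The paper argues by contradiction: if $j:=\sr(a_\ell,\dm)\le m$, it extracts a coset of the quotient mask $b_\ell$ whose sum deviates from $\dm^{-j-1}$ and concludes that $\phi^{(j)}$ vanishes on a dense set, hence $\phi\equiv 0$, contradicting $\wh{\phi}(0)=1$. Your route via cyclic shifts is viable but contains a concrete gap as written: divisibility of the product $\widetilde{\pa_{\ell+1}}(z^{\dm^{r-1}})\cdots\widetilde{\pa_\ell}(z)$ by $(1+z+\cdots+z^{\dm^r-1})^{m+1}$ does \emph{not} force each factor to absorb $(1+z+\cdots+z^{\dm-1})^{m+1}$. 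For instance, with $\dm=r=2$ and $m=0$, taking $\widetilde{\pa_1}(z)=1$ and $\widetilde{\pa_2}(z)=\tfrac{1}{4}z^{-1}(1+z)(1+z^2)$ makes the product divisible by $(1+z)(1+z^2)$ even though $\sr(a_1,2)=0$. What is true, and what rescues your argument, is that at each $\dm$-th root of unity $\omega\neq 1$ every factor $\widetilde{\pa_i}(z^{\dm^k})$ with $k\ge 1$ equals $\widetilde{\pa_i}(1)=1\neq 0$, so the order of the zero of the product at $\omega$ equals that of the single plain-$z$ factor; hence each cyclic shift certifies the sum rules of exactly one mask (the last-applied one), and you need all $r$ shifts, each with its own convergence-transfer and level/grid bookkeeping (note $(\sd_{a^{(\ell)},\dm^r})^q v$ is the original scheme at level $r(q-1)+\ell$ applied to $\sd_{a_r,\dm}\cdots\sd_{a_{\ell+1},\dm}v$, so the limit is a fixed dilate of $\eta_w$). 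With those two repairs your necessity argument closes and is arguably more algebraic than the paper's analytic contradiction; but as submitted the factorization claim is false in the generality in which you state it.
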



The contributions and potential usefulness of the results in \cref{thm:int,thm:qs} are outlined below:

\begin{enumerate}
\item[(1)] We introduce the notion of $r$-mask quasi-stationary subdivision schemes and fully characterize them in \cref{thm:qs}.
    This notion and examples in Section~\ref{sec:alg} offer new $n_s$-step interpolatory $2$-subdivision schemes for CAGD and new interpolating refinable functions for numerical PDEs. In particular, for $r=2,3$, we obtain $\mathscr{C}^r$-convergent $r$-step interpolatory $r$-mask quasi-stationary dyadic subdivision schemes using only two-ring stencils in \cref{ex2,ex3}. Their tensor products obviously offer $d$-dimensional $\mathscr{C}^r$-convergent $r$-step interpolatory $r$-mask quasi-stationary dyadic $2I_d$-subdivision schemes using only two-ring stencils.

\item[(2)] We introduce the notion of $s_a$-interpolating refinable functions and characterize them in \cref{thm:int}, leading to $n_s$-step interpolatory subdivision schemes with $n_s\in \N\cup\{\infty\}$. \cref{ex:M2} shows the existence of $\frac{1}{3}$-interpolating $2$-refinable functions $\phi\in \mathscr{C}^1(\R)$ and their $\mathscr{C}^1$-convergent $2$-step interpolatory dyadic subdivision schemes. For $\dm=3,4$, \cref{ex:M3,ex:M4} obtain several interpolatory dual $\dm$-subdivision schemes with symmetric masks such that their $n_s$-interpolatory $\dm$-subdivision schemes are $\mathscr{C}^2$-convergent with $n_s\in \{2,\infty\}$.
\item[(3)] For masks symmetric about $c_a/2$, interpolatory dual $\dm$-subdivision schemes with $\dm>2$ have been studied in the interesting papers \cite{grv22,rv20,rom19,v23}. Because the basis functions there must be $s_a$-interpolating with $s_a=\frac{c_a}{2(\dm-1)}$ (see details after \cref{prop:lpm}),
    the necessary and sufficient \cref{thm:int} can be applied to this special case and only uses masks $a$ without requiring symmetry.
    While \cite[Theorems~3.4 and 3.5]{rv20} involves both masks $a$ and values of $\phi$ on $\frac{n}{T}+\Z$ in \cite[(11)]{rv20}, which is further addressed in
     \cite[Assumptions~1 and~2]{grv22}.
     Interestingly, we construct in \cref{ex:M2:M4} a $\mathscr{C}^2$-convergent $\infty$-interpolatory $2$-mask quasi-stationary dyadic $2$-subdivision scheme with masks $\{a_1,a_2\}$, which leads to a symmetric $\frac{1}{6}$-interpolating $\dm$-refinable function and a $\mathscr{C}^2$-convergent interpolatory dual $\dm$-subdivision scheme with $\dm=4$.

\item[(4)] \cref{thm:int,thm:qs}, which can be combined as in \cref{cor:qs}, offer new interpolating refinable functions (e.g., see \cref{ex3,ex:M2:M4}) and wavelets which are of interest in their applications to numerical PDEs and image processing. Interestingly, the notion of $r$-mask quasi-stationary subdivision schemes in \cref{thm:qs} offers a flexible framework for constructing non-traditional wavelets with added features, for which we shall leave it as a future research problem.
\end{enumerate}

The structure of the paper is as follows.
In \cref{sec:alg}, we provide several examples and construction procedures of quasi-stationary $2$-subdivision schemes and $n_s$-step interpolatory $\dm$-subdivision schemes. We also discuss how to estimate the smoothness exponent $\sm_\infty(a,\dm)$ and explain in details the condition \eqref{cond:sa} and roles on $s_a$.
In \cref{sec:proof}, we first develop some auxiliary results and then we prove \cref{thm:int}. In \cref{sec:qs}, we shall prove \cref{thm:qs} and then we shall present a result in \cref{cor:qs} by combining both \cref{thm:int,thm:qs} for $rn_s$-step interpolatory $r$-mask quasi-stationary $\dm$-subdivision schemes with masks $\{a_1,\ldots,a_r\}$.

\section{Examples of $n_s$-step Interpolatory (Quasi)-Stationary Subdivision Schemes}
\label{sec:alg}

Applying \cref{thm:int,thm:qs}, we discuss how to construct desired masks $a$ for $s_a$-interpolating refinable functions and their $n_s$-step interpolatory subdivision schemes. We first discuss how to estimate the smoothness exponent $\sm_\infty(a,\dm)$ and then explain the condition \eqref{cond:sa} on $s_a$.
Then we present some examples of convergent $r$-step interpolatory $r$-mask quasi-stationary subdivision schemes using \cref{thm:int,thm:qs} with the commonly used dilation factor $\dm=2$. Next, we provide
construction procedures and
several examples of masks for $s_a$-interpolating $\dm$-refinable functions using \cref{thm:int}. Finally, we apply our constructed $n_s$-step interpolatory subdivision schemes to CAGD for generating smooth subdivision curves and we explain the roles of $s_a$ in CAGD.

To present our examples and discuss their construction, for a mask $a\in \lp{0}$, we define the filter support $\fs(a)$ to be the smallest interval $[l_a,h_a]$ with $l_a,h_a\in \Z$ such that
$a(l_a)a(h_a)\ne 0$ and
$a(k)=0$ for all $k\in \Z\bs [l_a,h_a]$.
Then its $\dm$-refinable function $\phi$ must be supported inside $[\frac{l_a}{\dm-1},\frac{h_a}{\dm-1}]$.

\subsection{Estimate and optimize the smoothness quantity $\sm_\infty(a,\dm)$}
\label{subsec:sm}

To construct desired masks in \cref{thm:int,thm:qs} for smooth interpolating refinable functions, we first discuss how to calculate and estimate the smoothness exponent $\sm_\infty(a,\dm)$ in \eqref{sm}. Because masks $a$ constructed in \cref{thm:int,thm:qs} often have
some free parameters, we shall discuss how to search among these free parameters in the masks $a$ such that the smoothness exponent $\sm_\infty(a,\dm)$ is as large as possible.

The smoothness exponent $\sm_p(a,\dm)$ defined in \eqref{sm} for $1\le p\le \infty$ plays a critical role in studying subdivision schemes and wavelets. In CAGD, an $\dm$-subdivision scheme with mask $a\in \lp{0}$ is $\mathscr{C}^m$-convergent if and only if $\sm_\infty(a,\dm)>m$ (e.g., see \cite[Theorem~2.1]{hj06} or \cite[Theorem~7.3.1]{hanbook}).
Even for arbitrary matrix masks $a\in (\lp{0})^{r\times r}$, the vector $\dm$-subdivision scheme with mask $a$ is $\mathscr{C}^m$-convergent if and only if $\sm_\infty(a,\dm)>m$ (e.g., see \cite[Theorem~1]{han24}). Moreover, the convergence rate of the vector subdivision scheme is also determined by $\sm_\infty(a,\dm)$, e.g., see \cite[Theorem~2]{han24}.
To study refinable functions in wavelet analysis, recall that the cascade operator $\cd_{a,\dm}: L_p(\R)\rightarrow L_p(\R)$ is defined to be $\cd_{a,\dm} f:=\dm \sum_{k\in \Z} a(k)f(\dm\cdot-k)$. Let $\phi$ be the $\dm$-refinable function with a mask $a$. Then $\phi$ is a fixed point of $\cd_{a,\dm}$, i.e., $\cd_{a,\dm} \phi=\phi$.
From the refinement equation \eqref{refeq}, one can easily see that $\cd_{a,\dm}^n f=\sum_{k\in \Z} [\sd_{a,\dm}^n \td](k)f(\dm^n\cdot-k)$, i.e.,
a cascade algorithm is closely linked to a subdivision scheme for studying the convergence of the cascade algorithm $\{\cd_{a,\dm}^n f\}_{n=1}^\infty$ in the Sobolev space $W^m_p(\R)$ (e.g., see \cite{han03,hanbook,hj98}).
Then a cascade algorithm with mask $a$ converges in $W^m_p(\R)$ if and only if $\sm_p(a,\dm)>m$ (e.g., see \cite[Theorem~4.3]{han03} or \cite[Theorem~5.6.16]{hanbook}).  The $L_p$-smoothness exponent $\sm_p(\phi)$ is defined later in \eqref{sm:f}. Then $\sm_p(\phi)\ge \sm_p(a,\dm)$. Moreover, $\sm_p(\phi)=\sm_p(a,\dm)$ holds if the integer shifts of $\phi$ are stable, i.e., $\mbox{span}\{\wh{\phi}(\xi+2\pi k)\setsp k\in \Z\}=\C$ for every $\xi\in \R$.

Generally, computing $\sm_\infty(a,\dm)$ is not an easy task, but we can often estimate $\sm_\infty(a,\dm)$. 
Let $a\in \lp{0}$ be a finitely supported mask. Define $J:=\sr(a,\dm)$, the highest order sum rules of the mask $a$ with respect to a dilation factor $\dm$.
Then we can write
\be \label{sr}
\tilde{\pa}(z)=(1+z+\cdots+z^{\dm-1})^J \tilde{\pb}(z)
\qquad \mbox{for some sequence } b\in \lp{0}.
\ee
It is well known that a mask $a\in \lp{0}$ has order $J$ sum rules as defined in \eqref{sr:2} if and only if \eqref{sr} holds (e.g., see \cite[Theorem 3.5]{han13} or \cite[Theorem~1.2.5]{hanbook}). Recall that the quantity $\rho_J(a,\dm)_p$ is defined in \eqref{sm}. Then we must have
\be \label{smabJ}
\rho_J(a,\dm)_p=\rho_0(b,\dm)_p:=\limsup_{n\to\infty} \|\sd_{b,\dm}^n \td\|_{\lp{p}}^{1/n}
\ee
(e.g., see \cite[Theorem~2.1]{han98} and \cref{lem:eig}) and by \cite[Corollary~5.8.5]{hanbook} and \cite[Corollary~2.2]{han98},
\be \label{sminfty}
\sm_\infty(a,\dm)=-\log_\dm \rho_0(b,\dm)_\infty \quad \mbox{and}\quad
\rho_0(b,\dm)_\infty=\inf_{n\in \N} \sup_{\gamma=0,\ldots, \dm^n-1} \Big[\sum_{k\in \Z} |[\sd_{b,\dm}^n\td](\gamma+\dm^n k)|\Big]^{\frac{1}{n}}.
\ee
In particular, for every $n\in \N$, we obviously have the following lower bounds of $\sm_\infty(a,\dm)$:
\be \label{sm:est:n}
\sm_\infty(a,\dm)=-\log_{\dm} \rho_0(b,\dm)_\infty
\ge -\log_{\dm}
\Big(\sup_{\gamma=0,\ldots, \dm^n-1} \Big[\sum_{k\in \Z} |[\sd_{b,\dm}^n\td](\gamma+\dm^n k)|\Big]^{\frac{1}{n}}\Big).
\ee
If there exists $\gamma_0\in \Z$ such that
\be \label{sm:special}
b(\gamma_0+\dm k)=0 \quad \forall\; k\in \Z\bs\{0\}
\quad \mbox{and}\quad
\sum_{k\in \Z} |b(\gamma+\dm k)|\le |b(\gamma_0)|,\qquad \forall\; \gamma=0,\ldots,\dm-1,
\ee
then $\rho_0(b,\dm)_\infty=\dm |b(\gamma_0)|$ by \cite[Corollary~2.2]{han98}, and hence, $\sm_\infty(a,\dm)=-1-\log_\dm |b(\gamma_0)|$.
Otherwise, we often have to take large integers $n$ in \eqref{sm:est:n} to obtain accurate low bounds of $\sm_\infty(a,\dm)$.

Fortunately, for the special case $p=2$, the quantities $\sm_2(a,\dm)$ and $\rho_0(b,\dm)_2$ can be effectively computed by finding the spectral radius of some special finite matrix $B$.
Because $b$ is finitely supported, we define  $[l_b,h_b]:=\fs(b)$ to be the filter support of $b$.
Define a sequence $c\in \lp{0}$ by $c(j):=\sum_{k=l_b}^{h_b} b(j+k) \overline{b(k)}$ for $j\in \Z$. That is, $\tilde{\pc}(e^{-i\xi})=|\tilde{\pb}(e^{-i\xi})|^2$ for $\xi\in \R$.
Then $\fs(c)=[l_b-h_b,h_b-l_b]$. By \cite[Theorem~2.1]{han98} or \cite[Corollary~5.8.5]{hanbook}, we have $\rho_0(b,\dm)_2=\dm \sqrt{\rho(B)}$ and
\be \label{sm2}
\sm_2(a,\dm)=-\tfrac{1}{2}-\tfrac{1}{2}\log_{\dm} \rho (B) \quad
 \quad{with}\quad
B:=(c(\dm k-j))_{-\lfloor \frac{h_b-l_b}{\dm-1}\rfloor\le j,k\le \lfloor \frac{h_b-l_b}{\dm-1}\rfloor},
\ee
where $\rho(B)$ is the spectral radius of the finite matrix $B$ and $\lfloor \frac{h_b-l_b}{\dm-1}\rfloor$ is the largest integer $\le \frac{h_b-l_b}{\dm-1}$.
Note that the sequence $c$ must be symmetric about the origin. Therefore, taking advantages of symmetry of the sequence $c$, we can further speed up the calculation of $\sm_2(a,\dm)$ by computing the spectral radius of a smaller matrix (roughly speaking, half size of the matrix $B$ in \eqref{sm2}), see \cite[Algorithm~2.1]{han03smaa}.
Moreover, the quantity $\sm_\infty(a,\dm)$ can be estimated from $\sm_2(a,\dm)$ by
\be \label{sm2:sminfty}
\sm_2(a,\dm)-\tfrac{1}{2}\le \sm_\infty(a,\dm)\le \sm_2(a,\dm).
\ee
%
We also refer to \cite[Corollary~5.8.5]{hanbook} for other ways of estimating the smoothness exponent $\sm_\infty(a,\dm)$.

In all our examples constructed through \cref{thm:int,thm:qs}, the masks $a\in \lp{0}$ often have several free parameters. Because we often have to solve nonlinear equations in \cref{thm:int,thm:qs}, in fact we often obtain several families of masks with  free parameters and complicated expressions.
Consequently, to find special values of the parameters such that the smoothness exponent $\sm_\infty(a,\dm)$ is as large as possible, we simply use a brute force method by locally searching for the highest possible smoothness $\sm_2(a,\dm)$ among the parameters until $\sm_2(a,\dm)$ achieves a local maximum value among such parameters. Due to \eqref{sm2:sminfty}, such smoothness exponent $\sm_\infty(a,\dm)$ at the special parameter values is nearly the highest among all values of the parameters. Directly minimizing the spectral radius $\rho(B)$ among the parameters of masks $a$ is difficult, because the masks $a$ obtained by \cref{thm:int,thm:qs} often have complicated structure and many parameters. For relatively simple masks with free parameters, this issue has been addressed in \cite[Section~4]{hoy03} for constructing smooth bivariate Hermite subdivision schemes aided by spectral radius optimization.

\subsection{The condition \eqref{cond:sa} on $s_a$ in \cref{thm:int} for $s_a$-interpolating refinable functions}
\label{subsec:sa}


At first glance, the condition \eqref{cond:sa} in \cref{thm:int} may appear to be artificial and complicated to the readers. But \eqref{cond:sa} is in fact rooted in the fundamental problem of how to determine the exact (not approximated) value $\phi(s_a)$ of a general continuous $\dm$-refinable function $\phi$ (not necessarily interpolating) within finitely many steps using only its mask $a\in \lp{0}$. As we mentioned before, except spline refinable functions, an $\dm$-refinable function $\phi$ with a mask $a\in \lp{0}$ cannot have any analytic expression (e.g., see \cite[Chapter~6.1]{hanbook}) and is only theoretically defined through the Fourier transform by
$\wh{\phi}(\xi):=\prod_{j=1}^\infty \tilde{\pa}(e^{-i \dm^{-j}\xi})$ for $\xi \in \R$, or equivalently, $\phi$ is the unique latent solution to the refinement equation \eqref{refeq} under the normalization condition $\wh{\phi}(0)=1$.
Consequently, to satisfy the condition $\phi(s_a+k)=\td(k)$ for all $k\in \Z$  in \eqref{intphi:sa}, the exact values $\phi(s_a+k)$ for $k\in \Z$ must be able to be determined in finitely many steps from its mask $a\in \lp{0}$ through the refinement equation \eqref{refeq}.

Iterating the refinement equation \eqref{refeq}, one can easily deduce that
\be \label{phi:An}
\phi(x)=\sum_{k\in \Z} [\sd_{a,\dm}^n \td](k)\phi(\dm^n x-k)=\dm^n \sum_{k\in \Z} A_n(k)\phi(\dm^n x-k),\qquad n\in \N, x\in \R,
\ee
where $A_n$ is defined in \eqref{An}, i.e., $A_n:=\dm^{-n} \sd_{a,\dm}^n \td$.
Therefore, an $\dm$-refinable function $\phi$ with mask $a$ is also an $\dm^{n}$-refinable function with the mask $A_{n}$ for every $n\in \N$.

For any given $s_a\in \R$, without assuming that the continuous $\dm$-refinable function $\phi$ is interpolating, now we discuss how to determine the exact value $\phi(s_a)$ in finitely many steps by using the mask $a$.
Let $m_s\in \NN$.
If the exact values $\phi(\dm^{m_s}s_a+k)$ for all $k\in \Z$ are known, then \eqref{phi:An} with $n=m_s$ and $x=s_a$
uniquely determines $\phi(s_a)$ in finitely many steps through the mask $a$ (more precisely, $A_{m_s}$). For simplicity, we define $s:=\dm^{m_s} s_a$ with $m_s\in \NN$ and rewrite the refinement equation \eqref{refeq} as
\be \label{refeq:2}
\phi(\dm^{-1}x)=\dm \sum_{k\in \Z} a(k) \phi(x-k),\qquad x\in \R.
\ee
If the exact values of $\phi$ on $s+\Z$ are known, then the refinement equation \eqref{refeq:2} uniquely determines the values of $\phi$ on $\dm^{-1} s+\dm^{-1} \Z$. Repeating the same argument, we deduce that
the exact values of $\phi$ on
$\dm^{-n}s+\dm^{-n}\Z$
are uniquely determined by \eqref{refeq:2} for all $n\in \Z$.
Hence, we have two cases:

Case 1: $s \in \dm^{-n}s+\dm^{-n}\Z$ with $s:=\dm^{m_s}s_a$ for some $n=n_s\in \N$ and some $m_s\in \NN$. Then we must have $(\dm^{n_s}-1)s\in \Z$. Consequently, we obtain the condition \eqref{cond:sa}:
\[
\dm^{m_s} (\dm^{n_s}-1)s_a= (\dm^{n_s}-1)s\in \Z.
\]
In this case, we have $[s+\Z] \subseteq [\dm^{-n_s} s+\dm^{-n_s}\Z]$. Consequently, the exact values of $\phi$ on $s+\Z$
are determined by the values of $\phi$ on $\dm^{-n_s} s+\dm^{-n_s}\Z$, which are determined in turn by the values of $\phi$ on $s+\Z$ through \eqref{phi:An} with $n=n_s$ and $x\in \dm^{-n_s} s+\dm^{-n_s}\Z$.
Therefore, if the finitely supported mask $a$ is known, then the exact values of $\phi$ on $s+\Z$ can be uniquely determined by finitely many equations plus the normalization condition $\sum_{k\in \Z} \phi(s+k)=1$. Consequently, because $s=\dm^{m_s} s_a$, the values $\phi(s_a+k)$ for all $k\in \Z$ are uniquely determined by \eqref{phi:An} with $n=m_s$ and $x\in s_a+\Z$. To have a necessary condition for $\phi$ to be $s_a$-interpolating,
the above argument and the condition \eqref{cond:sa} eventually lead to the key nonlinear equations \eqref{cond:ms} and \eqref{cond:ns} in \cref{thm:int}.
Note that $s_a\in \R$ satisfies \eqref{cond:sa} if and only if $s_a\in \cup_{m_s=0}^\infty \cup_{n_s=1}^\infty [\dm^{-m_s}(\dm^{n_s}-1)^{-1}\Z]$, which is dense in $\R$.

Next we claim that $s_a \in \R$ satisfies \eqref{cond:sa} if and only if $[0,1)\cap (\cup_{j=0}^\infty [\dm^j s_a+\Z])$ is a finite set. If $s_a$ satisfies \eqref{cond:sa}, then $s_a\in \dm^{-m_s} (\dm^{n_s}-1)^{-1}\Z$ for some $m_s\in \NN$ and $n_s\in \N$. Obviously, $[\dm^j s_a+\Z] \subseteq \dm^{-m_s} (\dm^{n_s}-1)^{-1}\Z$ for all $j\in \NN$. Because $[0,1)\cap \dm^{-m_s} (\dm^{n_s}-1)^{-1}\Z$ is obviously a finite set, we conclude that $[0,1)\cap (\cup_{j=0}^\infty [\dm^j s_a+\Z])$ must be a finite set.
Conversely, suppose that $T:=[0,1)\cap (\cup_{j=0}^\infty [\dm^j s_a+\Z])$ is a finite set. Then for each $j\in \NN$, there must exist unique $k_j\in \Z$ and $t_j\in T$ such that $\dm^j s_a+k_j=t_j\in T$.
Because $T$ is a finite set, there must exist $0\le j<\ell<\infty$ such that $t_j=t_\ell$. Hence $\dm^j s_a+k_j=t_j=t_\ell=\dm^\ell s_a+k_\ell$, from which we have $\dm^j (\dm^{\ell-j}-1)s_a=\dm^\ell s_a-\dm^j s_a=k_j-k_\ell\in \Z$. Therefore, \eqref{cond:sa} is satisfied with $m_s=j\in \NN$ and $n_s=\ell-j \in \N$.
Thanks to the fact that $[0,1)\cap (\cup_{j=1}^\infty [\dm^j s_a+\Z])$ is a finite set, our argument for Case 1 shows that the exact value $\phi(s_a)$ can be obtained in finitely many steps by only using the mask $a$.

Case 2: $s\not \in \dm^{-n}s+\dm^{-n}\Z$ with $s:=\dm^{m_s} s_a$ for all $n\in \N$ and $m_s\in \NN$. Then
\eqref{cond:sa} on $s_a$ fails and the set $[0,1)\cap (\cup_{j=0}^\infty [\dm^j s_a+\Z])$ must be infinite. For this case, we are not aware of any known method for computing the exact value $\phi(s_a)$ of a continuous $\dm$-refinable function $\phi$ within finitely many steps from its mask or the existence of any $s_a$-interpolating refinable function when \eqref{cond:sa} fails.

Let $\phi$ be a continuous $\dm$-refinable function with a mask $a$ such that $\sm_\infty(a,\dm)>0$.
Without assuming that $\phi$ is interpolating,
we shall further discuss how to effectively compute $\phi(s_a)$ in finitely many steps by only using its mask $a$ at the end of \cref{sec:proof} for any $s_a\in \R$ satisfying \eqref{cond:sa}.
We shall also explain the rule of $s_a\in \R$ from the perspective of subdivision curves in CAGD in Subsection~\ref{subsec:app}.

\subsection{Examples of $r$-step interpolatory $r$-mask quasi-stationary dyadic subdivision schemes with symmetry}

The dilation factor $\dm=2$ is the most widely studied case in the literature.
Though it is highly desired to have $\mathscr{C}^2$-convergent dyadic subdivision schemes with masks having two-ring stencils, as discussed in \cref{sec:intro},
there are no standard interpolating $2I_d$-refinable functions $\phi\in \mathscr{C}^2(\R^d)$ and no $\mathscr{C}^2$-convergent interpolatory $2I_d$-subdivision schemes with masks having two-ring stencils (\cite[Corollary~4.3]{han99}).
Applying \cref{thm:int,thm:qs}, we now present examples to show that this shortcoming can be remedied by using $r$-step interpolatory $r$-mask quasi-stationary $2$-subdivision schemes with $r\in \{2,3\}$ and all symmetric masks $\{a_1,\ldots, a_r\}$ having at most two-ring stencils.

\begin{example}\label{ex1}
\rm Let $\dm=2$ and $r=2$.
Let $a_1, a_2\in \lp{0}$ be symmetric masks supported inside $[-2,2]$  with $c_a=0$ in \eqref{symmask} and $\sr(a_1,\dm)=\sr(a_2,\dm)=2$ as follows:
\begin{align*}
&\widetilde{\pa_1}(z)=\tfrac{1}{4}z^{-1}(1+z)^2(
t_1 z^{-1}+1-2t_1+t_1z),\\
&\widetilde{\pa_2}(z)=\tfrac{1}{4}z^{-1}(1+z)^2(t_2 z^{-1}+1-2t_2+t_2z),
\end{align*}
with $t_1,t_2\in \R$. Note that both masks $a_1$ and $a_2$ have only one-ring stencils: the even stencil $\{2a_\ell(-2), 2a_\ell(0), 2a_\ell(2)\}$, and the odd stencil $\{2a_\ell(-1), 2a_\ell(1)\}$ for $\ell=1,2$.
Define a new mask $a\in \lp{0}$ by $\tilde{\pa}(z):=\widetilde{\pa_1}(z^2)\widetilde{\pa_2}(z)$.
Solving the interpolation condition $a(4k)=0$ for all $k\in \Z\bs\{0\}$ (i.e., $a(-4)=a(4)=0$), we obtain $t_1=\frac{t_2}{2(t_2-1)}$ with $t_2\in \R\bs \{1\}$.
Optimizing the smoothness quantity $\sm_2(a,4)$ as described in Subsection~\ref{subsec:sm} among choices of the parameter $t_2$, we have $t_2=\frac{11}{32}$ (and hence $t_1=-\frac{11}{42}$) with $\sm_2(a,4)\approx 1.709055$. Explicitly,
\be\label{a1a2:r1}
a_1=\{-\tfrac{11}{168}, \tfrac{1}{4}, \tfrac{53}{84}, \tfrac{1}{4}, -\tfrac{11}{168}\}_{[-2,2]},\qquad
a_2=\{\tfrac{11}{128}, \tfrac{1}{4}, \tfrac{21}{64}, \tfrac{1}{4}, \tfrac{11}{128}\}_{[-2,2]}.
\ee
Note that the mask $a$ is supported inside $[-6,6]$ and its $4$-refinable function $\phi$ is supported inside $[-2,2]$. Because $\sm_\infty(a,4)\ge 1.512277$ using \eqref{sminfty} with $n=3$,
by \cref{thm:int,thm:qs} (also see \cref{cor:qs}), the $4$-refinable function $\phi\in \CH{1}$ is $0$-interpolating with $\phi(k)=\td(k)$ for all $k\in \Z$. Moreover, the  $2$-step interpolatory $2$-mask quasi-stationary $2$-subdivision scheme with masks $\{a_1,a_2\}$ is $\mathscr{C}^1$-convergent.
See \cref{fig:ex1} for the graph of the $0$-interpolating $4$-refinable function $\phi\in \CH{1}$ for the masks $\{a_1, a_2\}$ in \eqref{a1a2:r1}. Moreover, $\sm_2(a_1,2)\approx 0.860944$ and $\sm_2(a_2,2)\approx 1.989281$.
\end{example}

\begin{example}\label{ex2}
\rm Let $\dm=2$ and $r=2$.
Let $a_1, a_2\in \lp{0}$ be symmetric masks supported inside $[-4,4]$
with $c_a=0$ in \eqref{symmask}  and $\sr(a_1,\dm)=\sr(a_2,\dm)=4$ as follows:
\be \label{a1a2}
\begin{split}
&\widetilde{\pa_1}(z)=\tfrac{1}{16}z^{-2}(1+z)^4(t_2z^{-2}+t_1 z^{-1}+1-2t_1-2t_2+t_1z+t_2 z^2),\\
&\widetilde{\pa_2}(z)=\tfrac{1}{16}z^{-2}(1+z)^4(t_4z^{-2}+t_3 z^{-1}+1-2t_3-2t_4+t_3z+t_4 z^2),
\end{split}
\ee
with $t_1,\ldots,t_4\in \R$. Note that both masks $a_1$ and $a_2$ have two-ring stencils: the even stencil $\{2a_\ell(-4), 2a_\ell(-2), 2a_\ell(0), 2a_\ell(2), 2a_\ell(4)\}$, and the odd stencil $\{2a_\ell(-3), 2a_\ell(-1), 2a_\ell(1), 2a_\ell(3)\}$ for $\ell=1,2$.
Define a new mask $a\in \lp{0}$ by $\tilde{\pa}(z):=\widetilde{\pa_1}(z^2)\widetilde{\pa_2}(z)$.
Solving the interpolation condition $a(4k)=0$ for all $k\in \Z\bs\{0\}$ (i.e., $a(4)=a(8)=a(12)=0$ by using symmetry), we obtain four solution families of masks $a$ below:
\begin{align*}
&\{t_1=-\tfrac{1}{4}-3 t_2-s_2, t_3=-\tfrac{3}{2}-4t_2+4s_2, t_4=0\},\quad
\{t_1=-\tfrac{1}{4}-3 t_2+s_2, t_3=-\tfrac{3}{2}-4t_2-4s_2, t_4=0\},\\
&\{ t_1=-\tfrac{1}{4}-\tfrac{t_4}{8}+s_4,
t_2=0, t_3=-\tfrac{3}{2}-\tfrac{7}{2} t_4-4s_4\},\quad
\{t_1=-\tfrac{1}{4}-\tfrac{t_4}{8}-s_4, t_2=0, t_3=-\tfrac{3}{2}-\tfrac{7}{2} t_4+4s_4\},
\end{align*}
where $s_2:=\frac{1}{4} \sqrt{16 t_2^2+24 t_2+1}$ and $s_4:=\frac{1}{8}
\sqrt{t_4^2+12 t_4+4}$. Note that $t_2$ is a free parameter in the first two solutions while $t_4$ is a free parameter in the third and fourth solutions.
Optimizing the smoothness quantity
$\sm_2(a,4)$ as described in Subsection~\ref{subsec:sm}
in the third and fourth solution families with
the parameter $t_4$,
we find $t_4=-\frac{9}{32}$ in the fourth solution leading to
\be \label{tval1}
t_1=-\tfrac{\sqrt{721}+55}{256},\quad t_2=0,\quad t_3=\tfrac{\sqrt{721}-33}{64},\quad t_4=-\tfrac{9}{32}\quad \mbox{with}\quad
\sm_2(a,4)\approx 2.62522.
\ee
Note that the mask $a$ is supported inside $[-10,10]$ and its $4$-refinable function $\phi$ is supported inside $[-\frac{10}{3},\frac{10}{3}]$. Because $\sm_\infty(a,4)\ge \sm_2(a,4)-0.5\approx 2.12522>2$,
by \cref{thm:int,thm:qs} (also see \cref{cor:qs}),
the $4$-refinable function $\phi\in \CH{2}$ is $0$-interpolating with $\phi(k)=\td(k)$ for all $k\in \Z$. The  $2$-step interpolatory $2$-mask quasi-stationary $2$-subdivision scheme with masks $\{a_1,a_2\}$ is $\mathscr{C}^2$-convergent. Moreover, $\sm_2(a_1,2)\approx 2.747783$ and $\sm_2(a_2,2)\approx 2.623172$.

Optimizing the smoothness quantity
$\sm_2(a,4)$ as described in Subsection~\ref{subsec:sm}
in the first and second solution families with
the free parameter $t_2$,
we find $t_2=\frac{5}{16}$ in the first solution leading to
\be \label{tval2}
t_1=-\tfrac{\sqrt{161}+19}{16},\quad
t_2=\tfrac{5}{16},\quad t_3=\tfrac{\sqrt{161}-11}{4},\quad t_4=0\quad \mbox{with}\quad
\quad \sm_2(a,4)\approx 3.073353.
\ee
Note that the mask $a$ is supported inside $[-11,11]$ and its $4$-refinable function $\phi$ is supported inside $[-\frac{11}{3},\frac{11}{3}]$.
Because $\sm_\infty(a,4)\ge \sm_2(a,4)-0.5\approx2.573353$ (or $\sm_\infty(a,4)\ge 2.806997$ using \eqref{sminfty} with $n=3$),
by \cref{thm:int,thm:qs},
the $4$-refinable function $\phi\in \CH{2}$ is $0$-interpolating with $\phi(k)=\td(k)$ for all $k\in \Z$. The $2$-step interpolatory $2$-mask quasi-stationary $2$-subdivision scheme with masks $\{a_1,a_2\}$ is $\mathscr{C}^2$-convergent.
Moreover, $\sm_2(a_1,2)\approx 1.3074664$ and $\sm_2(a_2)\approx 3.991650$.
See \cref{fig:ex1} for the graph of the interpolating $4$-refinable function $\phi\in \CH{2}$ with the parameters $t_1,\ldots, t_4$ in \eqref{tval2}.
For both cases with the dyadic dilation factor $\dm=2$, the $2$-step interpolatory $2$-mask quasi-stationary $2$-subdivision schemes with masks $\{a_1,a_2\}$ have the $2$-step interpolation property:
\[
[(\sd_{a_2,\dm}\sd_{a_1,\dm})^n v](\dm^{2n}k)=
[\sd_{a_1,a_2, \dm}^{2n, 2} v](\dm^{2n}k)
=[\sd_{a,\dm^2}^n v](\dm^{2n}k)=
v(k),\qquad \forall\; k\in \Z, n\in \N,v\in \lp{}.
\]
\end{example}

\begin{example}\label{ex3}
\rm Let $\dm=2$ and $r=3$.
Let $a_1, a_2, a_3\in \lp{0}$ be symmetric masks supported inside $[-4,4]$ with $c_a=0$
in \eqref{symmask} and $\sr(a_1,\dm)=\sr(a_2,\dm)=\sr(a_3,\dm)=4$ such that masks $a_1, a_2$ are given in \eqref{a1a2} and the mask $a_3$ is parameterized as follows:
\[
\widetilde{\pa_3}(z)=\tfrac{1}{16}z^{-2}(1+z)^4(t_5 z^{-1}+1-2t_5+t_5z)
\]
with $t_1,\ldots,t_5\in \R$.
Define a mask $a\in \lp{0}$ by $\tilde{\pa}(z):=\widetilde{\pa_1}(z^4)\widetilde{\pa_2}(z^2)\widetilde{\pa_3}(z)$.
Solving the interpolation condition $a(8k)=0$ for all $k\in \Z\bs\{0\}$ (i.e., $a(8)=a(16)=a(24)=0$ by using symmetry), we obtain nine solution families of masks $a$.
Optimizing the smoothness quantity $\sm_2(a,8)$
for each solution family of masks among their free parameters, we have
\[
t_1=-\tfrac{\sqrt{713}+41}{32},\quad t_2=\tfrac{11}{32},\quad t_3=\tfrac{179\sqrt{713}}{616}-\tfrac{140873}{19712},\quad t_4=\tfrac{40137}{39424}-\tfrac{51\sqrt{713}}{1232},\quad t_5=\tfrac{19}{64}
\]
with $\sm_2(a,8)\approx 3.4519942$. That is, we set $t_2=\frac{11}{32}$ and $t_5=\frac{19}{64}$ in the particular solution family
\[
\{t_1=-\tfrac{1}{4}-3t_2-\tfrac{1}{4}s_2,\quad t_3=\tfrac{2t_5+5}{8(2t_5-3)}(13+32 t_2+2t_5-8s_2), \quad t_4=-\tfrac{2t_5+1}{16(2t_5-3)} (13+32 t_2+2t_5-8s_2)\}
\]
with $s_2:=\sqrt{16 t_2^2+24 t_2+1}$.
Note that the mask $a$ is supported inside $[-27,27]$ and its $8$-refinable function $\phi$ is supported inside $[-\frac{27}{7},\frac{27}{7}]$. Because $\sm_\infty(a,8)\ge 3.216038$ using \eqref{sminfty} with $n=4$,
by \cref{thm:int,thm:qs},
the $8$-refinable function $\phi\in \CH{3}$ is $0$-interpolating with $\phi(k)=\td(k)$ for all $k\in \Z$. The  $3$-step interpolatory $3$-mask quasi-stationary $2$-subdivision scheme with masks $\{a_1,a_2, a_3\}$ is $\mathscr{C}^3$-convergent. Moreover, $\sm_2(a_1,2)\approx 1.239518$, $\sm_2(a_2)\approx 3.955358$, and $\sm_2(a_3,2)\approx 3.995045$.
See \cref{fig:ex1} for the graph of the interpolating $8$-refinable function $\phi\in \CH{3}$. Finally, we point out that solution families of masks $a$ with simple expressions may not lead to large $\sm_2(a,\dm)$, for example, the solution family
$\{ t_1=-\tfrac{9}{4}, t_2=\tfrac{3}{8}, t_3=-4t_4, t_5=\tfrac{3}{2}\}$
can achieve the almost highest $\sm_2(a,\dm)\approx 2.405870$ at $t_4=-\frac{3}{32}$ and $\sm_\infty(a,\dm)\ge 2.160594$ using \eqref{sminfty} with $n=2$.
\end{example}

\begin{figure}[htbp]
	\centering
\begin{subfigure}[b]{0.30\textwidth} \includegraphics[width=\textwidth,height=0.4\textwidth]{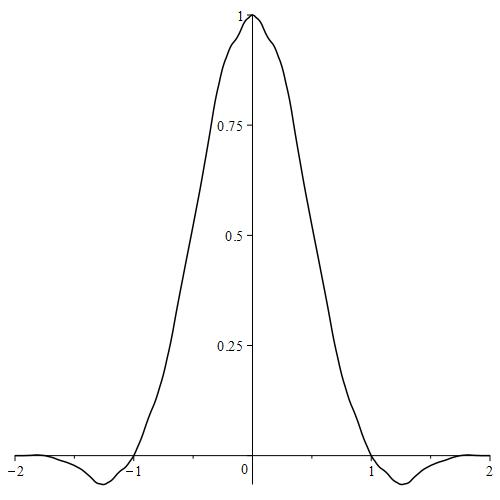} \caption{$\phi\in \mathscr{C}^1(\R)$}
	\end{subfigure}
	 \begin{subfigure}[b]{0.30\textwidth} \includegraphics[width=\textwidth,height=0.4\textwidth]{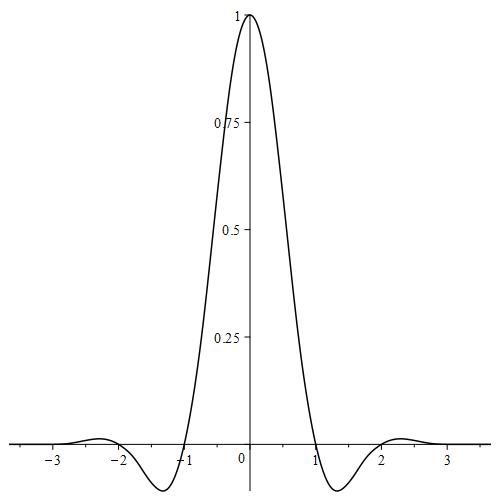} \caption{$\phi\in \mathscr{C}^2(\R)$}
	\end{subfigure}
	 \begin{subfigure}[b]{0.30\textwidth} \includegraphics[width=\textwidth,height=0.4\textwidth]{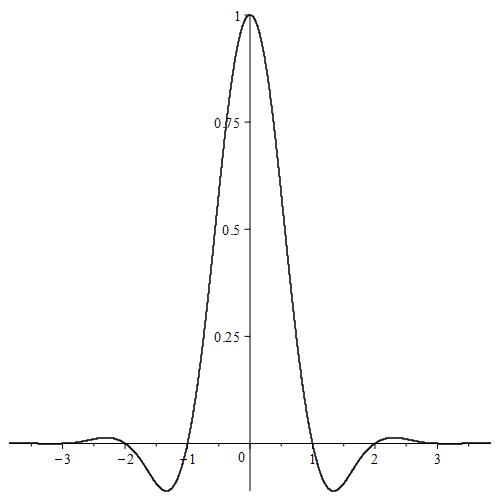} \caption{$\phi\in \mathscr{C}^3(\R)$}
	\end{subfigure}\\ \vskip0.1in
\begin{subfigure}[b]{0.30\textwidth} \includegraphics[width=\textwidth,height=0.4\textwidth]{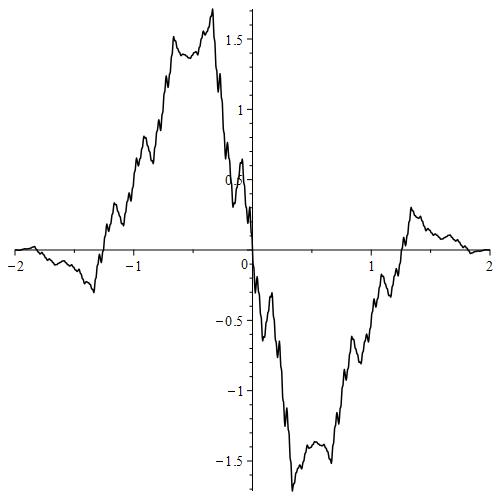} \caption{$\phi'$}
	\end{subfigure}
	 \begin{subfigure}[b]{0.30\textwidth} \includegraphics[width=\textwidth,height=0.4\textwidth]{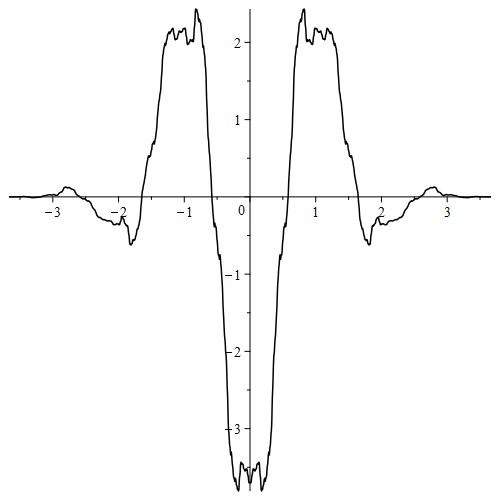} \caption{$\phi''$}
	\end{subfigure}
	 \begin{subfigure}[b]{0.30\textwidth} \includegraphics[width=\textwidth,height=0.4\textwidth]{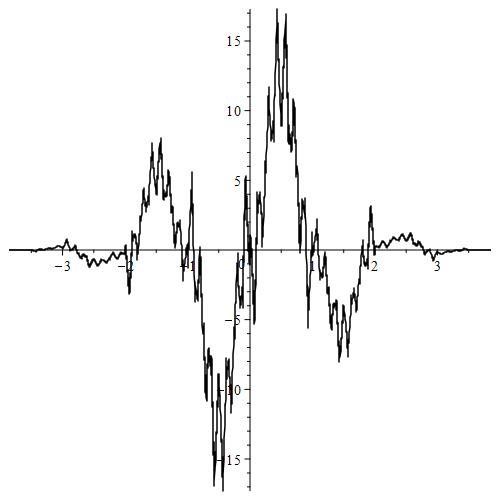} \caption{$\phi'''$}
	\end{subfigure}
\caption{
(a) is the graph of the interpolating $4$-refinable function $\phi\in \CH{1}$ in \cref{ex1} and (d) is its first-order derivative $\phi'$.
(b) is the graph of the interpolating $4$-refinable function $\phi\in \CH{2}$ in \cref{ex2} with parameters in \eqref{tval2} and (e) is
its second-order derivative $\phi''$.
(c) is the graph of the interpolating $8$-refinable function $\phi\in \CH{3}$ in \cref{ex3} and (f) is its third-order derivative $\phi'''$.
}\label{fig:ex1}
\end{figure}

\subsection{Construction procedure of all desired masks in \cref{thm:int}}\label{subsec:construct}

In order to provide some examples using \cref{thm:int}, we now discuss how to construct $s_a$-interpolating refinable functions and their $n_s$-step interpolatory subdivision schemes.

Except the special case $m_s=0$ and $n_s=1$ for standard interpolatory subdivision schemes, the conditions in \eqref{cond:ms}, \eqref{cond:ns} and \eqref{cond:ms=0} of \cref{thm:int} involve nonlinear equations, which are computationally challenging. Therefore, it is helpful to obtain further necessary conditions to facilitate the construction through \cref{thm:int}. We shall take advantages of linear-phase moments in \cite{han03,han10,han13} to facilitate the construction of $s_a$-interpolating $\dm$-refinable functions.
For convenience, throughout the paper we shall adopt the following big $\bo$ notion: For $J\in \NN$ and smooth functions $f$ and $g$,
\[
f(\xi)=g(\xi)+\bo(|\xi|^J),\quad \xi\to 0
\quad \mbox{ stands for }\quad f^{(j)}(0)=g^{(j)}(0),\quad \forall\; j=0,\ldots, J-1.
\]
Now we have the following result about necessary conditions on $s_a$-interpolating $\dm$-refinable functions.

\begin{prop}\label{prop:lpm}
Let $\dm\in \N\bs\{1\}$ be a dilation factor.
Let $\phi$ be a compactly supported $s_a$-interpolating $\dm$-refinable function normalized by $\wh{\phi}(0)=1$ with a finitely supported mask $a\in \lp{0}$. Define $J:=\sr(a,\dm)$ (i.e., the mask $a$ has order $J$ sum rules with respect to $\dm$ as in \eqref{sr:2} or \eqref{sr}). Then
\begin{align}
&\sum_{k\in \Z} k^j \phi(x+k)=(s_a-x)^j,\quad \forall\; j=0,\ldots,J-1 \mbox{ and } x\in \R,\label{phi:lpm}\\
&\sum_{k\in \Z} k^j a(k)=m_a^j,\qquad \forall\, j=0,\ldots, J-1 \quad \mbox{with}\quad m_a:=(\dm-1)s_a,
\label{a:lpm}
\end{align}
and both the function $\phi$ and mask $a$ must have order $J$ linear-phase moments as follows:
\be \label{lpm}
\wh{\phi}(\xi)=e^{-i s_a\xi}+\bo(|\xi|^J)
\quad \mbox{and}\quad \tilde{\pa}(e^{-i\xi})=e^{-im_a \xi}+\bo(|\xi|^J),\qquad \xi\to 0.
\ee
Consequently, if $\sr(a,\dm)\ge 2$, then the real numbers $s_a$ and $m_a$ must be given by
\be \label{phi:sa}
s_a=i(\wh{\phi})'(0)=\int_{\R} x\phi(x) dx=\frac{m_a}{\dm-1} \quad \mbox{with}\quad m_a:=(\dm-1)s_a=\sum_{k\in \Z} k a(k).
\ee
\end{prop}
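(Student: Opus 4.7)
The plan is to work entirely on the Fourier side, deriving every claim from two inputs: the Poisson--summation reformulation of the $s_a$-interpolation property, and the Strang--Fix zeros of $\wh\phi$ produced by the sum rules of order $J$. I would begin by establishing the linear-phase moment \eqref{lpm} for $\phi$. Applying Poisson summation to the shifted function $\phi(s_a+\cdot)$, whose Fourier transform equals $e^{is_a\eta}\wh\phi(\eta)$, the interpolation identity $\phi(s_a+k)=\td(k)$ for every $k\in\Z$ translates into
\[
\sum_{n\in\Z} e^{is_a(\xi+2\pi n)}\,\wh\phi(\xi+2\pi n)=1,\qquad \xi\in\R.
\]
The classical equivalence between sum rules of order $J$ on $a$ and the Strang--Fix property $\wh\phi^{(j)}(2\pi n)=0$ for $j=0,\dots,J-1$, $n\in\Z\bs\{0\}$ (see \cite{hanbook}) makes each $n\neq 0$ summand $\bo(|\xi|^J)$ near $\xi=0$; isolating the $n=0$ term then yields $e^{is_a\xi}\wh\phi(\xi)=1+\bo(|\xi|^J)$, that is, $\wh\phi(\xi)=e^{-is_a\xi}+\bo(|\xi|^J)$.

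Next I would transfer this expansion to the mask through the Fourier form $\wh\phi(\dm\xi)=\tilde{\pa}(e^{-i\xi})\wh\phi(\xi)$ of the refinement equation. Since $\wh\phi(0)=1$, dividing produces
\[
\tilde{\pa}(e^{-i\xi})=\frac{e^{-is_a\dm\xi}+\bo(|\xi|^J)}{e^{-is_a\xi}+\bo(|\xi|^J)}=e^{-i(\dm-1)s_a\xi}+\bo(|\xi|^J)=e^{-im_a\xi}+\bo(|\xi|^J),
\]
which is the remaining half of \eqref{lpm}. Matching the Taylor coefficients of $\tilde{\pa}(e^{-i\xi})=\sum_k a(k)\,e^{-ik\xi}$ against those of $e^{-im_a\xi}$ immediately gives the moment identities \eqref{a:lpm}.

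For the polynomial reproduction \eqref{phi:lpm}, I would reapply Poisson summation, this time to $\phi(x+\cdot)$ for a parameter $x\in\R$, producing
\[
\sum_{k\in\Z}\phi(x+k)\,e^{-ik\xi}=\sum_{n\in\Z} e^{ix(\xi+2\pi n)}\,\wh\phi(\xi+2\pi n).
\]
Differentiating $j$ times in $\xi$ at $\xi=0$ for $j\le J-1$: by Strang--Fix every $n\neq 0$ term vanishes, while the $n=0$ term equals $\tfrac{d^j}{d\xi^j}[e^{ix\xi}\wh\phi(\xi)]|_{\xi=0}=(i(x-s_a))^j$ from the LPM of $\phi$ already shown. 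A short rearrangement delivers $\sum_k k^j\phi(x+k)=(s_a-x)^j$. The concluding identity \eqref{phi:sa} is obtained by reading the first-order coefficients of \eqref{lpm} in both forms, combined with $\wh\phi'(0)=-i\int_\R x\phi(x)\,dx$. The main technical step is the invocation of Strang--Fix from the sum rules for a general compactly supported refinable distribution; once that classical ingredient is imported, the rest is routine Taylor bookkeeping.
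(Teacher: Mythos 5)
Your proof is correct and rests on the same two pillars as the paper's own argument: the Strang--Fix conditions \eqref{phi:moment} derived from the order-$J$ sum rules, and a periodization/Poisson-summation identity combined with the refinement equation $\wh{\phi}(\dm\xi)=\tilde{\pa}(e^{-i\xi})\wh{\phi}(\xi)$. The organization differs slightly, and in your favor: by applying Poisson summation directly to the shifted function $\phi(s_a+\cdot)$, the interpolation condition immediately forces the symbol $\sum_{k\in\Z}\phi(s_a+k)e^{-ik\xi}$ to equal the constant $1$, so the linear-phase moments of $\wh{\phi}$ drop out at once; the paper instead first proves the polynomial-reproduction identity \eqref{phi:poly} for all $x$, then evaluates at the nodes $x=s_a+n$ and solves a triangular system via Taylor expansion to recover $\wh{\phi}^{(m)}(0)=(-is_a)^m$. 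Two cautions. First, the pointwise identity $\sum_{k}g(k)e^{-ik\xi}=\sum_{n}\wh{g}(\xi+2\pi n)$ and its term-by-term differentiation at $\xi=0$ require justification when $\phi$ is merely a continuous compactly supported function, since the lattice sum of $\wh{\phi}$ need not converge absolutely; the paper sidesteps this by computing the Fourier coefficients of the locally finite periodization $\sum_{k\in\Z}(x-k)^j\phi(x-k)$ and invoking uniqueness of Fourier coefficients, and your step should be phrased that way to be airtight. Second, your Taylor matching yields \eqref{a:lpm} only for $j=0,\ldots,J-1$; the endpoint $j=J$ in the stated range does not follow from \eqref{lpm} (the paper's own proof has the same limitation), so this is a discrepancy in the statement rather than a defect of your argument.
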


\bp To prove the claims, the key ingredient of the proof is to show that $\sr(a,\dm)=J$ implies
\be \label{phi:poly}
\sum_{k\in \Z} \pp(k)\phi(x-k)=
\sum_{j=0}^{J-1} \frac{(-i)^j}{j!} \pp^{(j)}(x) \wh{\phi}^{(j)}(0),\qquad \forall\; \pp\in \Pi_{J-1}.
\ee
For convenience of discussion, we define
$\wh{a}(\xi):=\tilde{\pa}(e^{-i\xi})=\sum_{k\in \Z} a(k) e^{-ik\xi}$.
Then $\wh{\phi}(\dm\xi) = \wh{a}(\xi)\wh{\phi}(\xi)$. Note that $\sr(a,\dm)\ge J$  if and only if (e.g., see \cite[Theorem 3.5]{han13} or \cite[Theorem~1.2.5]{hanbook})
\be \label{sr:1}
\wh{a}(\xi+2\pi \dm^{-1}\gamma)=\bo(|\xi|^J),\qquad \xi\to 0 \quad \mbox{for all } \gamma\in \Z\bs [\dm \Z].
\ee
For $k\in \Z\bs\{0\}$, we can uniquely write $k=\dm^{n} \gamma$ with $n\in \NN$ and $\gamma\in \Z\bs [\dm \Z]$. Recursively applying $\wh{\phi}(\xi)=\wh{a}(\dm^{-1}\xi)\wh{\phi}(\dm^{-1}\xi)$ and noting that $\wh{a}$ is $2\pi$-periodic, we derive from \eqref{sr:1} that
\[
\wh{\phi}(\xi+2\pi k)=\wh{\phi}(\xi+2\pi \dm^{n} \gamma)=
\left[\prod_{j=1}^{n} \wh{a}(\dm^{-j}\xi)\right] \wh{a}(\dm^{-n-1}\xi+2\pi \dm^{-1} \gamma) \wh{\phi}(\dm^{-n-1}\xi+2\pi \dm^{-1}\gamma)
=\bo(|\xi|^J),
\]
as $\xi \to 0$. Hence, we proved
\be \label{phi:moment}
\wh{\phi}(0)=1 \quad \mbox{and}\quad \wh{\phi}(\xi+2\pi k)=\bo(|\xi|^J),\qquad \xi \to 0\; \mbox{ for all } k\in \Z\bs\{0\}.
\ee
For the $1$-periodic function $f(x):=\sum_{k\in \Z} (x-k)^j \phi(x-k)$, we observe that its Fourier coefficient $\int_0^1 f(x) e^{-i 2\pi kx}dx=
\int_{\R} x^j \phi(x) e^{-i2\pi kx} dx=
i^j \wh{\phi}^{(j)}(2\pi k)$ for $k\in \Z$. Hence, using the Fourier series of the $1$-periodic function $f$,
we easily deduce from \eqref{phi:moment} that
\be \label{phi:j:poly}
\sum_{k\in \Z} (x-k)^j \phi(x-k)=i^j \wh{\phi}^{(j)}(0),\qquad j=0,\ldots, J-1.
\ee
Using the Taylor expansion of $\pp(k)=\pp(x-(x-k))=\sum_{j=0}^\infty \frac{(-1)^j}{j!} \pp^{(j)}(x) (x-k)^j$ at the base point $x$, we conclude from \eqref{phi:j:poly}  (also see \cite[Theorem~5.5.1]{hanbook}) that \eqref{phi:poly} holds by noting
\[
\sum_{k\in \Z} \pp(k)\phi(x-k)=
\sum_{j=0}^\infty \frac{(-1)^j}{j!} \pp^{(j)}(x) \sum_{k\in \Z} (x-k)^j\phi(x-k)
=\sum_{j=0}^{J-1} \frac{(-i)^j}{j!} \pp^{(j)}(x) \wh{\phi}^{(j)}(0),\qquad
\pp\in \Pi_{J-1}.
\]
Because $\phi(s_a+n)=\td(n)$ for all $n\in \Z$, plugging $x=s_a+n$ into \eqref{phi:poly} and using the Taylor expansion of $\pp$ at the base point $s_a+n$, we observe
\[
\sum_{j=0}^{J-1} \frac{(-i)^j}{j!} \pp^{(j)}(s_a+n) \wh{\phi}^{(j)}(0)
=\pp(n)=\sum_{j=0}^{\infty} \frac{1}{j!} \pp^{(j)}(s_a+n) (-s_a)^j,\qquad \forall\; \pp\in \Pi_{J-1}, n\in \Z.
\]
For $m=0,\ldots,J-1$, we deduce from the above identity using $\pp(x)=(x-s_a-n)^m$ that $\wh{\phi}^{(m)}(0)=(-is_a)^m$. This proves
$\wh{\phi}(\xi)=e^{-is_a\xi}+\bo(|\xi|^J)$ as $\xi\to 0$, i.e., the first identity in \eqref{lpm} holds.
Using the refinement equation $\wh{\phi}(\dm \xi)=\wh{a}(\xi)\wh{\phi}(\xi)$, we have
$e^{-i\dm s_a\xi}=\wh{a}(\xi)e^{-is_a\xi}+\bo(|\xi|^J)$ as $\xi\to 0$, from which we have the second identity in \eqref{lpm} and consequently, \eqref{a:lpm} holds. Using \eqref{lpm} and \eqref{phi:poly} with $\pp(x)=x^j$, we have \eqref{phi:lpm}.
If $\sr(a,\dm)\ge 2$ (i.e., $J\ge 2$), then we obtain from \eqref{lpm} that $\wh{\phi}'(0)=-is_a$ and $\wh{a}'(0)=-im_a$, i.e., $s_a=i\wh{\phi}'(0)$ and $m_a=i \wh{a}'(0)=\sum_{k\in \Z} k a(k)$. This proves \eqref{phi:sa}.
\ep

Under the condition $\sr(a,\dm)\ge 2$,
from \eqref{phi:sa} of \cref{prop:lpm}, we must have $s_a=\frac{m_a}{\dm-1}$ with $m_a=\sum_{k\in \Z} k a(k)$, that is, the real number $s_a$ is uniquely determined by the mask $a$ of an $s_a$-interpolating $\dm$-refinable function $\phi$.
Note that if a mask $a\in \lp{0}$ has symmetry in \eqref{symmask} and $\sum_{k\in \Z} a(k)=1$, then \eqref{phi:sa} of \cref{prop:lpm} tells us
\[
m_a=\sum_{k\in \Z} ka(k)=\sum_{k\in \Z} k a(c_a-k)=\sum_{k\in \Z} (c_a-k)a(k)=c_a-m_a,
\]
from which we must have $m_a=c_a/2$, the symmetry center of the symmetric mask $a$. Hence, for symmetric masks $a$ satisfying the symmetry property in \eqref{symmask}, it follows from \cref{prop:lpm} that
\be \label{ca:sa}
s_a=\frac{c_a}{2(\dm-1)}.
\ee
Moreover, we deduce from the refinement equation that its $\dm$-refinable function $\phi$ must be supported inside $\frac{1}{\dm-1}\fs(a)$ and
have the symmetry $\phi(2s_a-\cdot)=\phi$. Consequently, the interpolating refinable functions in convergent interpolatory dual subdivision schemes considered in \cite{grv22,rv20,rom19,v23} are $s_a$-interpolating $\dm$-refinable functions with the particular choice $s_a=\frac{c_a}{2(\dm-1)}$ for an odd integer $c_a$.

Let $s_a\in \R$ satisfy \eqref{cond:sa} with $m_s\in \NN$ and $n_s\in \N$, i.e.,
$s_a=\dm^{-m_s} (\dm^{n_s}-1)^{-1}k$ for some integer $k$. Note that $\sr(a,\dm)\ge \sm_\infty(a,\dm)$ by \eqref{srsm}.
We now discuss and outline how to construct all desired masks $a\in \lp{0}$ in \cref{thm:int} aided by \cref{prop:lpm} for $s_a$-interpolating $\dm$-refinable functions.

\noindent \textbf{Construction Procedure:}
{\it
Let $m\in \NN$ and a positive integer $J>m$. Take $s_a\in \R$ satisfying \eqref{cond:sa} and select $l_a, h_a\in \Z$ with $h_a\ge l_a+(\dm-1)J$.
Then all possible desired masks $a\in \lp{0}$ in \cref{thm:int} satisfying $\fs(a)\subseteq [l_a,h_a]$ and $\sr(a,\dm)\ge J$ are given by the following procedure:
\begin{enumerate}
\item[(S1)] Parameterize masks $a$ by $\tilde{\pa}(z)=(1+z+\cdots+z^{\dm-1})^J \tilde{\pb}(z)$ with unknown $b=\{b(l_b),\ldots, b(h_b)\}_{[l_b,h_b]}$, where $l_b:=l_a$ and $h_b:=h_a-(\dm-1)J$.
    If the mask $a$ is required to have symmetry in \eqref{symmask} (i.e., $a(c_a-k)=a(k)$ for all $k\in \Z$ and this is only possible for $s_a$ in \eqref{ca:sa} with $c_a=l_a+h_a$), then we further require $b(k)=b(h_b+l_b-k)$ for all $k=l_b,\ldots, h_b$.

\item[(S2)] Solve the linear equations \eqref{a:lpm}, i.e., more precisely,
\[
\sum_{k=l_a}^{h_a} k^j a(k)=(m_a)^j,\qquad  \mbox{for all } j=0,\ldots, J-1
\]
with $m_a:=(\dm-1)s_a$ for the unknowns $b(l_b),\ldots, b(h_b)$.

\item[(S3)] Case 1: $m_s=0$. Then we solve the nonlinear equations \eqref{cond:ms=0}, i.e., more explicitly,
$A_{n_s}((\dm^{n_s}-1)s_a+\dm^{n_s}k)=\td(k)$
for all $k=\lceil \frac{(1-\dm^{-n_s})(l_a-(\dm-1)s_a)}{\dm-1}\rceil,\ldots,
\lfloor \frac{(1-\dm^{-n_s})(h_a-(\dm-1)s_a)}{\dm-1}\rfloor$,
for the remaining free parameters among $b(l_b),\ldots, b(h_b)$ after (S2).
\item[(S3')] Case 2: $m_s>0$. Then we parameterize a sequence $w\in \lp{0}$ with filter support $[l_w,h_w]:=\Z\cap (\frac{l_a}{\dm-1}-\dm^{m_s} s_a, \frac{h_a}{\dm-1}-\dm^{m_s} s_a)$. First, we solve the linear equations
\be \label{w}
\sum_{k=l_w}^{h_w} k^j w(k)=(s_a-\dm^{m_s} s_a)^j,\qquad j=0,\ldots,J-1
\ee
for the unknowns $w(l_w),\ldots, w(h_w)$.
Then we solve the nonlinear equations \eqref{cond:ms} and \eqref{cond:ns} of \cref{thm:int} for the remaining unknowns after (S2).

\item[(S4)]  Compute and optimize $\sm_2(a,\dm)$ as described in Subsection~\ref{subsec:sm} for selecting special parameter values among all remaining free parameters such that $\sm_2(a,\dm)$ is as large as possible.
\end{enumerate}
If $\sm_2(a,\dm)>m+\frac{1}{2}$ for the selected values of parameters in (S4), then $\sm_\infty(a,\dm)>m$ and item (2) of \cref{thm:int} is satisfied. Hence, all the claims in items (1)--(3) and \eqref{poly:int} of \cref{thm:int} hold.
} 

As we shall see in the proof of \cref{thm:int} in \cref{sec:proof}, the sequence $w$ in \eqref{w} must be given in \eqref{w:phi}, that is, $w(k)=\phi(\dm^{m_s} s_a+k)$ for all $k\in \Z$. Hence, the linear equations in \eqref{w} is equivalent to those in \eqref{phi:lpm}.
Note that the function $\phi(\dm^{m_s}s_a+\cdot)$ is supported inside $[\frac{l_a}{\dm-1}-\dm^{m_s} s_a, \frac{h_a}{\dm-1}-\dm^{m_s}s_a]$. Because we are constructing an $s_a$-interpolating $\dm$-refinable function $\phi$ through \cref{thm:int}, the function $\phi$ is required to be continuous and hence, it is necessary that $\phi(\frac{l_a}{\dm-1}-\dm^{m_s} s_a)=0$ and $\phi(\frac{h_a}{\dm-1}-\dm^{m_s}s_a)=0$. Consequently, the sequence $w$ must be supported inside $[l_w,h_w]:=\Z\cap (\frac{l_a}{\dm-1}-\dm^{m_s} s_a, \frac{h_a}{\dm-1}-\dm^{m_s} s_a)$.
Therefore, all the desired masks in \cref{thm:int} for $s_a$-interpolating $\dm$-refinable functions can indeed be constructed by the above Construction Procedure.


Let $\phi$ be the $\dm$-refinable function with mask $a\in \lp{0}$. For any $\gamma\in \Z$, the function $\phi(\cdot+\frac{\gamma}{\dm-1})$ is the $\dm$-refinable function with mask $a(\cdot+\gamma)$, while $\phi(-\cdot)$ is the $\dm$-refinable function with mask $a(-\cdot)$. Therefore, it is sufficient for us to consider $s_a\in [0,\frac{1}{2(\dm-1)}]$ for $s_a$-interpolating $\dm$-refinable functions.

\subsection{Special case $m_s=0$ for $s_a$-interpolating refinable functions and $n_s$-step interpolatory subdivision schemes}
\label{subsec:ms0}

We are interested in the special cases of $s_a$ satisfying \eqref{cond:sa} with $m_s=0$, (i.e., $s_a=\frac{k}{\dm^{n_s}-1}$ for some $k\in \Z$ and $n_s \in \N$), due to their special properties and structures.

For $m_s=0$, it is crucial to observe that the equations in \eqref{cond:ms} simply become $w=\td$ due to $A_{m_s}=A_0=\td$ and hence \eqref{cond:ns} is reduced to \eqref{cond:ms=0}.
Because $(\dm^{n_s}-1)s_a\in \Z$ by \eqref{cond:sa} with $m_s=0$, we can define a shifted mask $A(k):=A_{n_s}((\dm^{n_s}-1)s_a+k)$ for $k\in \Z$ and define a function $\Phi:=\phi(s_a+\cdot)$. Then $\Phi$ is obviously a $0$-interpolating (i.e., standard interpolating) $\dm^{n_s}$-refinable function with an interpolatory mask $A$ with respect to the dilation factor $\dm^{n_s}$ satisfying
\[
\Phi=\dm^{n_s}\sum_{k\in \Z} A(k) \Phi(\dm^{n_s}\cdot-k)
\quad \mbox{and}\quad
\Phi(k)=\td(k),\quad
A(\dm^{n_s} k)=\dm^{-n_s}\td(k),\qquad \forall\; k\in \Z.
\]
Hence, $\Phi$ is just a standard interpolating $\dm^{n_s}$-refinable function and its mask $A$ is a standard interpolatory mask with respect to $\dm^{n_s}$. Thus, the $\dm$-refinable function $\phi$ is just a shifted version (precisely, $\phi=\Phi(\cdot-s_a)$) of the standard interpolating $\dm^{n_s}$-refinable function $\Phi$.
In particular, we have $A=a$ and $s_a\in (\dm-1)^{-1}\Z$ for standard interpolatory $\dm$-subdivision schemes if $m_s=0, n_s=1$.

For symmetric masks $a$ in \eqref{symmask}, we must have \eqref{ca:sa}, i.e., $s_a=\frac{c_a}{2(\dm-1)}$, where $c_a/2$ is the symmetry center of the mask $a$.
Because $s_a=\frac{c_a}{2(\dm-1)}$, we have the following two cases for $m_s=0$ in \eqref{cond:sa}:

Case 1: $c_a$ is an even integer. Then
$s_a=\frac{c_a}{2(\dm-1)}=
\frac{c_a/2}{\dm-1}$ satisfies the condition \eqref{cond:sa} with $m_s=0$ and $n_s=1$, due to $c_a/2\in \Z$.
Hence, $\phi(s_a+\cdot)$ is a standard interpolating $\dm$-refinable function with the standard interpolatory mask $a(\frac{c_a}{2}+\cdot)$.
That is, the $s_a$-interpolating $\dm$-refinable function $\phi$ is just an integer-shifted version of a standard interpolating $\dm$-refinable function
and its subdivision scheme is $1$-step interpolatory.

Case 2: $c_a$ is an odd integer and $\dm$ is an odd dilation factor.
Then $s_a=\frac{c_a}{2(\dm-1)}=\frac{c_a (\dm+1)/2}{\dm^2-1}$ satisfies the condition \eqref{cond:sa} with $m_s=0$ and $n_s=2$, due to $(\dm+1)/2\in \Z$. Therefore, according to item (3) of \cref{thm:int}, its subdivision scheme is $2$-step interpolatory with the integer shift $(\dm^2-1)s_a$ (i.e., $c_a (\dm+1)/2$).
As we discussed above,
$\phi(s_a+\cdot)$ is
a standard interpolating $\dm^2$-refinable function with the interpolatory mask $A_2((\dm^2-1)s_a+\cdot)$, where the mask $A_2$ is defined in \eqref{An}.

For $s_a$ satisfying \eqref{cond:sa} with $m_s=0$ and $n_s=2$, or equivalently, $s_a=\frac{k}{\dm^2-1}$ for some $k\in \Z$,
we now show that Construction Procedure described in Subsection~\ref{subsec:construct} becomes much simpler.
Because $m_s=0$ and $n_s=2$, the equations in \eqref{cond:ms=0} of \cref{thm:int} can be equivalently expressed as
\be \label{ms=0:ns=2}
\sum_{j\in \Z} a(j)a((\dm^2-1)s_a+\dm^2 k-\dm j)=\dm^{-2} \td(k), \qquad k\in \Z.
\ee
We can easily observe that \eqref{a:lpm} in (S2) and $\sr(a,\dm) \ge J$ together are equivalent to
\be \label{ms=0:ns=2:sr=J}
\sum_{k\in \Z} k^j a(\gamma+\dm k)=\dm^{-1-j}(m_a-\gamma)^j,\qquad \mbox{ for all } j=0,\ldots,J-1
\ee
and for all $\gamma=0,\ldots,\dm-1$.
Recall that the $\gamma$-coset mask $a^{[\gamma:\dm]}(k):=a(\gamma+\dm k)$ for all $k\in \Z$ as defined in \eqref{coset}. If
\be \label{gammaa}
\#S_{\gamma_a}=J \quad \mbox{with}\quad
S_{\gamma_a}:=\fs(a^{[\gamma_a:\dm]}) \subseteq \Z,\quad
\gamma_a:=(\dm^2-1)s_ a,
\ee
where $\# S_{\gamma_a}$ is the cardinality of the set $S_{\gamma_a}$, then using the invertibility of a square Vandermonde matrix, one can easily conclude (e.g., see \cite[Theorem~2.1]{han99}) that the linear equations in \eqref{ms=0:ns=2:sr=J} for the particular $\gamma=\gamma_a$ must have a unique solution of $\{a(\gamma_a+\dm k)\}_{k\in S_{\gamma_a}}$, i.e., the linear equations
\be \label{ms=0:ns=2:gammaa}
\sum_{k\in S_{\gamma_a}} k^j a(\gamma_a+\dm k)=\dm^{-1-j}(m_a-\gamma_a)^j,\qquad \mbox{ for all } j=0,\ldots,J-1
\ee
must have a unique solution for $\{a(\gamma_a+\dm k)\}_{k\in S_{\gamma_a}}$.
Thus, because $a^{[\gamma_a:\dm]}$ on the set $S_{\gamma_a}$ (with the convention that
$a^{[\gamma_a:\dm]}(k)=0$ for all $k\in \Z\bs S_{\gamma_a}$) is uniquely determined and available now, the nonlinear equations in \eqref{ms=0:ns=2} simply become a system of linear equations, which can be easily solved.

Consequently, Construction Procedure in Subsection~\ref{subsec:construct} can be significantly reduced to

\noindent \textbf{Special Construction Procedure:}
{\it Suppose that $s_a$ satisfies \eqref{cond:sa} with $m_s=0, n_s=2$ (i.e., $s_a=\frac{k}{\dm^2-1}$ with $k\in \Z$). All the desired masks $a\in \lp{0}$ in \cref{thm:int} with $\sr(a,\dm)\ge J$ can be obtained by the following procedure:

\begin{enumerate}
\item[(S1)] Parameterize masks $a$ by $\tilde{\pa}(z):=(1+z+\cdots+z^{\dm-1})^J \tilde{\pb}(z)$ with unknown $b=\{b(l_b),\ldots, b(h_b)\}_{[l_b,h_b]}$. Note that $\fs(a)=[l_b, h_b+(\dm-1)J]$. To have symmetric masks $a$, we additionally require $b(k)=b(h_b+l_b-k)$ for $k=l_b,\ldots, h_b$.

\item[(S2)] Let $\gamma_a:=(\dm^2-1)s_a$ and obtain the coset mask $a^{[\gamma_a:\dm]}$ from the parameterized mask $a$. Then solve the system of linear equations \eqref{ms=0:ns=2:gammaa} for $\{a^{[\gamma_a:\dm]}(k)\}_{k\in S_{\gamma_a}}$ with $S_{\gamma_a}:=\fs(a^{[\gamma_a:\dm]})$.

\item[(S3)] Set $a^{[\gamma_a:\dm]}(k)=0$ for all $k\in \Z \bs S_{\gamma_a}$. Use the remaining freedoms in the mask $a$ to solve the nonlinear equations \eqref{ms=0:ns=2}, which become a system of linear equations if the solution $\{a^{[\gamma_a:\dm]}(k)\}_{k\in S_{\gamma_a}}$ in (S2) is one of the following cases:
\begin{enumerate}
\item[(1)] The solution $\{a^{[\gamma_a:\dm]}(k)\}_{k\in S_{\gamma_a}}$ in (S2) is unique, which is true if $\#S_{\gamma_a}=J$ or if one could increase the integer $J$ in \eqref{ms=0:ns=2:gammaa} until it has a unique solution $\{a^{[\gamma_a:\dm]}(k)\}_{k\in S_{\gamma_a}}$.

\item[(2)] The free parameters in solution $\{ a^{[\gamma_a:\dm]}(k)\}_{k\in S_{\gamma_a}}$ of (S2) are not treated as unknowns in (S3) or simply preassigned  parameter values in advance before solving \eqref{ms=0:ns=2} in (S3).
\end{enumerate}
\end{enumerate}
} 

As we shall see in the following example,
quite often the solution in (S2) is unique even if
$\#S_{\gamma_a}>J$. Consequently, we only need to solve linear equations in (S3). Note that the subdivision schemes are $2$-step interpolatory.
For an odd dilation factor $\dm$ and $s_a=\frac{1}{2(\dm-1)}$, our computation indicates that there often exist desired unique symmetric masks $a$ satisfying \eqref{ms=0:ns=2} with the highest possible order $J$ of sum rules with respect to a prescribed filter support $\fs(a)$. Here we provide an example of $2$-step interpolatory $\dm$-subdivision schemes with $\dm=3$ by using Special Construction Procedure.

\begin{example}\label{ex:M3}
\rm
Let $\dm=3$ and $s_a=\frac{c_a}{2(\dm-1)}$ with $c_a=1$. Note that $\gamma_a:=(\dm^2-1)s_a=2$.
We consider symmetric masks $a$ such that $\fs(a)=[-3,4]$ and $\sr(a,\dm)=J$ with $J=2$.
We parameterize masks $a$ in (S1) by $\tilde{\pa}(z)=(1+z+z^2)^J \tilde{\pb}(z)$ with $b=\{t_1, t_2, t_2, t_1\}_{[-3,0]}$. Note that $S_{\gamma_a}:=\fs(a^{[\gamma_a:\dm]})=\{-1,0\}$. and $\#S_{\gamma_a}=J$. Hence, the condition \eqref{gammaa} guarantees the unique solution $\{a^{[\gamma_a:\dm]}(k)\}_{k\in S_{\gamma_a}}$ to \eqref{ms=0:ns=2:gammaa} in (S2), which is given through the solution
$t_2=\tfrac{1}{18}-t_1$ with $t_1\in \R$. Explicitly,
\[
a^{[\gamma_a:\dm]}=\{\tfrac{1}{6}, \tfrac{1}{6}\}_{[-1,0]} \quad \mbox{and}\quad
b=\{t_1, \tfrac{1}{18}-t_1, \tfrac{1}{18}-t_1, t_1\}_{[-3,0]}.
\]
Now solving the linear equations \eqref{ms=0:ns=2} in (S3) with $\fs(a)=[-3,4]$, we obtain a unique solution $t_1=-\frac{1}{36}$ and hence we obtain a
symmetric mask $a\in \lp{0}$ with $\sr(a,\dm)=2$:
\[
a=\{ -\tfrac{1}{36}, \tfrac{1}{36}, \tfrac{1}{6}, \tfrac{1}{3},\tfrac{1}{3},
\tfrac{1}{6}, \tfrac{1}{36}, -\tfrac{1}{36}\}_{[-3,4]},\qquad
b=\{-\tfrac{1}{36}, \tfrac{1}{12}, \tfrac{1}{12}, -\tfrac{1}{36}\}_{[-3,0]}.
\]
By calculation, we have $\sm_2(a,\dm)\approx 1.393267$. Moreover, we conclude from \eqref{sm:est:n} and \eqref{sm:special} with $\gamma_0=-1$ that $\rho_0(b,\dm)_\infty=\dm |b(-1)|=\frac{1}{4}$ and hence
$\sm_\infty(a,\dm)=-\log_\dm \rho_0(b,\dm)_\infty=
\log_3 4\approx 1.261860$.
By \cref{thm:int}, its symmetric
$\dm$-refinable function $\phi\in \CH{1}$ must be $s_a$-interpolating and its $2$-step interpolatory $\dm$-subdivision scheme must be $\mathscr{C}^1$-convergent.

Next we consider symmetric masks $a$ such that $\fs(a)=[-6,7]$ and $\sr(a,\dm)=J$ with $J=3$.
We parameterize masks $a$ in (S1) by $\tilde{\pa}(z)=(1+z+z^2)^J\tilde{\pb}(z)$ with $b=\{t_1, t_2, t_3, t_4, t_4, t_3, t_2, t_1\}_{[-6,1]}$. Then $S_{\gamma_a}=\{-2,-1,0,1\}$ and hence, $\#S_{\gamma_a}=4>J=3$. Obtaining the coset mask $a^{[\gamma_a:\dm]}$ and solving
the linear system \eqref{ms=0:ns=2:gammaa} in (S2), we find that $a^{[\gamma_a:\dm]}$ is actually uniquely determined by \eqref{ms=0:ns=2:gammaa} with
a solution $t_3=-\frac{1}{48}-6t_1-3t_2$ and $t_4=\frac{17}{432}+5t_1+2t_2$ for free parameters $t_1, t_2$. Explicitly,
\[
a^{[\gamma_a:\dm]}=\{-\tfrac{1}{48}, \tfrac{3}{16}, \tfrac{3}{16},-\tfrac{1}{48}\}_{[-2,1]}.
\]
Consequently, solving the linear equations \eqref{ms=0:ns=2} in (S3),
we obtain a solution $t_2=-\frac{1}{432}-4t_1$ and the symmetric mask $a\in \lp{0}$ with the symmetry center $1/2$ and $\sr(a,\dm)=3$ is given by
\be \label{a:ex:M3:C2}
a=\{t_1, -\tfrac{1}{432}-t_1,
-\tfrac{1}{48}, -\tfrac{1}{48}-2t_1, \tfrac{17}{432}+2t_1, \tfrac{3}{16}, \tfrac{137}{432}, \tfrac{137}{432},
\tfrac{3}{16},
\tfrac{17}{432}+2t_1,
-\tfrac{1}{48}-2t_1,
-\tfrac{1}{48},
-\tfrac{1}{432}-t_1, t_1\}_{[-6,7]},
\ee
where $t_1\in \R$.
For $t_1=0$, we have $\sm_2(a,\dm)\approx 2.173176$ and $\fs(a)=[-5,6]$.
For $t_1=\frac{1}{432}$,
we have $\sm_2(a,\dm)\approx 2.458912$
and $\sm_\infty(a,\dm)\ge 2.136745>2$ by  using \eqref{sminfty} with $n=4$.
According to \cref{thm:int}, its symmetric $\dm$-refinable function $\phi\in \CH{2}$ must be $s_a$-interpolating and its $2$-step interpolatory $\dm$-subdivision scheme must be $\mathscr{C}^2$-convergent.

Next we consider symmetric masks $a$ such that $\fs(a)=[-11,12]$ and $\sr(a,\dm)=J$ with $J=5$.
We parameterize masks $a$ in (S1) by $\tilde{\pa}(z)=(1+z+z^2)^J\tilde{\pb}(z)$ with
\[
b=\{t_1, t_2, t_3, t_4, t_5, t_6, t_7, t_7, t_6, t_5, t_4, t_3, t_2, t_1\}_{[-11,2]}.
\]
Then we have $S_{\gamma_a}=\fs(a^{[\gamma_a:\dm]})
=[-4,3]\cap \Z$ and hence, $\#S_{\gamma_a}=8>J=5$. Obtaining the coset $a^{[\gamma_a:\dm]}$ and solving
the linear equations \eqref{ms=0:ns=2:gammaa} in (S2), we have a solution
\[
t_5=\tfrac{1}{256}-70t_1-35t_2-15t_3-5t_4,
\quad
t_6=-\tfrac{2875}{186624}+189t_1+90t_2+35t_3+9t_4,
\quad
t_7=\tfrac{1265}{93312}-120t_1-56t_2-5t_4
\]
with free parameters $t_1, t_2, t_3, t_4\in \R$.
Then we obtain $a^{[\gamma_a: \dm]}$ with only one free parameter below:
\[
a^{[\gamma_a:\dm]}=\{t, \tfrac{1}{256}-5t, -\tfrac{25}{768}+9t, \tfrac{25}{128}-5t,
\tfrac{25}{128}-5t, -\tfrac{25}{768}+9t, \tfrac{1}{256}-5t,t\}_{[-4,3]}
\]
with $t:=5t_1+t_2$. If we preset $t=0$ (i.e., set $t_2=-5t_1$), then we only need to solve linear equations in (S3) with three unknowns $\{t_1, t_3, t_4\}$ in the symmetric mask $b$. The solution is given by
$t_3=-40t_1, t_4=\frac{5305}{9144576}+291t_1$ and hence
\[
b|_{[-4,2]}=\{\tfrac{97445}{9144576}-455t_1, -\tfrac{46565}{4572288} + 958t_1, \tfrac{2299}{2286144} - 750t_1,
\tfrac{5305}{9144576} + 291t_1, -40t_1, -5t_1, t_1\}_{[-4,2]}
\]
with $t_1\in \R$. Optimizing the smoothness quantity $\sm_2(a,\dm)$ and choosing
$t_1=\frac{1}{150528}$, we obtain a symmetric mask $a\in \lp{0}$ with symmetry center $1/2$ and $\sr(a,\dm)=5$ such that
\[
a|_{[1,12]}=\{
\tfrac{11558345}{36578304},
\tfrac{25}{128},
\tfrac{921259}{18289152},
-\tfrac{59711}{2032128},
-\tfrac{25}{768},
-\tfrac{110615}{12192768},
\tfrac{178057}{36578304},
\tfrac{1}{256},
\tfrac{16199}{18289152},
-\tfrac{25}{75264},
0,
\tfrac{1}{150528}\}_{[1,12]}
\]
with $\sm_2(a,\dm)\approx 3.329871$ and
$\sm_\infty(a,\dm)\ge 3.136794$ by
using \eqref{sminfty} with $n=2$.
By \cref{thm:int}, its
$\dm$-refinable function $\phi\in \CH{3}$ must be $s_a$-interpolating and its $2$-step interpolatory subdivision scheme must be $\mathscr{C}^3$-convergent.
See \cref{fig:ex:M3} for graph of the $s_a$-interpolating $\dm$-refinable function $\phi$. We mention that an example of $\mathscr{C}^3$-convergence $2$-step interpolatory $3$-subdivision schemes is reported in \cite[(28)]{grv22} whose mask has support $[-14,15]$, which is longer than the support $[-11,12]$ of our $\mathscr{C}^3$ example here.
\end{example}

\begin{figure}[htbp]
	\centering
\begin{subfigure}[b]{0.24\textwidth} \includegraphics[width=\textwidth,height=0.4\textwidth]
{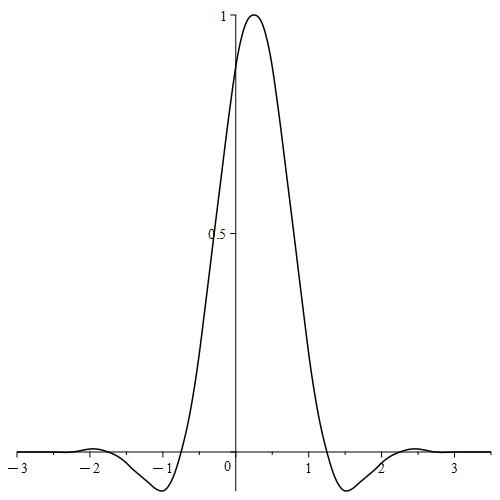} \caption{$\phi\in \CH{2}$}
	\end{subfigure}
	 \begin{subfigure}[b]{0.24\textwidth} \includegraphics[width=\textwidth,height=0.4\textwidth]
{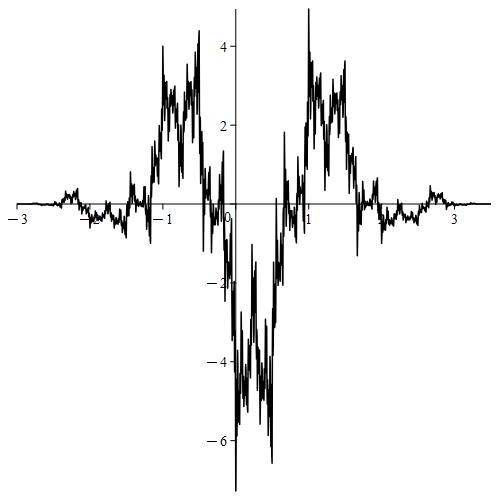} \caption{$\phi''$}
	\end{subfigure} \begin{subfigure}[b]{0.24\textwidth}	 \includegraphics[width=\textwidth,height=0.4\textwidth]
{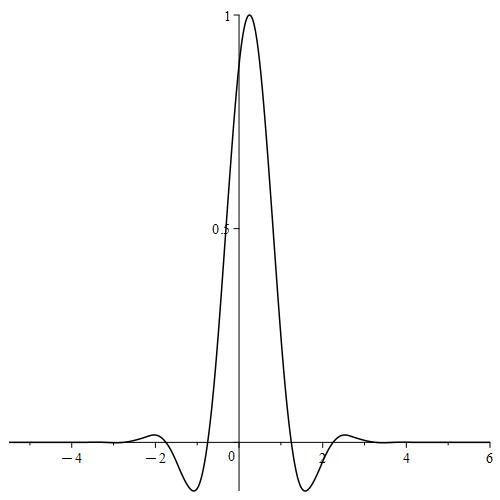}
		\caption{$\phi\in \CH{3}$}
\end{subfigure}
\begin{subfigure}[b]{0.24\textwidth}	 \includegraphics[width=\textwidth,height=0.4\textwidth]
{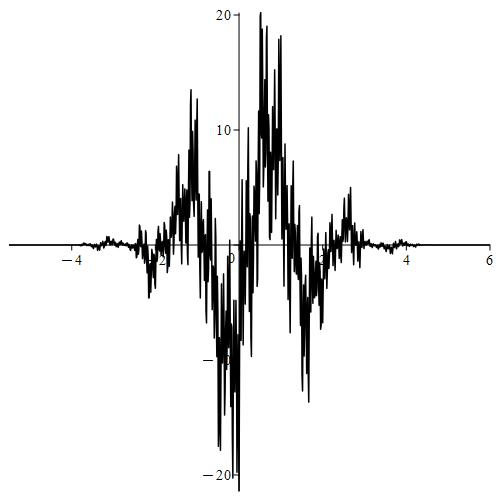}
		\caption{$\phi'''$}
	\end{subfigure}
\caption{
(a) is the graph of the $\frac{1}{4}$-interpolating $3$-refinable function $\phi\in \CH{2}$ in \cref{ex:M3} with the mask $a$ in \eqref{a:ex:M3:C2} with $t_1=\frac{1}{432}$, $\sr(a,3)=3$, $\fs(a)=[-6,7]$ and $\mbox{supp}(\phi)=[-3,\frac{7}{2}]$.
(b) is the graph of the second-order derivative $\phi''$ in (a).
(c) is the graph of the $\frac{1}{4}$-interpolating $3$-refinable function $\phi\in \CH{3}$ with the mask $a$ with $t_1=\frac{1}{150528}$, $\sr(a,3)=5$,
$\fs(a)=[-11,12]$ and $\mbox{supp}(\phi)=[-\frac{11}{2},6]$.
(d) is the graph of the third-order derivative $\phi'''$ in (c).
}\label{fig:ex:M3}
\end{figure}

\subsection{Examples of $n_s$-step interpolatory dyadic subdivision schemes with $\dm=2$}

In this subsection, we only consider $\dm=2$.
For $s_a$-interpolating $2$-refinable functions with symmetric masks,
we know from \eqref{ca:sa} that $s_a=\frac{c_a}{2(\dm-1)}=c_a/2\in [\frac{1}{2}+\Z]$ must hold for any odd integer $c_a$. Before presenting some examples, generalizing a result in \cite{grv22} for symmetric masks, we prove that even without symmetry, there are no $s_a$-interpolating $2$-refinable functions for $s_a\in [\frac{1}{2}+\Z]$.

\begin{lemma}\label{lem:M=2}
For $\dm=2$ and $s_a\in [\frac{1}{2}+\Z]$, there does not exist a compactly supported continuous $s_a$-interpolating $\dm$-refinable function with a finitely supported mask $a\in \lp{0}$.
\end{lemma}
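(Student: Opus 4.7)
The plan is to assume such a $\phi$ exists and derive a contradiction by extracting an algebraic identity for the mask symbol that, when combined with continuity at the boundary of the support, becomes impossible to satisfy.

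First I would reduce to the canonical case $s_a=\tfrac12$. Since $s_a-\tfrac12\in\Z$, writing $s_a=\tfrac12+n_0$ with $n_0\in\Z$ and setting $\psi(x):=\phi(x+n_0)$ gives a continuous, compactly supported, $\tfrac12$-interpolating $2$-refinable function whose mask $b(k):=a(k+n_0)$ is still finitely supported. So I may assume $s_a=\tfrac12$.

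Next, introduce the finitely supported sequence $w\in\lp{0}$ defined by $w(k):=\phi(k+1)$. Applying the refinement equation $\phi(x)=2\sum_k a(k)\phi(2x-k)$ at $x=\tfrac12+n$ (where $\phi(\tfrac12+n)=\td(n)$) and at $x=1+n$ (where $\phi(1+n)=w(n)$), and converting the integer-point values of $\phi$ via $\phi(j)=w(j-1)$, I obtain the two convolution identities $[a*w](2n)=\tfrac12\td(n)$ and $[a*w](2n+1)=\tfrac12 w(n)$ for all $n\in\Z$. These combine into the single symbol identity
\[
\tilde{\pa}(z)\,\widetilde{\pw}(z) \;=\; \tfrac12 \;+\; \tfrac{z}{2}\widetilde{\pw}(z^2).
\]
Plugging $z=1$ and using $\tilde{\pa}(1)=1$ forces $\widetilde{\pw}(1)=1$, so $w$ is nonzero with a well-defined filter support $[l_w,h_w]=\fs(w)$.

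The main step, where continuity enters, is a support analysis. Let $[l_a,h_a]:=\fs(a)$, so $\phi$ is supported in $[l_a,h_a]$ and $w$ in $[l_a-1,h_a-1]$. Continuity of the compactly supported $\phi$ forces it to vanish at the boundary of its closed support, so $\phi(l_a)=\phi(h_a)=0$; translating by $1$, $w(l_a-1)=w(h_a-1)=0$, which tightens the filter support of $w$ to $l_w\ge l_a$ and $h_w\le h_a-2$. This is the crucial use of the continuity hypothesis: the discontinuous example $\phi=\chi_{[0,1]}$ with $\tilde{\pa}(z)=(1+z)/2$ and $w=\td$ satisfies the displayed identity but violates these strict bounds.

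Finally, I compare the extremal nonzero powers of $z$ on each side of the displayed identity. The left-hand side has nonzero top and bottom coefficients $a(h_a)w(h_w)$ and $a(l_a)w(l_w)$ at positions $h_a+h_w$ and $l_a+l_w$, while the right-hand side has top position $\max\{0,2h_w+1\}$ and bottom position $\min\{0,2l_w+1\}$. Matching the tops: if $h_w\ge 0$ then $h_a=h_w+1$, contradicting $h_w\le h_a-2$; hence $h_w\le -1$ (and $h_a=-h_w$). Matching the bottoms symmetrically: if $l_w\le -1$ then $l_a=l_w+1$, contradicting $l_w\ge l_a$; hence $l_w\ge 0$ (and $l_a=-l_w$). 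But then $h_w\le -1<0\le l_w$ contradicts $l_w\le h_w$, completing the proof. I expect the main obstacle to be cleanly organizing the continuity input, since it is precisely the vanishing of $\phi$ at the two endpoints of $[l_a,h_a]$ that distinguishes the continuous case from the characteristic-function counterexample — without those two vanishing conditions the degree-matching argument collapses.
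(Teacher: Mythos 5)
Your proof is correct and follows essentially the same route as the paper: the same reduction to $s_a=\tfrac12$, the same auxiliary sequence $w=\phi(1+\cdot)$, the same two refinement identities, and the same use of continuity to force $\fs(w)\subseteq[l_a,h_a-2]$, followed by a comparison of extremal indices. Your packaging of the two coset relations into the single symbol identity $\tilde{\pa}(z)\widetilde{\pw}(z)=\tfrac12+\tfrac{z}{2}\widetilde{\pw}(z^2)$ and the degree-matching at both ends is just a clean reformulation of the paper's evaluation at the extremal indices $j=\tfrac{l_a+l_w}{2}$, etc., and it also gives you $w\neq 0$ for free via $\widetilde{\pw}(1)=1$.
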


\begin{proof} We use proof by contradiction.
Suppose not. Then we have an $s_a$-interpolating $\dm$-refinable function $\phi$ with a finitely supported mask $a\in \lp{0}$. As we discussed before, it suffices to consider $s_a=\frac{1}{2}$. Define $[l_a,h_a]:=\fs(a)$, the filter support of the mask $a$. Then $a(l_a)a(h_a)\ne 0$. Define a sequence $w$ by $w(k):=\phi(1+k)$ for all $k\in \Z$ and define $[l_w, h_w]:=\fs(w)$. Note that $w(l_w)w(h_w)\ne 0$.
From the refinement equation $\phi(x)=\sum_{k=l_a}^{h_a} a(k) \phi(2x-k)$ with $x=1+j$ and $x=\frac{1}{2}+j$ with $j\in \Z$, noting $\phi(1+j)=w(j)$ and $\phi(\frac{1}{2}+j)=\td(j)$ for all $j\in \Z$,
we have
\begin{align}
&\sum_{k=l_a}^{h_a}a(k) w(1+2j-k)=2^{-1}w(j),\qquad \forall\; j\in \Z, \label{M=2:1}\\
&\sum_{k=l_a}^{h_a} a(k)w(2j-k)=2^{-1}\td(j),\qquad \forall\; j\in \Z.\label{M=2:2}
\end{align}
Note that $\phi$ must be supported inside $[l_a,h_a]$ and $\phi(l_a)=\phi(h_a)=0$ because $\phi$ is continuous. Therefore, we must have
\be \label{la:lw}
l_a\le l_w\le h_w \le h_a-2.
\ee
If $l_a+l_w$ is an odd integer, then \eqref{M=2:1} with $j=\frac{l_a+l_w-1}{2}$ becomes $ a(l_a)w(l_w)=2^{-1}w(\frac{l_a+l_w-1}{2})$. Since $a(l_a)w(l_w)\ne 0$, we must have $\frac{l_a+l_w-1}{2}\ge l_w$, i.e., $l_w\le l_a-1$, contradicting \eqref{la:lw}. Hence, $l_a+l_w$ must be an even integer.
Now \eqref{M=2:2} with $j=\frac{l_a+l_w}{2}$ becomes $a(l_a)w(l_w)=2^{-1} \td(\frac{l_a+l_w}{2})$, which forces $l_w=-l_a$ because $a(l_a)w(l_w)\ne 0$.

If $h_a+h_w$ is an odd integer, then \eqref{M=2:1} with $j=\frac{h_a+h_w-1}{2}$ becomes $a(h_a) w(h_w)=2^{-1} w(\frac{h_a+h_w-1}{2})$, which forces $\frac{h_a+h_w-1}{2}\le h_w$, that is, $h_w\ge h_a-1$, contradicting \eqref{la:lw}. Therefore, $h_a+h_w$ must be an even integer. Then \eqref{M=2:2} with $j=\frac{h_a+h_w}{2}$ becomes $a(h_a) w(h_w)=2^{-1} \td(\frac{h_a+h_w}{2})$, which forces $h_a+h_w=0$, that is, $h_w=-h_a$, due to $a(h_a) w(h_w)\ne 0$.

Hence, we proved $l_w=-l_a$ and $h_w=-h_a$, from which we must have $l_a=h_a$ by $l_w\le h_w$ and $l_a\le h_a$. But $l_a=h_a$ contradicts $l_a\le h_a-2$ in \eqref{la:lw}.
This proves the nonexistence of continuous $s_a$-interpolating $2$-refinable functions with finitely supported masks $a\in \lp{0}$.
\end{proof}

We now present a few examples of $s_a$-interpolating $2$-refinable functions and their dyadic subdivision schemes using \cref{thm:int} and Special Construction Procedure in Subsection~\ref{subsec:ms0}.

\begin{example}\label{ex:M2}
\rm
Let $\dm=2$ and $s_a=\frac{1}{3}$ which satisfies \eqref{cond:sa} with $m_s=0$ and $n_s=2$. Note that $\gamma_a:=(\dm^2-1)s_a=1$.
We consider masks $a$ with $\fs(a)=[-2,4]$ and $\sr(a,\dm)=J$ with $J=2$. We parameterize masks $a$ in (S1) by $\tilde{\pa}(z)=(1+z)^J \tilde{\pb}(z)$ with $b=\{t_1, t_2, t_3, t_4, t_5\}_{[-2,2]}$. Then $S_{\gamma_a}:=\fs(a^{[\gamma_a:\dm]})=\{-1, 0, 1\}$ and $\#S_{\gamma_a}=3>J=2$.
Obtaining the coset $a^{[\gamma_a:\dm]}$ and solving the linear equations \eqref{ms=0:ns=2:gammaa} in (S2), we have a solution $t_4=\frac{2}{3}-4t_1-3t_2-2t_3, t_5=-\frac{5}{12}+3t_1+2t_2+t_3$ with the free parameters $t_1, t_2, t_3\in \R$. Now
the coset mask $a^{[\gamma_a:\dm]}$ is given by
\[
a^{[\gamma_a:\dm]}=\{\tfrac{1}{6}+t, \tfrac{1}{3}-2t, t\}_{[-1,1]},
\]
where $t:=2t_1+t_2-\frac{1}{6}$.
Not regarding $t$ as an unknown (i.e., set $t_2=\frac{1}{6}-2t_1+t$ and only solving for $\{t_1, t_3\}$ but not $t$), we see that
the nonlinear equations \eqref{ms=0:ns=2} of (S3) actually becomes linear equations, which have a unique solution $t_1=\frac{36t^2 + 12t + 1}{12(6t - 1)}, t_3 = \frac{t(12t - 7)}{2(1-6t)}$, which leads to
\be \label{exM2C1}
a=\left\{\tfrac{(1+6t)^2}{72t-12},
\tfrac{1}{6}+t, \tfrac{36t^2-6s+7}{12-72t},
\tfrac{1}{3}-2t, \tfrac{6s^2-3t}{2-12t}, t, \tfrac{3t^2}{6t-1} \right\}_{[-2,4]},
\ee
where $t\in \R\bs \{\frac{1}{6}\}$.
For $t=0$,
we have $\fs(a)=[-2,1]$ and $\sm_2(a,\dm)\approx 1.04123$;
Moreover, by $b=\{-\frac{1}{12}, \frac{1}{3}\}_{[-2,-1]}$, we conclude from \eqref{sm:est:n} and \eqref{sm:special} with $\gamma_0=-1$ (also see \cite[Theorem~2.1 and Corollary~2.2]{han98}) that
$\rho_0(b,\dm)_\infty=\dm |b(\gamma_0)|=\frac{2}{3}$ and hence
$\sm_\infty(a,2)=-\log_2 \tfrac{2}{3}\approx0.584962$. By \cref{thm:int}, its $\dm$-refinable function $\phi$ must be $s_a$-interpolating.
For $t=-\frac{1}{18}$,
we have $\sm_2(a,\dm)\approx 1.821703$ and hence, $\sm_\infty(a,\dm)\ge \sm_2(a,\dm)-0.5\ge 1.243484$. In fact, $\sm_\infty(a,\dm)\ge 1.305626$ by \eqref{sminfty} with $n=5$.
Hence, according to \cref{thm:int},
its $\dm$-refinable function $\phi\in \mathscr{C}^1(\R)$ must be $s_a$-interpolating and its $2$-step interpolatory dyadic subdivision scheme is $\mathscr{C}^1$-convergent.
See \cref{fig:ex:M2} for the graph of the $s_a$-interpolating $2$-refinable function $\phi$ with the mask $a$ in \eqref{exM2C1} and
$t=-\frac{1}{18}$.

For $\dm=2$ and $s_a=\frac{1}{3}$,
we consider masks $a$ with $\fs(a)=[-6,4]$ and $\sr(a,\dm)=J$ with $J=4$. We parameterize masks $a$ in (S1) by $\tilde{\pa}(z)=(1+z)^J \tilde{\pb}(z)$ with $b=\{t_1, t_2, t_3, t_4, t_5, t_6, t_7\}_{[-6,0]}$. Note that $\gamma_a=1$ and $S_{\gamma_a}:=\fs(a^{[\gamma_a:\dm]})=[-3,1]\cap \Z$. Hence, $\#S_{\gamma_a}=5>J=4$.
Obtaining the coset mask $a^{[\gamma_a:\dm]}$ and solving the linear equations \eqref{ms=0:ns=2:gammaa} in (S2), we obtain a solution
\begin{align*}
&t_1 = -\tfrac{91}{2592}+ t_5
+4t_6+ 10t_7,\quad
t_2 = \tfrac{37}{216} - 4t_5 - 15t_6 - 36t_7,\quad\\
&t_3 = 45t_7 + 20t_6 + 6t_5 - \tfrac{277}{864},\quad
t_4 = \tfrac{20}{81} - 4t_5 - 10t_6 - 20t_7
\end{align*}
with free parameters $t_5, t_6, t_7\in \R$.
Hence, we find that $a^{[\gamma_a:\dm]}$ has only one free parameter given by
\[
a^{[\gamma_a:\dm]}=\{
\tfrac{5}{162}+s, -\tfrac{4}{27}-4s, \tfrac{10}{27}+6s, \tfrac{20}{81}-4s,
s\}_{[-1,1]},
\]
where $s:=t_6+4t_7$.
Now solving the nonlinear equations \eqref{ms=0:ns=2} in (S3), we obtain four solution families with complicated expressions. One of the solutions is given by
\[
t_5=\tfrac{4}{81}t^3 + \tfrac{37}{81}t^2 - \tfrac{991}{5184}t + \tfrac{323}{10368},\quad
t_6:=\tfrac{5}{81}t,\quad
t_7=-\tfrac{4}{405}t^3 - \tfrac{13}{135}t^2 - \tfrac{121}{2880}t - \tfrac{323}{51840},
\]
where $t$ is a root of $512 t^4 + 5504 t^3 + 6370 t^2 + 2501t + 323=0$.
For the root $t\approx -0.319621$, we have $\sm_2(a,\dm)\approx 2.25960$. Hence, $\sm_\infty(a,\dm)\ge 1.75960$. By \cref{thm:int}, for the mask $a$ with $t\approx -0.319621$,
the $2$-refinable function $\phi\in \CH{1}$ is $s_a$-interpolating and its subdivision scheme is $\mathscr{C}^1$-convergent $2$-step interpolatory subdivision scheme.
See \cref{fig:ex:M2} for the graph of the $s_a$-interpolating $2$-refinable function $\phi$ with the above mask $a$ and $t\approx -0.319621$, which is approximately given by
\be \label{exM2sr4}
\begin{split}
a\approx \{
&0.00010829639, 0.0018661071,  0.010752801, -0.032155785, - 0.06531331, 0.19638181,\\
&0.51228456,
0.36290592, 0.044484714,
- 0.028998091, - 0.0023170997\}_{[-6,4]}.
\end{split}
\ee
See \cref{fig:ex:M2} for the graph of the $s_a$-interpolating $2$-refinable function $\phi$.
\end{example}

We now consider another example using $s_a=\frac{1}{7}$ which satisfies \eqref{cond:sa} with $m_s=0$ and $n_s=3$. Therefore, we have to use the general Construction Procedure in Subsection~\ref{subsec:construct}.

\begin{example}\label{ex:M2a}
\rm
Let $\dm=2$ and $s_a=\frac{1}{7}$ which satisfies \eqref{cond:sa} with $m_s=0$ and $n_s=3$. Note that \eqref{cond:ms} becomes $w=\td$ due to $m_s=0$.
We consider masks $a$ with $\fs(a)=[-2,1]$ and $\sr(a,\dm)=J$ with $J=2$. We parameterize masks $a$ in (S1) by $\tilde{\pa}(z)=(1+z)^J \tilde{\pb}(z)$ with $b=\{t_1, t_2\}_{[-2,-1]}$ for unknowns $t_1$ and $t_2$. Solving the linear equations \eqref{a:lpm} in (S2) of Construction Procedure, we have a unique solution $t_1=-\frac{1}{28}, t_2=\frac{2}{7}$. Because $m_s=0$, for (S3), we can directly check that the nonlinear equations \eqref{cond:ms=0} with $n_s=3$
are automatically satisfied. Hence, we obtain a unique solution:
\be \label{exM2sr2d7}
a=\{-\tfrac{1}{28},\tfrac{3}{14},\tfrac{15}{28},\tfrac{2}{7}\}_{[-2,1]},
\quad b=\{-\tfrac{1}{28},\tfrac{2}{7}\}_{[-2,-1]}
 \quad \mbox{with}\quad \sm_2(a,\dm)\approx 1.29617.
\ee
By \eqref{sm:est:n} and \eqref{sm:special} with $\gamma_0=-1$, we have $\rho_0(b,2)_\infty=\dm |b(\gamma_0)|=\frac{4}{7}$ and hence
$\sm_\infty(a,2)=-\log_2 \tfrac{4}{7}\approx0.80735$.
Hence, according to \cref{thm:int}, the $2$-refinable function $\phi$ must be $s_a$-interpolating and the dyadic subdivision scheme is $3$-step interpolatory.
See \cref{fig:ex:M2} for the graph of the $s_a$-interpolating $2$-refinable function $\phi$.
\end{example}

\begin{figure}[htbp]
	\centering
\begin{subfigure}[b]{0.24\textwidth} \includegraphics[width=\textwidth,height=0.4\textwidth]{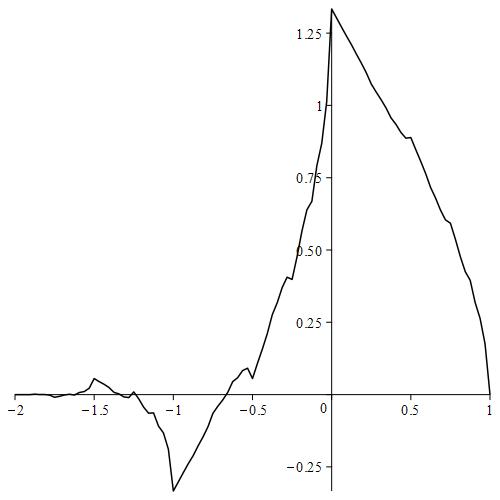} \caption{$\phi\in \CH{0}$}
	\end{subfigure}
	 \begin{subfigure}[b]{0.24\textwidth} \includegraphics[width=\textwidth,height=0.4\textwidth]
{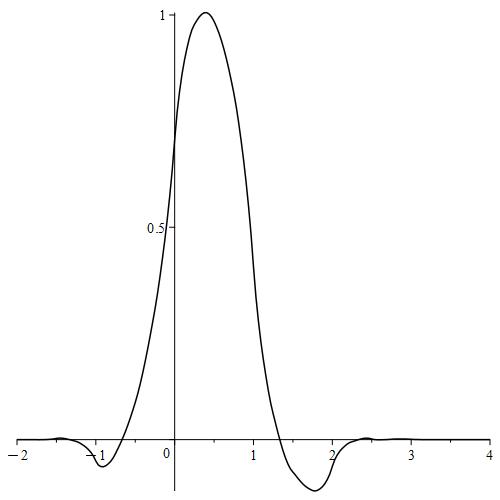} \caption{$\phi\in \CH{1}$}
	\end{subfigure}
	 \begin{subfigure}[b]{0.24\textwidth} \includegraphics[width=\textwidth,height=0.4\textwidth]{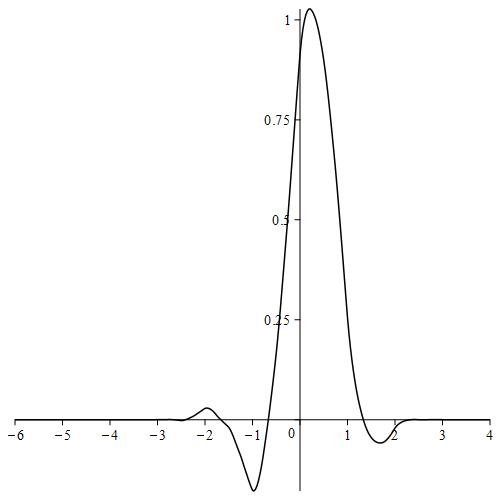} \caption{$\phi\in \CH{1}$}
	\end{subfigure} \begin{subfigure}[b]{0.24\textwidth}	 \includegraphics[width=\textwidth,height=0.4\textwidth]{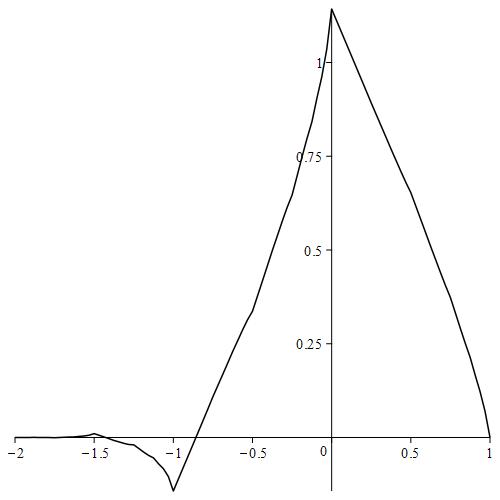}
		\caption{$\phi\in \CH{0}$}
	\end{subfigure}
\caption{
(a) is the graph of the $\frac{1}{3}$-interpolating $2$-refinable function $\phi\in \CH{0}$ in \cref{ex:M2} with the mask $a$ in \eqref{exM2C1} and $t=0$.
(b) is the graph of the $\frac{1}{3}$-interpolating $2$-refinable function $\phi\in \CH{1}$ with the mask $a$ in \eqref{exM2C1} and $t=-\frac{1}{18}$.
(c) is the graph of the $\frac{1}{3}$-interpolating $2$-refinable function $\phi\in \CH{1}$ with the mask $a$ in \eqref{exM2sr4}.
(d) is the graph of the $\frac{1}{7}$-interpolating $2$-refinable function $\phi\in \CH{0}$ in \cref{ex:M2a} with the mask $a$ in \eqref{exM2sr2d7}.
}\label{fig:ex:M2}
\end{figure}

\subsection{The case $m_s>0$ for $s_a$-interpolating $\dm$-refinable functions and $\infty$-step interpolatory subdivision schemes}

For $m_s>0$, we have to employ the general Construction Procedure in Subsection~\ref{subsec:construct} to construct $s_a$-interpolating refinable functions.
Their constructions are often much more complicated and we have to deal with nonlinear equations in \eqref{cond:ms} and \eqref{cond:ns}.

For symmetric masks $a$ satisfying \eqref{symmask}, we must have $s_a=\frac{c_a}{2(\dm-1)}$ in \eqref{ca:sa}.
If $c_a$ is an odd integer and $\dm$ is even,
then $s_a=\frac{c_a}{2(\dm-1)}=\frac{c_a \dm/2}{\dm(\dm-1)}$ satisfies
the condition \eqref{cond:sa} with $m_s=1$ and $n_s=1$. Therefore, according to item (3) of \cref{thm:int}, its subdivision scheme is only $\infty$-step (i.e., limit) interpolatory. We now particularly look at the special case $m_s=n_s=1$. Then the nonlinear equations \eqref{cond:ms} and \eqref{cond:ns} in Construction Procedure with $m_s=n_s=1$ become
\be \label{ms:ns:1}
[a*w](\dm k)=\dm^{-1} \td(k) \quad
\mbox{and}\quad [a*w](\dm(\dm-1)s_a+\dm k)=\dm^{-1} w(k),\qquad k\in \Z.
\ee
The above nonlinear equations in \eqref{ms:ns:1} become linear equations if the solution $w$ to \eqref{w} is unique (which can be always achieved by increasing $J$ in \eqref{w} until it has a unique solution) or the remaining free parameters in the solution $w$ of \eqref{w} are not regarded as unknowns or take preassigned values in advance. For simplicity of presentation, here we only consider $\dm=4$ and symmetric masks.

\begin{example}\label{ex:M4}
\rm
Let $\dm=4$ and $s_a=\frac{c_a}{2(\dm-1)}$ with $c_a=1$. Note that $s_a=\frac{c_a}{2(\dm-1)}=\frac{1}{6}$ satisfies \eqref{cond:sa} with $m_s=n_s=1$.
We consider symmetric masks $a$ with $\fs(a)=[-4,5]$ and $\sr(a,\dm)=J$ with $J=2$. We parameterize $a$ in (S1) by $\tilde{\pa}(z)=(1+z+z^2+z^3)^J \tilde{\pb}(z)$
such that $b=\{t_1, t_2, t_2, t_1\}_{[-4,-1]}$.
Solving the linear equations \eqref{a:lpm}
in (S2) of Construction Procedure, we have $t_1=\frac{1}{32}-t_2$ with the free parameter $t_2$. Because $m_s=1>0$, we have to use (S3') in Construction Procedure. Noting that $l_w=-1$ and $h_w=0$ due to $\mbox{supp}(\phi)=[-\frac{4}{3},\frac{5}{3}]$, we solve the linear equations \eqref{w} of (S3') and we obtain a unique solution $w=\{\frac{1}{2}, \frac{1}{2}\}_{[-1,0]}$. Now the nonlinear equations \eqref{cond:ms} and \eqref{cond:ns} in (S3') (i.e., \eqref{ms:ns:1}) become linear equations.
The linear equations \eqref{ms:ns:1} have a unique solution $t_2=\frac{3}{64}$ and
we obtain a symmetric mask $a\in \lp{0}$ with $\fs(a)=[-4,5]$ and $\sr(a,\dm)=2$:
\[
a=\{-\tfrac{1}{64}, \tfrac{1}{64}, \tfrac{3}{32}, \tfrac{5}{32},\tfrac{1}{4},\tfrac{1}{4},
\tfrac{5}{32},\tfrac{3}{32},\tfrac{1}{64},-\tfrac{1}{64}
\}_{[-4,5]},\qquad b=\{-\tfrac{1}{64}, \tfrac{3}{64}, \tfrac{3}{63}, -\tfrac{1}{64}\}_{[-4,-1]}.
\]
By calculation, we have $\sm_2(a,\dm)\approx 1.419518$.
Using \eqref{sm:est:n} and \eqref{sm:special} with $\gamma_0=-2$, we have $\rho_0(b,\dm)_\infty=\dm |b(\gamma_0)|=\frac{3}{16}$ and
hence, we have $\sm_\infty(a,\dm)=-\log_\dm \rho_0(b,\dm)_\infty=\log_4 \tfrac{16}{3}\approx 1.207519$.
By \cref{thm:int}, its symmetric
$\dm$-refinable function $\phi\in \CH{1}$ must be $\frac{1}{6}$-interpolating and its $\infty$-step interpolatory $\dm$-subdivision scheme must be $\mathscr{C}^1$-convergent.

Next we consider symmetric masks $a$ with $\fs(a)=[-7,8]$ and $\sr(a,\dm)=J$ with $J=3$.
We parameterize $a$ in (S1) by $\tilde{\pa}(z)=(1+z+z^2+z^3)^J \tilde{\pb}(z)$
such that $b=\{t_1, t_2, t_3, t_4, t_3, t_2, t_1\}_{[-7,-1]}$.
Solving the linear equations \eqref{a:lpm} in (S2) of Construction Procedure, we have $t_3=-9t_1-4t_2-\frac{15}{512}$ and $t_4=16t_1+6t_2+\frac{19}{256}$
with the free parameters $t_1, t_2\in \R$. Because $m_s=1>0$, we have to use (S3') in Construction Procedure. Noting that $l_w=-2$ and $h_w=1$ due to $\mbox{supp}(\phi)=[-\frac{7}{3},\frac{8}{3}]$, we solve the linear equations \eqref{w} of (S3') and we obtain
\be \label{w:s}
w=\{-\tfrac{1}{8}-s, \tfrac{3}{4}+3s, \tfrac{3}{8}-3s, s\}_{[-2,1]},
\ee
where $s\in \R$.
Then we use it to further solve the nonlinear equations \eqref{ms:ns:1} in (S3') and obtain a solution
$t_2=6t_1$ and $s=-\tfrac{1}{16}$ with the free parameter $t_1\in \R$.
That is, we now obtain
\[
b=\{t_1, 6t_1, -\tfrac{15}{512}-33t_1, \tfrac{19}{256}+52t_1, -\tfrac{15}{512}-33t_1, 6t_1, t_1\}_{[-7,-1]}.
\]
In fact, if we would use $J=4$ instead of $J=3$ in \eqref{w}, then \eqref{w} has a unique solution in \eqref{w:s} with $s=-\frac{1}{16}$ and then the nonlinear equations \eqref{ms:ns:1} become linear equations, yielding the same solution $t_2=6t_1$. Moreover, up to an integer shift and a multiplicative factor $4$, the above mask $a$ agrees with the mask $A$ reported in \cite[Proposition~3.5]{rom19}.

Optimizing $\sm_2(a,\dm)$ among values of $t_1$ as described in Subsection~\ref{subsec:sm}, we
take $t_1=-\frac{1}{832}$ and
obtain a symmetric mask $a\in \lp{0}$ with the symmetry center $1/2$ and $\sr(a,\dm)=3$ given by
\[
a=\{
-\tfrac{1}{832},
-\tfrac{9}{832},
-\tfrac{123}{6656},
-\tfrac{83}{6656},
\tfrac{141}{6656},
\tfrac{645}{6656},
\tfrac{607}{3328},
\tfrac{807}{3328},
\tfrac{807}{3328},
\tfrac{607}{3328},\tfrac{645}{6656},
\tfrac{141}{6656},-\tfrac{83}{6656},
-\tfrac{123}{6656},
-\tfrac{9}{832},-\tfrac{1}{832}
\}_{[-7,8]}.
\]
By calculation, we have $\sm_2(a,\dm)\approx 2.264759$ and $\sm_\infty(a,\dm)\ge 2.132628$ using \eqref{sminfty} with $n=2$.
By \cref{thm:int}, its symmetric
$\dm$-refinable function $\phi\in \CH{2}$ must be $\frac{1}{6}$-interpolating and its $\infty$-step interpolatory subdivision scheme must be $\mathscr{C}^2$-convergent.

Finally, we consider symmetric masks $a$ with $\fs(a)=[-12,13]$ and $\sr(a,\dm)=J$ with $J=5$.
We parameterize $a$ in (S1) by $\tilde{\pa}(z)=(1+z+z^2+z^3)^J \tilde{\pb}(z)$
such that $b=\{t_1, t_2, t_3, t_4, t_5, t_6, t_5, t_4, t_3, t_2, t_1\}_{[-12,-2]}$.
Solving the linear equations \eqref{a:lpm} in (S2) of Construction Procedure, we have
\[
t_4=\tfrac{715}{131072}-50t_1-20t_2-6t_3,\quad
t_5=-\tfrac{815}{32768}+175t_1+64t_2+15t_3,\quad
t_6=\tfrac{2609}{65536}-252t_1-90t_2-20t_3
\]
with the free parameters $t_1, t_2, t_3$. Because $m_s=1>0$, we have to use (S3') in Construction Procedure. Noting that $l_w=-4$ and $h_w=3$ due to $\mbox{supp}(\phi)=[-4,\frac{13}{3}]$, we solve the linear equations \eqref{w} of (S3') and we obtain
\begin{align*}
w=\{
&s_1, s_2, s_3, \tfrac{35}{128}-35s_1-15s_2-5s_3,
\tfrac{35}{32}+105s_1+40s_2+10s_3,
-\tfrac{35}{64}-126s_1-45s_2-10s_3,\\
&\qquad \tfrac{7}{32}+70s_1+24s_2+5s_3, -\tfrac{5}{128}-15s_1-5s_2-s_3\}_{[-4,3]},
\end{align*}
where $s_1,s_2,s_3\in \R$.
Then we use it to further solve the nonlinear equations \eqref{cond:ms} and \eqref{cond:ns} in (S3') and obtain three solution families. The solution with the simplest expresses is given by
\[
t_2=\tfrac{10}{3}t_1,\quad t_3=\tfrac{2145}{3670016}+\tfrac{55}{3}t_1,\quad s_1=0,\quad
s_2=\tfrac{3}{256},\quad
s_3=-\tfrac{25}{256}
\]
with $t_1\in \R$.
Optimizing $\sm_2(a,\dm)$ among values of $t_1$ as described in Subsection~\ref{subsec:sm}, we
take $t_1=\frac{103}{3670016}$ and
obtain a symmetric mask $a\in \lp{0}$ with the symmetry center $1/2$ and $\sr(a,\dm)=5$ such that $a|_{[1,13]}$ is given by
\begin{align*}
\{
&\tfrac{745}{3072},
\tfrac{343905}{1835008},
\tfrac{188627}{1835008},
\tfrac{284335}{11010048},
-\tfrac{183955}{11010048},
-\tfrac{101845}{3670016},
-\tfrac{65735}{3670016},
-\tfrac{10793}{2752512},
\tfrac{5585}{2752512},
\tfrac{12725}{3670016},
\tfrac{7295}{3670016},
\tfrac{2575}{11010048},
\tfrac{103}{3670016}\}_{[1,13]}
\end{align*}
with $\sm_2(a,\dm)\approx 3.109024$ and $\sm_\infty(a,\dm)\ge 2.873247$ using \eqref{sminfty} with $n=6$.
By \cref{thm:int}, its
$\dm$-refinable function $\phi\in \CH{2}$ must be $\frac{1}{6}$-interpolating and its $\infty$-step interpolatory $\dm$-subdivision scheme must be $\mathscr{C}^2$-convergent.
See \cref{fig:ex:M4} for graphs of the $s_a$-interpolating $\dm$-refinable functions $\phi$.
\end{example}

Finally, combining \cref{thm:int,thm:qs} (more precisely, see \cref{cor:qs}),
we present an example of $s_a$-interpolating $4$-refinable function using $2$-mask quasi-stationary subdivision schemes.

\begin{example}\label{ex:M2:M4}
\rm
Let $a_1, a_2\in \lp{0}$ be symmetric dyadic masks with $\sr(a_1,2)=\sr(a_2,2)=J$ with $J=3$ as follows:
\begin{align*}
&\widetilde{\pa_1}(z)=\tfrac{1}{8}z^{-1}(1+z)^3
(t_1 z^{-1}+1-2t_1+t_1z),\\
&\widetilde{\pa_2}(z)=\tfrac{1}{8}z^{-2}(1+z)^3
(t_3z^{-2}+t_2 z^{-1}+1-2t_2-2t_3+t_2z+t_3 z^2),
\end{align*}
Let $\dm:=4$ and define a new mask $a\in \lp{0}$ by $\tilde{\pa}(z):=\widetilde{\pa_1}(z^2)\widetilde{\pa_2}(z)$.
Then $\sr(a,\dm)=3$, $\fs(a)=[-8,9]$ and $a$ is symmetric about the point $1/2$.
Applying Construction Procedure and solving nonlinear equations, we obtain a solution given by
\[
t_2=t(1088t_1^2 + 510t_1 - 15),\quad
t_3=-t(64t_1^4 +62t_1^3 +271t_1^2 + 128t_1),
\]
where $t:=(64t_1^3 + 32t_1^2 - 16t_1 + 8)^{-1}$, and
\[
w=\tfrac{1}{16-32t_1}
\{2t_1^3+2t_1^2,
-6t_1^3 - 3t_1^2 + 2t_1 - 1,
4t_1^3 + 2t_1^2 - 18t_1 + 9,
4t_1^3 + 2t_1^2 - 18t_1 + 9,
-6t_1^3 - 3t_1^2 + 2t_1 - 1,
2t_1^3 + t_1^2\}_{[-3,2]},
\]
where  $t_1\in \R$ is a free parameter. Optimizing $\sm_2(a,\dm)$ and selecting $t_1=-\frac{65}{128}$, we have $\sm_2(a,\dm)\approx 2.380804$ and
$\sm_\infty(a,\dm)\ge 2.205219$ using \eqref{sminfty} with $n=3$. Explicitly, for $t_1=-\frac{65}{128}$, we have
\begin{align*}
&a_1=\tfrac{1}{1024}\{-65, 63, 514, 514, 63,-65\}_{[-2,3]},\\
&a_2=\tfrac{1}{536738816}
\{
-4280965,
14764145,
90418427,
167467801,
167467801,
90418427,
14764145,
-4280965\}_{[-4,3]}.
\end{align*}
By \cref{thm:int,thm:qs} or \cref{cor:qs}, the $\dm$-refinable function $\phi\in \CH{2}$ must be $\frac{1}{6}$-interpolating and its $\infty$-step interpolatory $2$-mask quasi-stationary $2$-subdivision scheme using masks $\{a_1,a_2\}$ must be $\mathscr{C}^2$-convergent.
See \cref{fig:ex:M4} for the graph of the $\frac{1}{6}$-interpolating $\dm$-refinable function $\phi$.
\end{example}

\begin{figure}[htbp]
	\centering
\begin{subfigure}[b]{0.3\textwidth} \includegraphics[width=\textwidth,height=0.4\textwidth]
{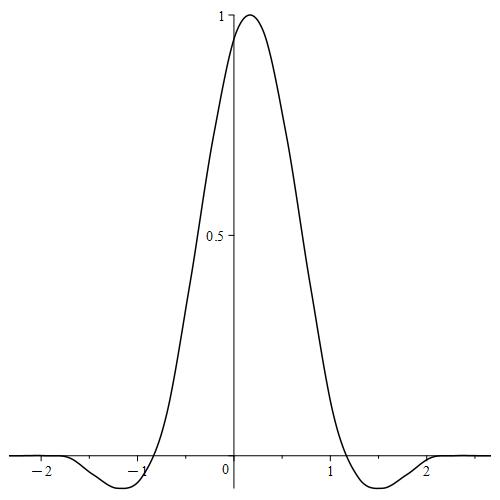} \caption{$\phi\in\CH{2}$}
	\end{subfigure}
\begin{subfigure}[b]{0.3\textwidth}	 \includegraphics[width=\textwidth,height=0.4\textwidth]
{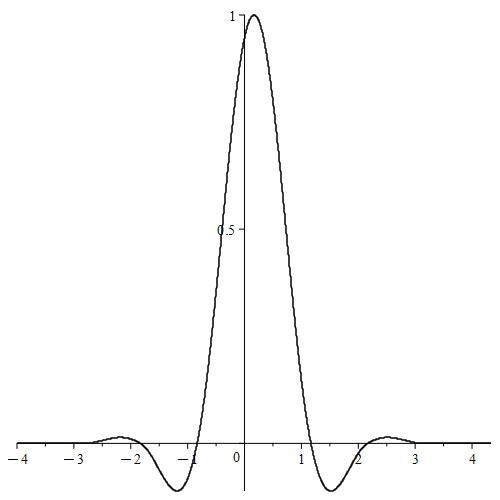}
		\caption{$\phi\in \CH{2}$}
\end{subfigure}
\begin{subfigure}[b]{0.3\textwidth}	 \includegraphics[width=\textwidth,height=0.4\textwidth]
{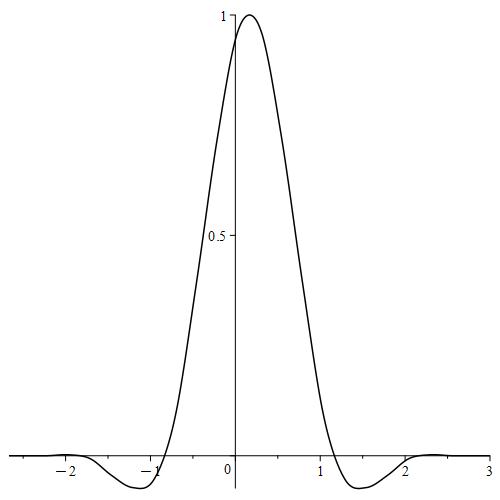}
		\caption{$\phi\in \CH{2}$}
\end{subfigure}
	 \begin{subfigure}[b]{0.3\textwidth} \includegraphics[width=\textwidth,height=0.4\textwidth]
{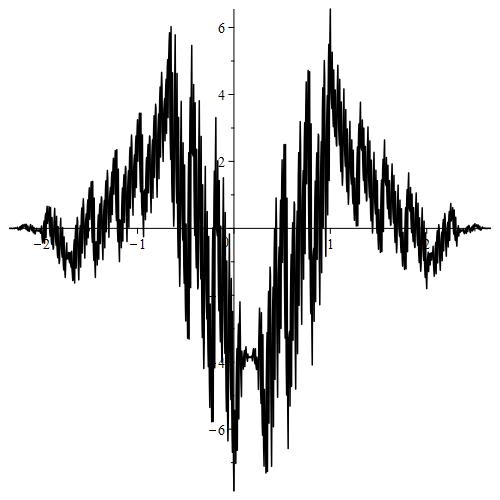} \caption{$\phi''$}
	\end{subfigure}
\begin{subfigure}[b]{0.3\textwidth}	 \includegraphics[width=\textwidth,height=0.4\textwidth]
{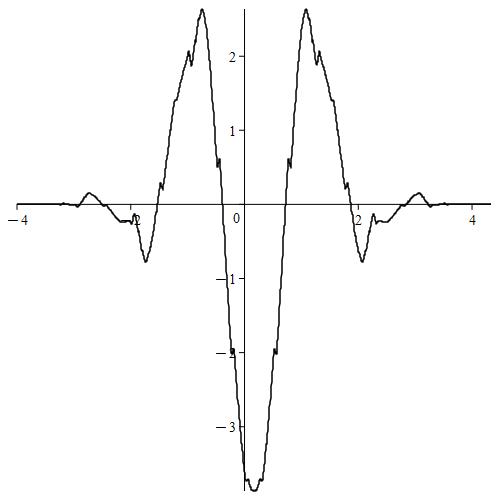}
		\caption{$\phi''$}
	\end{subfigure}
\begin{subfigure}[b]{0.3\textwidth}	 \includegraphics[width=\textwidth,height=0.4\textwidth]
{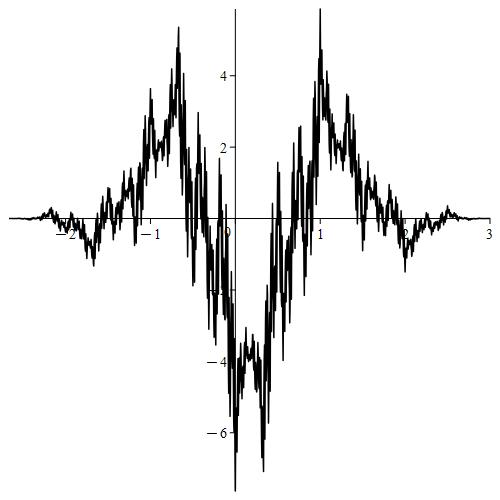}
		\caption{$\phi''$}
	\end{subfigure}
\caption{
(a) is the graph of the $\frac{1}{6}$-interpolating $4$-refinable function $\phi\in \CH{2}$ in \cref{ex:M4} with the mask $a$ satisfying $\sr(a,4)=3$.
(d) is the graph of $\phi''$ in (a).
(b) is the graph of the $\frac{1}{6}$-interpolating $4$-refinable function $\phi\in \CH{2}$ in \cref{ex:M4} with the mask $a$ satisfying $\sr(a,4)=5$.
(e) is the graph of $\phi''$ in (b).
(c) is the graph of the $\frac{1}{6}$-interpolating $4$-refinable function $\phi\in \CH{2}$ with $2$-mask quasi-stationary $2$-subdivision scheme
in \cref{ex:M2:M4} with $t_1=-\frac{65}{128}$. (f) is the graph of $\phi''$ in (c).
}\label{fig:ex:M4}
\end{figure}

\subsection{Application to subdivision curves}\label{subsec:app}

We first explain the rule of $s_a\in \R$ from the perspective of subdivision curves in CAGD. Let $\dm\in \N\bs\{1\}$ be a dilation factor and $a\in \lp{0}$ be a mask. Given an initial control polygonal $v=(v_x, v_y, v_z): \Z \rightarrow \R^3$ in the Euclidean space $\R^3$. That is, the initial control polygonal is given by the points $(v_x(k), v_y(k), v_z(k))$ in $\R^3$ for $k\in \Z$ which are connected in a natural way.
The subdivision scheme is applied componentwise to the vector sequence $v$ to produce finer and finer subdivided curves consisting of points $\{ (\sd_{a,\dm}^n v_x, \sd_{a,\dm}^n v_y, \sd_{a,\dm}^n v_z)\}_{n=1}^\infty$. As $n$ goes to $\infty$, one obtains a subdivision curve in $\R^3$. Obviously, no function expressions are explicitly given by the subdivision procedure to describe the limit subdivision curve in $\R^3$. To analyze the convergence and smoothness of the limit curve, it is necessary to find parametric expressions for the limit curve. The most natural way is to consider the subdivision scheme acting on a special initial control polygon in $\R^2$: $(v_0, v): \Z \rightarrow \R^2$ with $v_0(k):=k$ for $k\in \Z$ and $v$ being one of the component sequences $v_x, v_y, v_z$.
Then we obtain the following sequence of subdivision point data:
\be \label{v0:subdiv}
(\sd_{a,\dm}^n v_0, \sd_{a,\dm}^n v) \in \R^2,\qquad n\in \N.
\ee
Suppose now that the mask $a$ has at least order $2$ sum rules with respect to the dilation factor $\dm$, i.e., $\sr(a,\dm)\ge 2$. Define a special linear polynomial $\pp(x):=x$ for $x\in \R$. Then $v_0(k)=\pp(k)$ and $\pp'=1$ for all $k\in \Z$.
It is known in \cite[(2.20)]{han03} (also see \eqref{sr:poly} in this paper) that
\[
[\sd_{a,\dm} v_0](k)=[\sd_{a,\dm}\pp](k)
=\pp(\dm^{-1}k) \sum_{k\in \Z} a(k)-\dm^{-1} \pp'(\dm^{-1}k)\sum_{k\in \Z} k a(k)
=\dm^{-1} k-\dm^{-1} m_a,
\]
where we used $\sum_{k\in \Z} a(k)=1$ and $m_a:=\sum_{k\in \Z} k a(k)$.
Consequently, by induction we conclude from the above identity that
\be \label{sd:ma}
[\sd_{a,\dm}^n v_0](k)=\dm^{-n}k -\dm^{-n}m_a-\dm^{1-n}m_a-\cdots-\dm^{-1} m_a
=\dm^{-n} k-\frac{1-\dm^{-n}}{\dm-1} m_a.
\ee
Hence, the second component $[\sd_{a,\dm}^n v](k)$ for $k\in \Z$ in \eqref{v0:subdiv} is associated with the first component $[\sd_{a,\dm}^n v_0](k)$, which is just the value $\dm^{-n} k-\frac{1-\dm^{-n}}{\dm-1} m_a$.
Let $\eta_v$ be the limit function in \cref{def:sd} with $m=0$ for $\mathscr{C}^m$-convergence. Take $t:=\dm^{-n_0} k_0$ with $n_0\in \NN$ and $k_0\in \Z$. By $t=\dm^{-n} (\dm^{n-n_0} k_0)$ and $\dm^{n-n_0}k_0\in \Z$ for $n\ge n_0$,
we observe from \eqref{converg} with $j=0$ in \cref{def:sd} that
\[
\eta_v(t)=\lim_{n\to\infty} \eta_v(\dm^{-n}(\dm^{n-n_0} k_0))
=\lim_{n\to\infty} [\sd_{a,\dm}^n v](\dm^{n-n_0} k_0).
\]
Because the first component of the point
$([\sd_{a,\dm}^n v_0](\dm^{n-n_0}k_0), [\sd_{a,\dm}^n v](\dm^{n-n_0}k_0))$ in the subdivision data is $[\sd_{a,\dm}^n v_0](\dm^{n-n_0}k_0)$, which is equal to
$\dm^{-n}[ \dm^{n-n_0}k_0]-\frac{1-\dm^{-n}}{\dm-1} m_a=t-\frac{1-\dm^{-n}}{\dm-1} m_a$ by \eqref{sd:ma}, we conclude that its first component of the subdivided curve goes to
\[
\lim_{n\to\infty} [\sd_{a,\dm}^n v_0](\dm^{n-n_0} k_0)=
\lim_{n\to\infty} \left(t-\tfrac{1-\dm^{-n}}{\dm-1} m_a\right)=t-s_a \quad \mbox{with}\quad s_a:=\frac{m_a}{\dm-1}.
\]
In other words, we must associate the subdivision data $[\sd_{a,\dm}^n v](k)$ with the reference/parameter point $[\sd_{a,\dm}^n v_0](k)$ in \eqref{sd:ma} on the real line $\R$, i.e., the point $\dm^{-n}-\frac{1-\dm^{-n}}{\dm-1} m_a=\dm^{-n}(k-s_a)-s_a$.
Because $\cup_{n_0=0}^\infty \cup_{k_0\in \Z} \dm^{-n_0} k_0$ is dense in $\R$,
this naturally creates a parametric equation $x=t-s_a, y=\eta_v(t)$ in $\R^2$ for the subdivision curve with the initial polygon $\{(v_0(k), v(k))\}_{k\in \Z}$. By a change of variables, the limit two-dimensional subdivision curve is described by the parametric equation in $\R^2$:
\[
x=t,\quad y=\eta_v(s_a+t), \qquad t\in \R.
\]
Now for the subdivision curve in $\R^3$ generated from the initial control curve $\{(v_x(k), v_y(k), v_z(k))\}_{k\in \Z}$, a parametric equation for the limit subdivision curve is just given by $x=\eta_{v_x}(s_a+t), y=\eta_{v_y}(s_a+t), z=\eta_{v_z}(s_a+t)$ for $t\in \R$.
If the subdivision scheme interpolates the initial sequence $\{(v_0(k), v(k)) \}_{k\in \Z}$, then for $t=k\in \Z$, we must have $\eta(s_a+k)=v(k)$ for all $k\in \Z$. In particular, for $v=\td$, we have $\phi(s_a+k)=\eta_{\td}(k)=\td(k)$. This explains $s_a=\frac{m_a}{\dm-1}$ with $m_a=\sum_{k\in \Z} k a(k)$, which also agrees with \eqref{phi:sa} in \cref{prop:lpm}.
Moreover, if a mask $a$ has the symmetry $a(c_a-k)=a(k)$ for all $k\in \Z$ with $c_a\in \Z$, then we already explained after the proof of \cref{prop:lpm} that $s_a=\frac{m_a}{\dm-1}$ with $m_a=\sum_{k\in \Z} k a(k)$, which also agrees with our discussion of $s_a$ from the perspective in CAGD. Consequently, regardless whether $c_a$ is an even integer for primal subdivision schemes or $c_a$ is an odd integer for dual subdivision schemes, because we explained the role of $s_a$ above without requiring a symmetric mask $a$,
there are no essential differences between primal and dual subdivision schemes.
In summary, the subdivided data $[\sd_{a,\dm}^n v](k)$ for $k\in \Z$ must be naturally associated with the parameter point $[\sd_{a,\dm}^n v_0](k)$, i.e., the point $\dm^{-n}(k-s_a)-s_a$ with $s_a:=\frac{m_a}{\dm-1}$ and $m_a:=\sum_{k\in \Z} k a(k)$.

Finally, see \cref{fig:subcurve,fig:qssubcurve} for some examples of subdivision curves
by applying our constructed interpolatory (quasi)-stationary subdivision schemes to produce some simple subdivision curves.

\begin{figure}[bhtp]
	\centering
\begin{subfigure}[b]{0.24\textwidth} \includegraphics[width=0.9\textwidth,height=0.8\textwidth]
{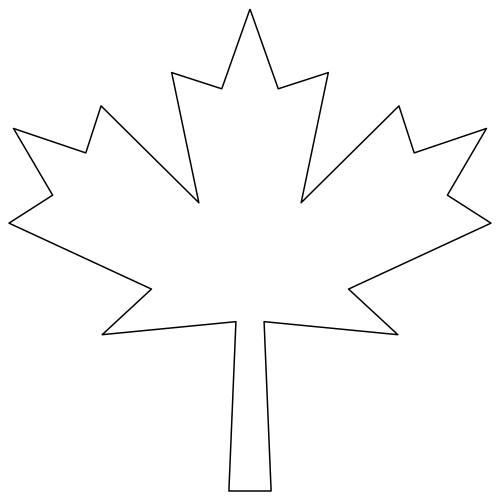} \caption{Initial Polygon}
	\end{subfigure}
	\begin{subfigure}[b]{0.24\textwidth} \includegraphics[width=0.9\textwidth,height=0.8\textwidth]
{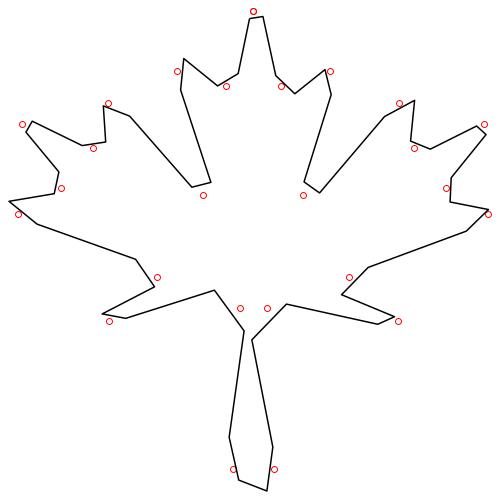} \caption{\cref{ex:M2}, L1}
	\end{subfigure} \begin{subfigure}[b]{0.24\textwidth}	 \includegraphics[width=0.9\textwidth,height=0.8\textwidth]
{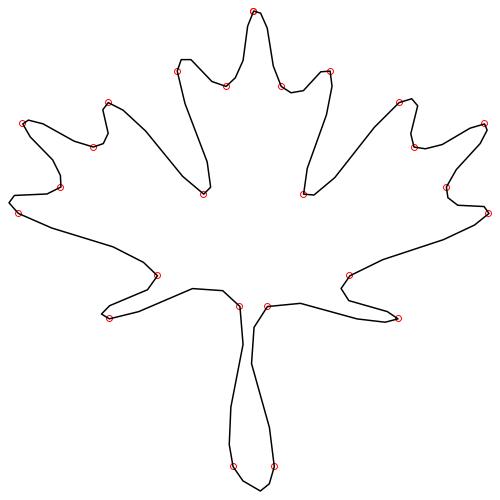}
		\caption{\cref{ex:M2}, L2}
\end{subfigure}
\begin{subfigure}[b]{0.24\textwidth}	 \includegraphics[width=0.9\textwidth,height=0.8\textwidth]
{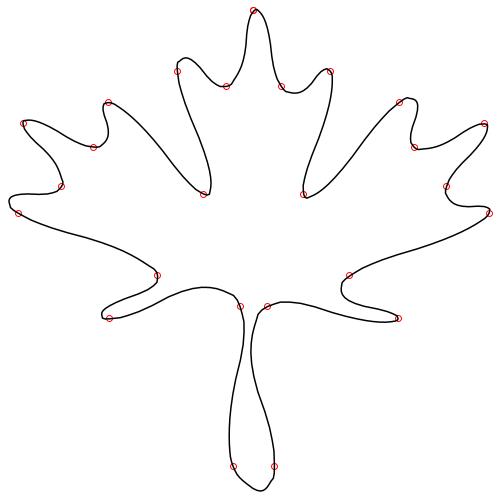}
		\caption{\cref{ex:M2}, L4}
	\end{subfigure}\\
\begin{subfigure}[b]{0.24\textwidth} \includegraphics[width=0.9\textwidth,height=0.8\textwidth]
{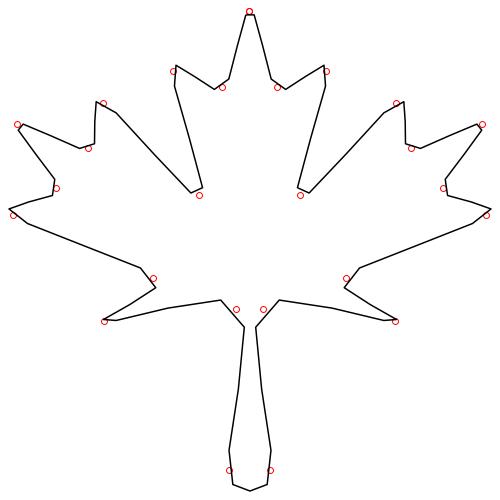} \caption{\cref{ex:M3}, L1}
	\end{subfigure}
	 \begin{subfigure}[b]{0.24\textwidth} \includegraphics[width=0.9\textwidth,height=0.8\textwidth]
{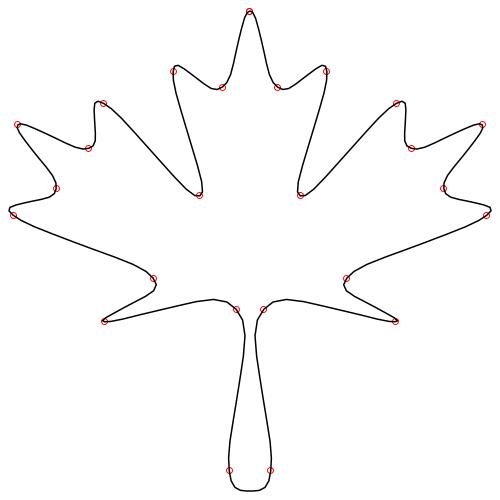} \caption{\cref{ex:M3}, L2}
	\end{subfigure} \begin{subfigure}[b]{0.24\textwidth}	 \includegraphics[width=0.9\textwidth,height=0.8\textwidth]
{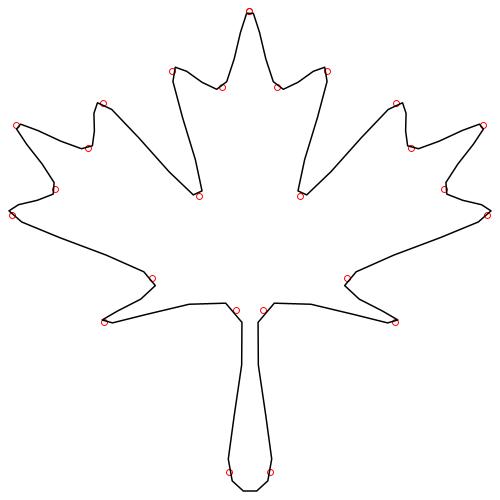}
		\caption{\cref{ex:M4}, L1}
\end{subfigure}
\begin{subfigure}[b]{0.24\textwidth}	 \includegraphics[width=0.9\textwidth,height=0.8\textwidth]
{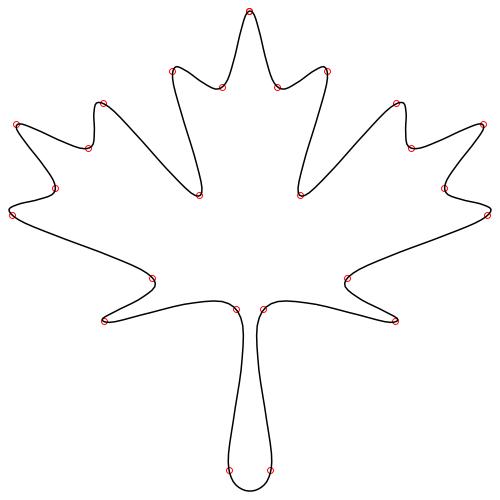}
		\caption{\cref{ex:M4}, L2}
	\end{subfigure}
\caption{
(a) is the initial control polygon in $\R^2$. The red dots in (b)-(h) indicate the vertices of the initial control polygon in (a) for illustrating the $n_s$-step interpolation property.
(b)--(d) are subdivision curves with levels $1,2,4$ using the $\mathscr{C}^1$-convergent $2$-step interpolatory $2$-subdivision scheme  with mask $a$  in \eqref{exM2C1} of \cref{ex:M2} with $t=-\frac{1}{18}$.
(e)--(f) are subdivision curves with levels $1,2$ using the $\mathscr{C}^2$-convergent $2$-step interpolatory $3$-subdivision scheme with mask $a$ in \cref{ex:M3} with $\sr(a,3)=3$ and $t_1=\frac{1}{432}$.
(g)--(h) are subdivision curves with levels $1,2$ using the $\mathscr{C}^2$-convergent $\infty$-step interpolatory $4$-subdivision scheme with mask $a$ in \cref{ex:M4} with $\sr(a,4)=3$ and $t_1=-\frac{1}{832}$.
}\label{fig:subcurve}
\end{figure}

\begin{figure}[htbp]
	\centering
\begin{subfigure}[b]{0.24\textwidth} \includegraphics[width=0.9\textwidth,height=0.8\textwidth]
{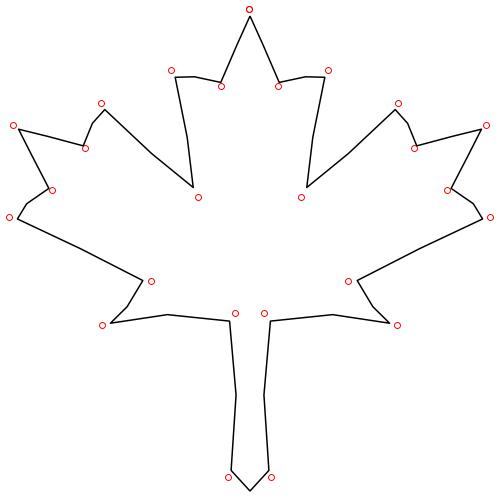} \caption{\cref{ex2}, L1}
	\end{subfigure}
	 \begin{subfigure}[b]{0.24\textwidth} \includegraphics[width=0.9\textwidth,height=0.8\textwidth]
{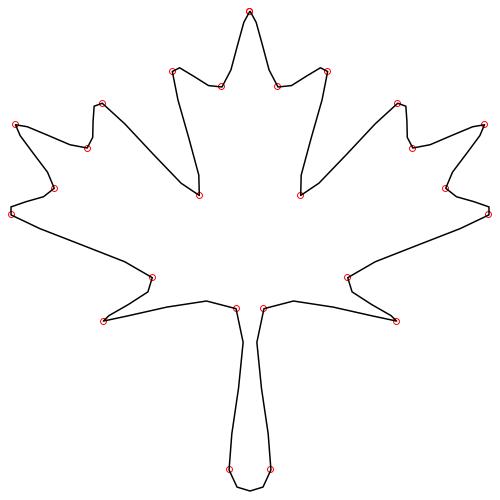} \caption{\cref{ex2}, L2}
	\end{subfigure} \begin{subfigure}[b]{0.24\textwidth}	 \includegraphics[width=0.9\textwidth,height=0.8\textwidth]
{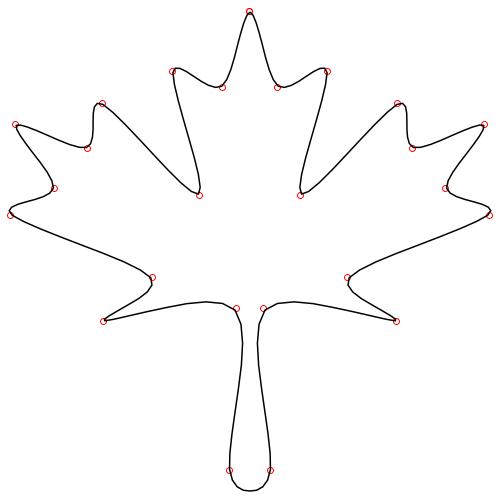}
		\caption{\cref{ex2}, L3}
\end{subfigure}
\begin{subfigure}[b]{0.24\textwidth}	 \includegraphics[width=0.9\textwidth,height=0.8\textwidth]
{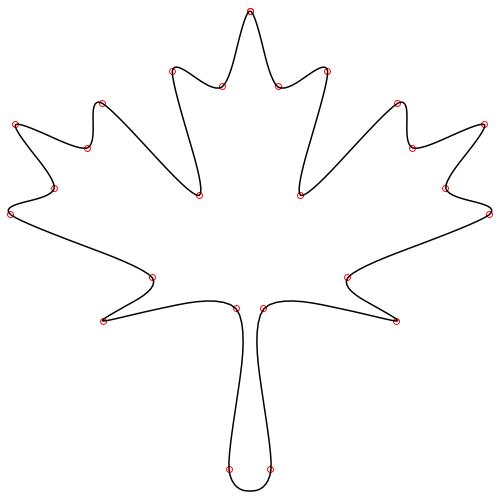}
		\caption{\cref{ex2}, L4}
\end{subfigure}\\
\begin{subfigure}[b]{0.24\textwidth} \includegraphics[width=0.9\textwidth,height=0.8\textwidth]
{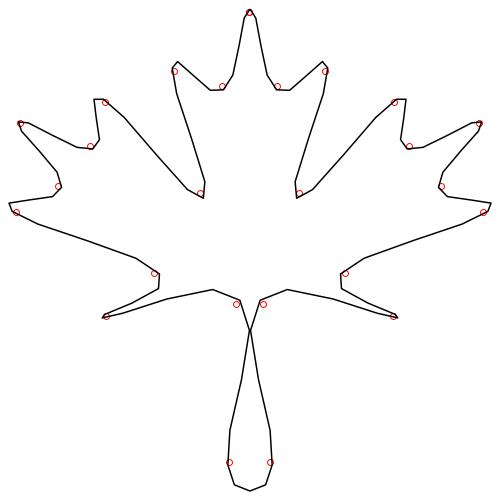} \caption{\cref{ex3}, L2}
	\end{subfigure}
	 \begin{subfigure}[b]{0.24\textwidth} \includegraphics[width=0.9\textwidth,height=0.8\textwidth]
{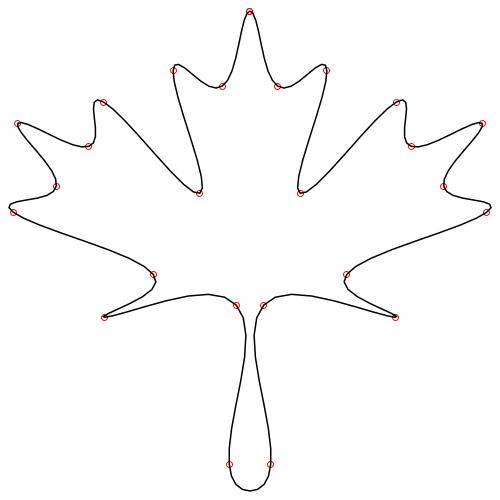} \caption{\cref{ex3}, L3}
	\end{subfigure} \begin{subfigure}[b]{0.24\textwidth}	 \includegraphics[width=0.9\textwidth,height=0.8\textwidth]
{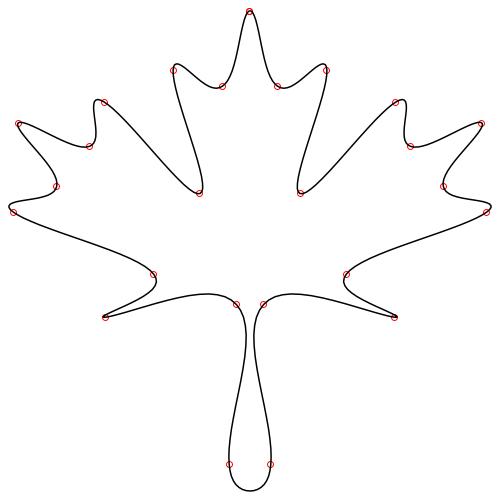}
		\caption{\cref{ex3}, L4}
\end{subfigure}
\begin{subfigure}[b]{0.24\textwidth}	 \includegraphics[width=0.9\textwidth,height=0.8\textwidth]
{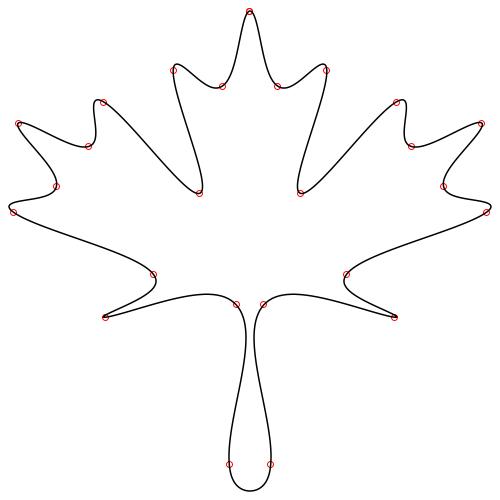}
		\caption{\cref{ex3}, L5}
	\end{subfigure}\\
\begin{subfigure}[b]{0.24\textwidth} \includegraphics[width=0.9\textwidth,height=0.8\textwidth]
{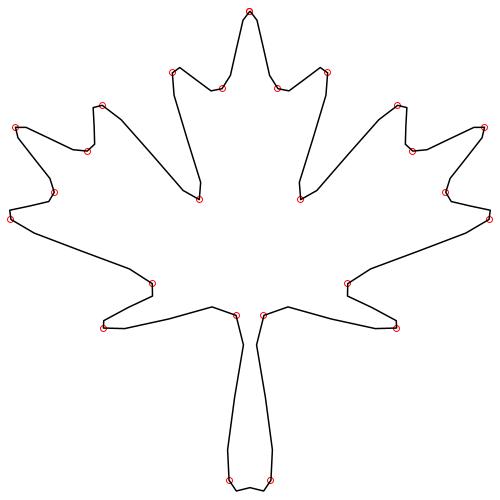} \caption{\cref{ex1}, L2}
	\end{subfigure}
	\begin{subfigure}[b]{0.24\textwidth} \includegraphics[width=0.9\textwidth,height=0.8\textwidth]
{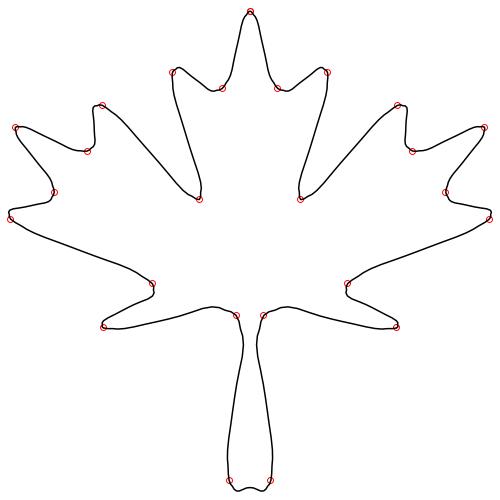} \caption{\cref{ex1}, L4}
	\end{subfigure} \begin{subfigure}[b]{0.24\textwidth}	 \includegraphics[width=0.9\textwidth,height=0.8\textwidth]
{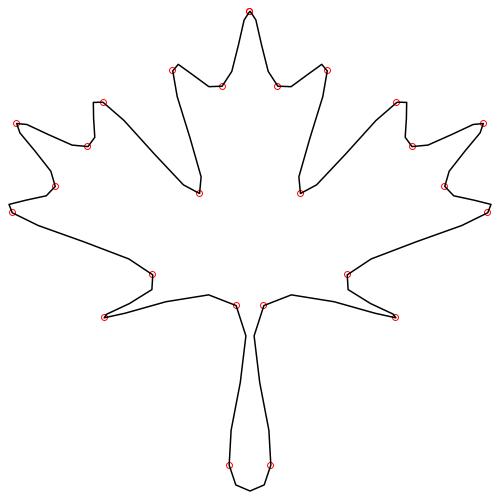}
		\caption{\cref{ex2}, L2}
\end{subfigure}
\begin{subfigure}[b]{0.24\textwidth}	 \includegraphics[width=0.9\textwidth,height=0.8\textwidth]
{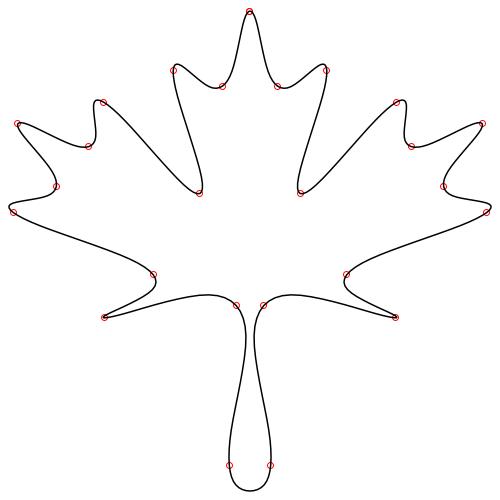}
		\caption{\cref{ex2}, L4}
	\end{subfigure}
\caption{
(a)--(d) are subdivision curves with levels $1,\ldots,4$ using the $\mathscr{C}^2$-convergent $2$-step interpolatory $2$-mask quasi-stationary $2$-subdivision scheme in \cref{ex2} with masks $\{a_1,a_2\}$ (having two-ring stencils) in \eqref{a1a2} using parameters in \eqref{tval1}.
(e)--(h) are subdivision curves with levels $2,\ldots,5$ using the $\mathscr{C}^3$-convergent $3$-step interpolatory $3$-mask quasi-stationary $2$-subdivision scheme in \cref{ex3} with masks $\{a_1,a_2, a_3\}$ having two-ring stencils.
(i)--(j) are subdivision curves with levels $2,4$ using the $\mathscr{C}^1$-convergent $2$-step interpolatory $2$-subdivision scheme in \cref{ex1} with mask $\{a_1, a_2\}$ having one-ring stencils.
(k)--(1) are subdivision curves with levels $2,4$ using the $\mathscr{C}^2$-convergent $2$-step interpolatory $2$-subdivision scheme with mask $a$ in \cref{ex2} using parameters in \eqref{tval2} and having two-ring stencils.
}\label{fig:qssubcurve}
\end{figure}

\section{Proof of \cref{thm:int}}
\label{sec:proof}

In this section we shall prove \cref{thm:int}, whose proof is critically built on two ingredients: the eigenvalues of shifted transition operators $\mtz_{a,\dm,\gamma}$ below and the structure of the number $s_a$ satisfying the condition \eqref{cond:sa}.
We already addressed the roles of $s_a$ satisfying the condition \eqref{cond:sa} in Subsection~\ref{subsec:sa}.
To prove \cref{thm:int}, we further need some auxiliary results about the structure of shifted transition operators $\mtz_{a,\dm,\gamma}$.

For $a \in \lp{0}$ and $\gamma\in \Z$, we define a shifted transition operator
$\mtz_{a,\dm,\gamma}: \lp{0}\rightarrow \lp{0}$ by
\be \label{tz}
[\mtz_{a,\dm,\gamma} v](n):=\dm \sum_{k\in\Z} a(k) v(\gamma+\dm n-k),\qquad n\in \Z, v\in \lp{0}.
\ee
We first study the eigenvalues of $\mtz_{a,\dm,\gamma}$ acting on the linear space $\lp{0}$. By $\mbox{spec}(\mtz_{a,\dm,\gamma})$ we denotes the multiset of all the eigenvalues of $\mtz_{a,\dm,\gamma}$ counting the multiplicity of nonzero eigenvalues of $\mtz_{a,\dm,\gamma}$.
We now study some properties of $\mtz_{a,\dm,\gamma}$ by generalizing the corresponding results in \cite{hanbook,hj98} for $\dm=2$.

\begin{lemma}\label{lem:tz}
Let $\dm\in \N\bs\{1\}$ be a dilation factor.
Let $a\in \lp{0}$ and $\gamma\in \Z$.
\begin{enumerate}
\item[(1)] $\mtz_{a,\dm,\gamma} \ell(K_{a,\gamma})\subseteq \ell(K_{a,\gamma})$, where $\ell(K_{a,\gamma})$ is the space of all sequences $v\in \lp{0}$ such that $v$ is supported inside $\Z\cap K_{a,\gamma}$ with $K_{a,\gamma}:=(\dm-1)^{-1}[\fs(a)-\gamma]$.
\item[(2)] If $\mtz_{a,\dm,\gamma} v=\gl v$ for some $\gl\in \C\bs \{0\}$ and $v\in \lp{0}$, then $v\in \ell(K_{a,\gamma})$.
\item[(3)] $\mtz_{a,\dm,\gamma+\dm j}(v(\cdot+m))=[\mtz_{a,\dm,\gamma+m} v](\cdot+j)$ for all $j,m\in \Z$. Hence, $\mbox{spec}(\mtz_{a,\dm,\gamma+(\dm-1)j})=
\mbox{spec}(\mtz_{a,\dm,\gamma})$ for all $j\in \Z$.
\end{enumerate}
\end{lemma}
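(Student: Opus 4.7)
\medskip

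\noindent\textbf{Proof proposal.} All three parts amount to bookkeeping with supports, so my plan is to set $[l_a,h_a]:=\fs(a)$ once and for all and then track how $\mtz_{a,\dm,\gamma}$ moves the support of a sequence. For part~(1), given $v\in \ell(K_{a,\gamma})$ supported in $[l_v,h_v]\subseteq K_{a,\gamma}$, I would note that $[\mtz_{a,\dm,\gamma} v](n)\ne 0$ forces the existence of $k\in [l_a,h_a]$ with $\gamma+\dm n-k\in [l_v,h_v]$, which bounds $n$ between $(l_v-\gamma+l_a)/\dm$ and $(h_v-\gamma+h_a)/\dm$. Plugging in $l_v\ge (l_a-\gamma)/(\dm-1)$ and $h_v\le (h_a-\gamma)/(\dm-1)$ and using that $(l_a-\gamma)/\dm+(l_a-\gamma)/(\dm(\dm-1))=(l_a-\gamma)/(\dm-1)$ gives the required containment $\supp(\mtz_{a,\dm,\gamma}v)\subseteq K_{a,\gamma}$.

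For part~(2), I would apply exactly the same support-propagation bound to an eigenvector $v$ with $\mtz_{a,\dm,\gamma}v=\gl v$ and $\gl\ne 0$. Since $\mtz_{a,\dm,\gamma}v$ has the same support as $v$, the bounds become self-referential: writing $\supp(v)\subseteq [l_v,h_v]$, we obtain $l_v\ge (l_v-\gamma+l_a)/\dm$ and $h_v\le (h_v-\gamma+h_a)/\dm$, and solving each inequality yields precisely $l_v\ge (l_a-\gamma)/(\dm-1)$ and $h_v\le (h_a-\gamma)/(\dm-1)$, i.e.\ $v\in \ell(K_{a,\gamma})$. This is the step I expect to be the main obstacle, as one has to be careful that the inequalities can indeed be solved without extra information (this is where $\gl\ne 0$ enters, since otherwise $v$ would just equal $0$ inside $\mtz$'s image and no equality of supports is implied).

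For part~(3), I would just compute directly: applying the definition, both
\[
[\mtz_{a,\dm,\gamma+\dm j}(v(\cdot+m))](n)=\dm\sum_{k\in\Z}a(k)v(\gamma+m+\dm(n+j)-k)
\]
and $[\mtz_{a,\dm,\gamma+m}v](n+j)$ expand to the same sum, so the identity is immediate. The spectral consequence then follows by specializing $m=j$, which rewrites the identity as $\mtz_{a,\dm,\gamma+(\dm-1)j}\circ S_j=S_j\circ \mtz_{a,\dm,\gamma}$, where $S_j:\lp{0}\to\lp{0}$ is the invertible shift $u\mapsto u(\cdot+j)$. Thus $\mtz_{a,\dm,\gamma+(\dm-1)j}$ and $\mtz_{a,\dm,\gamma}$ are similar via $S_j$, and since by parts~(1)--(2) their nonzero spectra are the (equal) spectra of their restrictions to the finite-dimensional spaces $\ell(K_{a,\gamma+(\dm-1)j})$ and $\ell(K_{a,\gamma})$, we conclude $\mbox{spec}(\mtz_{a,\dm,\gamma+(\dm-1)j})=\mbox{spec}(\mtz_{a,\dm,\gamma})$ as multisets.
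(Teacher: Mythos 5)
Your proposal is correct and follows essentially the same route as the paper: support bookkeeping for parts (1) and (2), and a direct computation plus a shift-similarity argument for part (3). The only cosmetic difference is in (2), where you resolve the self-referential support bound by solving the endpoint inequalities $(\dm-1)l_v\ge l_a-\gamma$ and $(\dm-1)h_v\le h_a-\gamma$ in one step, while the paper iterates the inclusion $\fs(v)\subseteq \dm^{-1}\fs(v)+\dm^{-1}[\fs(a)-\gamma]$ and sums the geometric series — these are the same argument.
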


\bp (1) Recall that $\fs(a)$ is the smallest interval such that $a(k)=0$ for all $k\in \Z\bs \fs(a)$.
Let $v\in \ell(K_{a,\gamma})$. Then $[\mtz_{a,\dm,\gamma} v](n)\ne 0$ only if $a(k)v(\gamma+Mn-k)\ne 0$ for some $k\in \Z$, from which we have $k\in \fs(a)$ and $\gamma+\dm n-k\in K_{a,\gamma}$, which implies $n\in \dm^{-1}[\fs(a)+K_{a,\gamma}-\gamma]$. By the definition of $K_{a,\gamma}$, we observe
\begin{align*}
\dm^{-1}[\fs(a)+K_{a,\gamma}-\gamma]
&=\dm^{-1}[\fs(a)+(\dm-1)^{-1}\fs(a)-(\dm-1)^{-1}\gamma-\gamma]\\
&=(\dm-1)^{-1} \fs(a)-(\dm-1)^{-1} \gamma=K_{a,\gamma}.
\end{align*}
This proves that $v$ is supported inside $K_{a,\gamma}$. Hence, we proved item (1).

(2) If $v$ is identically zero, then the claim is obviously true. So, we assume that $v$ is not identically zero. If $v(n)\ne 0$ for some $n\in \Z$, then $\gl^{-1}[\mtz_{a,\dm,\gamma} v](n)=v(n)\ne 0$. By the above same argument, we must have
$n\in \dm^{-1} \fs(a)-\dm^{-1}\gamma+\dm^{-1}\fs(v)$ from which we must have
\[
\fs(v)\subseteq \dm^{-1} \fs(v)+\dm^{-1}[\fs(a)-\gamma].
\]
Recursively applying the above relation (e.g., see \cite{hj98}), we conclude that
\[
\fs(v)\subseteq \sum_{j=1}^\infty \dm^{-j}[\fs(a)-\gamma]=(\dm-1)^{-1}[\fs(a)-\gamma]=K_{a,\gamma}.
\]
This proves item (2).

(3) Note that
\[
\mtz_{a,\dm,\gamma+\dm j} (v(\cdot+m))=
\dm \sum_{k\in \Z} a(k) v(\gamma+\dm j+\dm \cdot-k+m)=
\dm \sum_{k\in \Z} a(k) v(\gamma+m+\dm(\cdot+j)-k)=[\mtz_{a,\dm,\gamma+m} v](\cdot+j).
\]
Hence, considering $m=j$, we have $\mbox{spec}(\mtz_{a,\dm,\gamma+\dm j})=\mbox{spec}(\mtz_{a,\dm,\gamma+j})$ for all $j,\gamma\in \Z$, from which we further have  $\mbox{spec}(\mtz_{a,\dm,\gamma+(\dm-1)j})=
\mbox{spec}(\mtz_{a,\dm,\gamma})$.
This proves item (3).
\ep

We now study the eigenvalues of shifted transition operators $\mtz_{a,\dm,\gamma}$ and prove the identity \eqref{smabJ}.

\begin{lemma} \label{lem:eig}
Let $\dm\in \N\bs\{1\}$ be a dilation factor.
Let $a\in \lp{0}$ and $J\in \NN$ such that $\tilde{\pa}(z)=(1+z+\cdots+z^{\dm-1})^J \tilde{\pb}(z)$ for some $b\in \lp{0}$. Define $A_n:=\dm^{-n} \sd_{a,\dm}^n \td$ and $B_n:=\dm^{-n} \sd_{b,\dm}^n \td$, i.e.,
\be \label{ABn:mask}
\widetilde{\mathbf{A}_n}(z):=\tilde{\pa}(z^{\dm^{n-1}})\cdots \tilde{\pa}(z^\dm)\tilde{\pa}(z),\qquad
\widetilde{\mathbf{B}_n}(z):=\tilde{\pb}(z^{\dm^{n-1}})\cdots \tilde{\pb}(z^\dm)\tilde{\pb}(z),\qquad n\in \N.
\ee
Let $u\in \lp{0}$ and take $N\in \N$ such that all the sequences $a,b,u$ are supported inside $(-N,N)$. Then
\be \label{ABn}
2^{J(1/p-1)} N^{-J} \|B_n*u\|_{\lp{p}}\le \|\nabla^J (A_n *u)\|_{\lp{p}}\le 2^J \|B_n*u\|_{\lp{p}},\qquad \forall\; n\in \N, 1\le p\le \infty
\ee
and
\be \label{Bn:bound}
\liminf_{n\to\infty} \|B_n*u\|_{\lp{p}}^{1/n}\ge \dm^{\frac{1}{p}-1-J}|\tilde{\pa}(1)|,\qquad \forall\; u\in \lp{0} \mbox{ such that } \sum_{k\in \Z} u(k)\ne 0,
\ee
where  $\tilde{\pa}(1):=\sum_{k\in \Z} a(k)$. Moreover,
\be \label{spec:ab}
\mbox{spec}(\tz_{a,\dm,\gamma})=
\{\tilde{\pa}(1), \dm^{-1}\tilde{\pa}(1),\ldots,\dm^{1-J}\tilde{\pa}(1)\}\cup \mbox{spec}(\tz_{b,\dm,\gamma})
\ee
and it follows directly from \eqref{ABn} and \eqref{Bn:bound} that
\be \label{sm:ab}
\limsup_{n\to\infty}
\|\nabla^J \sd_{a,\dm}^n\td \|_{\lp{p}}^{1/n}
=\limsup_{n\to\infty} \|\sd_{b,\dm}^n \td \|_{\lp{p}}^{1/n}=\dm \limsup_{n\to\infty} \|B_n\|_{\lp{p}}^{1/n}\ge \dm^{\frac{1}{p}-J} |\tilde{\pa}(1)|.
\ee
\end{lemma}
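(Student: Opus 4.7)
The plan rests on a factorization of $\widetilde{\mathbf{A}_n}(z)$ through $\widetilde{\mathbf{B}_n}(z)$. From $\tilde{\pa}(z) = E_1(z)^J\tilde{\pb}(z)$ with $E_1(z):=1+z+\cdots+z^{\dm-1}$, iterating the definitions in \eqref{ABn:mask} gives $\widetilde{\mathbf{A}_n}(z)=E_n(z)^J\widetilde{\mathbf{B}_n}(z)$, where $E_n(z):=\prod_{j=0}^{n-1}E_1(z^{\dm^j})=(z^{\dm^n}-1)/(z-1)$. Multiplying through by $(1-z)^J$ and using the telescoping identity $(1-z)E_n(z)=1-z^{\dm^n}$ yields $(1-z)^J\widetilde{\mathbf{A}_n}(z)=(1-z^{\dm^n})^J\widetilde{\mathbf{B}_n}(z)$, which in the sequence domain reads $\nabla^J A_n=c_n*B_n$, with $c_n$ supported on $\dm^n\Z$, $c_n(\dm^n k)=(-1)^k\binom{J}{k}$, and $\|c_n\|_{\lp{1}}=2^J$.

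For the upper bound in \eqref{ABn}, Young's convolution inequality applied to $\nabla^J(A_n*u)=c_n*(B_n*u)$ gives $\|\nabla^J(A_n*u)\|_{\lp{p}}\le\|c_n\|_{\lp{1}}\|B_n*u\|_{\lp{p}}=2^J\|B_n*u\|_{\lp{p}}$. For the lower bound, I would decompose $B_n*u$ into its $\dm^n$-cosets $w_\gamma(j):=(B_n*u)(\gamma+\dm^n j)$, for which $\nabla^J(A_n*u)=c_n*B_n*u$ localizes as $[\nabla^J(A_n*u)](\gamma+\dm^n\cdot)=\nabla^J w_\gamma$. Because $\fs(a),\fs(b),\fs(u)\subseteq(-N,N)$, one has $\fs(B_n)\subseteq\tfrac{\dm^n-1}{\dm-1}(-N,N)$, so $w_\gamma$ is supported in an integer interval of length $O(N)$ independent of $n$. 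Inverting $\nabla^J$ on finitely supported sequences of bounded length is a bounded operator of norm at most $C(N,J,p)$, so $\|w_\gamma\|_{\lp{p}}\le C\|\nabla^J w_\gamma\|_{\lp{p}}$; summing $p$-th powers over cosets and tracking the explicit $N,J$ dependence recovers the constant $2^{J(1/p-1)}N^{-J}$.

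For \eqref{Bn:bound}, the key inputs are $\sum_k(B_n*u)(k)=\widetilde{\mathbf{B}_n}(1)\tilde{\pu}(1)=\tilde{\pb}(1)^n\tilde{\pu}(1)$ and $\tilde{\pa}(1)=E_1(1)^J\tilde{\pb}(1)=\dm^J\tilde{\pb}(1)$. Since $B_n*u$ is supported in an interval of length at most $c\dm^n$, H\"older's inequality gives $|\sum_k(B_n*u)(k)|\le(c\dm^n)^{1-1/p}\|B_n*u\|_{\lp{p}}$; taking $n$-th roots and invoking $\tilde{\pu}(1)\ne 0$ yields $\liminf_n\|B_n*u\|_{\lp{p}}^{1/n}\ge\dm^{1/p-1-J}|\tilde{\pa}(1)|$. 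Claim \eqref{sm:ab} is then immediate: applying \eqref{ABn} with $u=\td$ and the $n$-independence of both constants forces $\limsup_n\|\nabla^J A_n\|_{\lp{p}}^{1/n}=\limsup_n\|B_n\|_{\lp{p}}^{1/n}$; rescaling by $\dm^n$ converts this to the two displayed limsups for $\sd_{a,\dm}^n\td$ and $\sd_{b,\dm}^n\td$, while \eqref{Bn:bound} supplies the lower bound.

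The main obstacle is the spectrum identity \eqref{spec:ab}, which I plan to prove by induction on $J$. The base case $J=1$ calls for constructing a one-dimensional $\mtz_{a,\dm,\gamma}$-invariant subspace $L\subseteq\ell(K_{a,\gamma})$ on which $\mtz_{a,\dm,\gamma}$ acts by the scalar $\tilde{\pa}(1)$ --- obtained by dualizing the constant-polynomial invariant subspace of the subdivision operator $\sd_{a,\dm}$ on $\Pi_0$ --- together with a canonical intertwining identification $\ell(K_{a,\gamma})/L\cong\ell(K_{b,\gamma})$ conjugating the induced transition operator with $\mtz_{b,\dm,\gamma}$; the eigenvalue multisets then split additively along this short exact sequence. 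For the inductive step, apply the $J=1$ case to the intermediate mask $c$ defined by $\tilde{\pc}(z):=E_1(z)^{J-1}\tilde{\pb}(z)$, so that $\tilde{\pa}(z)=E_1(z)\tilde{\pc}(z)$. The base case yields $\mbox{spec}(\mtz_{a,\dm,\gamma})=\{\tilde{\pa}(1)\}\cup\mbox{spec}(\mtz_{c,\dm,\gamma})$, and the inductive hypothesis yields $\mbox{spec}(\mtz_{c,\dm,\gamma})=\{\tilde{\pc}(1),\dm^{-1}\tilde{\pc}(1),\ldots,\dm^{2-J}\tilde{\pc}(1)\}\cup\mbox{spec}(\mtz_{b,\dm,\gamma})$; using $\tilde{\pc}(1)=\dm^{J-1}\tilde{\pb}(1)=\dm^{-1}\tilde{\pa}(1)$, these assemble into $\{\tilde{\pa}(1),\dm^{-1}\tilde{\pa}(1),\ldots,\dm^{1-J}\tilde{\pa}(1)\}\cup\mbox{spec}(\mtz_{b,\dm,\gamma})$ as claimed.
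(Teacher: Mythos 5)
Your treatment of the upper bound in \eqref{ABn}, of \eqref{Bn:bound}, and of \eqref{sm:ab} coincides with the paper's: the factorization $(1-z)^J\widetilde{\mathbf{A}_n}(z)=(1-z^{\dm^n})^J\widetilde{\mathbf{B}_n}(z)$, the $\lp{1}$--$\lp{p}$ convolution bound with total mass $2^J$, and the H\"older argument applied to $\sum_{k}(B_n*u)(k)=[\tilde{\pb}(1)]^n\tilde{\pu}(1)$. Your lower bound in \eqref{ABn} takes a genuinely different route: the paper convolves $\nabla^J(A_n*u)$ with the sequence $w_n$ whose symbol is $(1+z^{\dm^n}+\cdots+(z^{\dm^n})^{2N-1})^J$, so that $(1-z^{2N\dm^n})^J\widetilde{\mathbf{B}_n}(z)\tilde{\mathbf{u}}(z)$ splits into $J+1$ copies of $B_n*u$ with mutually disjoint supports, and then uses $\|w_n\|_{\lp{1}}=(2N)^J$; you instead restrict to $\dm^n$-cosets and invert $\nabla^J$ on sequences supported in a window of length $O(N)$. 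Your coset identity $[\nabla^J(A_n*u)](\gamma+\dm^n\cdot)=\nabla^J w_\gamma$ is correct and $\nabla^J$ is injective on finitely supported sequences, so the method works; however, the asserted constant $2^{J(1/p-1)}N^{-J}$ does not drop out of it. Iterating $w_\gamma(k)=\sum_{j\le k}[\nabla w_\gamma](j)$ yields roughly $(2N)^{-J}\|B_n*u\|_{\lp{p}}\le \|\nabla^J(A_n*u)\|_{\lp{p}}$, weaker by a factor $2^{J/p}$ when $p<\infty$. This is immaterial for every downstream use (only the $n$-independence of the constant matters when taking $n$-th roots), but as a proof of the displayed inequality it does not reach the stated constant.

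The genuine gap is in your plan for \eqref{spec:ab}. Dualizing the invariance of constants under $\sd_{a,\dm}$ does not produce a one-dimensional invariant \emph{subspace} $L$ carrying the eigenvalue $\tilde{\pa}(1)$; it produces an eigen\emph{functional}, namely $v\mapsto\sum_{k\in\Z}v(k)$, which satisfies $\sum_{k}[\mtz_{a,\dm,\gamma}v](k)=\tilde{\pa}(1)\sum_{k}v(k)$ precisely because $a$ has one sum rule. That gives an invariant \emph{hyperplane} (its kernel, which is $\nabla\lp{0}$ intersected with the relevant finite-dimensional space) with the scalar $\tilde{\pa}(1)$ acting on the one-dimensional \emph{quotient}. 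Correspondingly, the canonical intertwiner with $\mtz_{b,\dm,\gamma}$ points the opposite way from what you assert: one has $\mtz_{a,\dm,\gamma}(\nabla v)=\nabla(\mtz_{b,\dm,\gamma}v)$, so $\mtz_{b,\dm,\gamma}$ appears as a \emph{sub}representation via the injection $\nabla$, not as a quotient. As written, your base case postulates a one-dimensional invariant $L$ together with a surjection $\ell(K_{a,\gamma})\to\ell(K_{b,\gamma})$ intertwining the induced action with $\mtz_{b,\dm,\gamma}$, neither of which you construct, and producing the eigenvector spanning $L$ directly is essentially circular (its existence is part of what \eqref{spec:ab} asserts). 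The repair is simply to swap the roles: use the short exact sequence $0\to\nabla\lp{0}\to\lp{0}\to\lp{0}/\nabla\lp{0}\to 0$, observe that the quotient is spanned by $\td$ and carries the eigenvalue $\tilde{\pa}(1)$, and that the sub is conjugate to the transition operator of the intermediate mask. With that correction, your induction on $J$ (including the bookkeeping $\tilde{\pc}(1)=\dm^{-1}\tilde{\pa}(1)$) becomes exactly the paper's filtration argument with $\mathscr{V}_j:=\nabla^j\lp{0}$, $j=0,\ldots,J$, unrolled one factor at a time.
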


\bp By $(1-z)(1+z+\cdots+z^{\dm-1})=1-z^\dm$, the symbol of $\nabla^J(A_n*u)$ is
\be \label{ABn:symbol}
(1-z)^J \widetilde{\mathbf{A}_n}(z)\tilde{\mathbf{u}}(z)
=(1-z^{\dm^n})^J\widetilde{\mathbf{B}_n}(z)\tilde{\mathbf{u}}(z)
=\sum_{j=0}^J \frac{J!}{j!(J-j)!} (-1)^j z^{\dm^n j}
\widetilde{\mathbf{B}_n}(z)\tilde{\mathbf{u}}(z).
\ee
Therefore, we deduce from the above identity in \eqref{ABn:symbol} that
\[
\|\nabla^J (A_n*u)\|_{\lp{p}}\le \sum_{j=0}^J \frac{J!}{j!(J-j)!} \|B_n*u\|_{\lp{p}}=2^J \|B_n*u\|_{\lp{p}}.
\]
This proves the upper bound in \eqref{ABn}.
Define a sequence $w_n\in \lp{0}$ by $\widetilde{\mathbf{w}_n}(z):=(1+z^{\dm^n}+(z^{\dm^n})^2+\cdots+(z^{\dm^n})^{2N-1})^J$. On the other hand, to prove the lower bound in \eqref{ABn}, we have
\begin{align*}
\widetilde{\mathbf{w}_n}(z)
(1-z)^J \widetilde{\mathbf{A}_n}(z)\tilde{\mathbf{u}}(z)
&=\widetilde{\mathbf{w}_n}(z)
(1-z^{\dm^n})^J \widetilde{\mathbf{B}_n}(z) \tilde{\mathbf{u}}(z)\\
&=(1-z^{2N \dm^n})^J \widetilde{\mathbf{B}_n}(z) \tilde{\mathbf{u}}(z)
=\sum_{j=0}^J \frac{J!}{j!(J-j)!} (-1)^j z^{2N j \dm^n} \widetilde{\mathbf{B}_n}(z) \tilde{\mathbf{u}}(z).
\end{align*}
Because all the sequences $a,b,u$ are supported inside $[1-N,N-1]$, we observe that the sequence $B_n*u$ is supported inside $[1-N\dm^n,N\dm^n-1]$. Hence, the sequences having the symbols $z^{2N j \dm^n} \widetilde{\mathbf{B}_n}(z) \tilde{\mathbf{u}}(z)$ must have mutually disjoint supports for $j=0,\ldots,J$. Then we deduce from the above identity that
\[
2^{J/p} \|B_n*u\|_{\lp{p}}
=\|w_n*(\nabla^J(A_n*u))\|_{\lp{p}}
\le \|\nabla^J(A_n*u)\|_{\lp{p}}\|w_n\|_{\lp{1}}
=(2N)^J \|\nabla^J(A_n*u)\|_{\lp{p}},
\]
from which we proved the lower bound in \eqref{ABn}.

We now prove \eqref{Bn:bound}. Let $1\le p'\le \infty$ such that $\frac{1}{p}+\frac{1}{p'}=1$.
Noting
\[
[\tilde{\pb}(1)]^n\tilde{\pu}(1)=
\widetilde{\mathbf{B_n}}(1)\tilde{\mathbf{u}}(1)=\widetilde{\mathbf{B_n*u}}(1)=
\sum_{k=1-N\dm^n}^{N\dm^n-1} [B_n*u](k)
\]
and using the H\"older's inequality, we have
\begin{align*}
|\tilde{\pb}(1)|^n|\tilde{\pu}(1)|
&\le \sum_{k=1-N\dm^n}^{N\dm^{n}-1}
|[B_n*u](k)|\le
\left(\sum_{k=1-N\dm^n}^{N\dm^{n}-1}
|[B_n*u](k)|^p\right)^{1/p}
\left(\sum_{k=1-N\dm^n}^{N\dm^{n}-1}
1\right)^{1/p'}\\
&=\|B_n*u\|_{\lp{p}} (2N \dm^n-1)^{1/p'}\le \|B_n*u\|_{\lp{p}} (2N \dm^n)^{1/p'}.
\end{align*}
Because $\tilde{\pu}(1)=\sum_{k\in \Z} u(k) \ne 0$, we deduce from the above identity that
\[
|\tilde{\pb}(1)| \le \left(\liminf_{n\to\infty}\|B_n*u\|_{\lp{p}}^{1/n}\right)
\left(\lim_{n\to\infty} (2N\dm^n)^{\frac{1}{n p'}}\right)=
\dm^{1/p'} \liminf_{n\to\infty}\|B_n*u\|_{\lp{p}}^{1/n},
\]
from which we have \eqref{Bn:bound} due to $1/p'=1-1/p$ and $\tilde{\pb}(1)=\dm^{-J}\tilde{\pa}(1)$.

For $j=0,\ldots,J$, we define $\mathscr{V}_j:=\{\nabla^j v \setsp v\in \lp{0}\}$ and define $b_j\in \lp{0}$ by $\widetilde{\pb_j}(z)=(1+z+\cdots+z^{\dm-1})^{J-j} \tilde{\pb}(z)$. Note that $\tilde{\pa}(z)=(1+z+\cdots+z^{\dm-1})^j \widetilde{\pb_j}(z)$ for all $j=0,\ldots, J$. In particular, $b_0=a$ and $b_J=b$. Note that the symbol of $a*(\nabla^j v)$ is
\[
(1-z)^j \tilde{\pa}(z)\tilde{\pv}(z)=(1-z^{\dm})^j \widetilde{\pb_j}(z)\tilde{\pv}(z)=\sum_{k=0}^j \frac{j!}{k!(j-k)!} (-1)^k z^{\dm k} \widetilde{\pb_j}(z)\tilde{\pv}(z).
\]
Therefore, by the definition of $\mtz_{a,\dm,\gamma}$ in \eqref{tz}, we have
\begin{align*}
\mtz_{a,\dm,\gamma}(\nabla^j v)
&=
\dm [a*(\nabla^j v)](\gamma+\dm\cdot)=
\sum_{k=0}^j \frac{j!}{k!(j-k)!} (-1)^k \dm [b_j*v](\gamma+\dm (\cdot-k))\\
&=\sum_{k=0}^j \frac{j!}{k!(j-k)!} (-1)^k  [\mtz_{b_j,\dm,\gamma} v](\cdot-k)
=\nabla^j (\mtz_{b_j,\dm,\gamma} v).
\end{align*}
That is, we proved
\be \label{tz:ab:j}
\mtz_{a,\dm,\gamma}(\nabla^j v)=
\nabla^j (\mtz_{b_j,\dm,\gamma}),\qquad \forall\; v\in \lp{0}\quad \mbox{and}\quad j=0,\ldots,J.
\ee
We conclude from \eqref{tz:ab:j} that $\mtz_{a,\dm,\gamma} \mathscr{V}_j\subseteq \mathscr{V}_j$
and $\mbox{spec}(\mtz_{a,\dm,\gamma}|_{\mathscr{V}_j})
=\mbox{spec}(\mtz_{b_j,\dm,\gamma})$ for all $j=0,\ldots,J$.

For $j=0,\ldots, J-1$, due to $\sr(a,\dm)\ge J$, note that $b_j$ must have at lease one sum rule, i.e., $\sum_{k\in \Z} b_j(\gamma+\dm k)=\frac{1}{\dm}\sum_{k\in \Z} b_j(k)=
\dm^{-1-j}\tilde{\pa}(1)$ for all $\gamma\in \Z$. Therefore,
\[
\sum_{k\in \Z} [\mtz_{b_j,\dm,\gamma}\td](k)
=\dm \sum_{k\in \Z} [b_j*\td](\gamma+\dm k)=
\dm \sum_{k\in \Z} b_j(\gamma+\dm k)=\dm^{-j}
\tilde{\pa}(1),
\]
from which we obtain $\mtz_{b_j,\dm,\gamma}\td-\dm^{-j}\tilde{\pa}(1) \td= \nabla w$ for some $w\in \lp{0}$. Hence, we deduce from \eqref{tz:ab:j} that
\begin{align*}
\mtz_{a,\dm,\gamma}(\nabla^j \td )-\dm^{-j}\tilde{\pa}(1)\nabla^{j}\td &=\nabla^j (\mtz_{b_j,\dm,\gamma}\td)-
\dm^{-j} \tilde{\pa}(1) \nabla^j \td\\
&=
\nabla^j\left( \mtz_{b_j,\dm,\gamma}\td-\dm^{-j} \tilde{\pa}(1)\td\right)
=
\nabla^j \nabla w=\nabla^{j+1} w\in \mathscr{V}_{j+1}.
\end{align*}
This proves $\mtz_{a,\dm,\gamma}(\nabla^j \td )-\dm^{-j}\tilde{\pa}(1)\nabla^{j}\td\in \mathscr{V}_{j+1}$ for all  $j=0,\ldots,J-1$. Note that $\mathscr{V}_j/\mathscr{V}_{j+1}$ is a one-dimensional space and is spanned by $\nabla^j \td$.
Consequently, we conclude that $\mbox{spec}(\mtz_{a,\dm,\gamma}|_{\mathscr{V}_j/\mathscr{V}_{j+1}})
=\{\dm^{-j}\tilde{\pa}(1)\}$ for all $j=0,\ldots,J-1$.
Since we proved $\mbox{spec}(\mtz_{a,\dm,\gamma}|_{\mathscr{V}_J})
=\mbox{spec}(\mtz_{b_J,\dm,\gamma})$ and $b_J=b$, we conclude that  \eqref{spec:ab} holds.
\ep

We now prove the major auxiliary result on the special eigenvalues of the shifted transition operators
$\mtz_{a,\dm,\gamma}$, which plays a key role in the proof of \cref{thm:int}.

\begin{theorem}\label{thm:eig}
Let $\dm\in \N\bs\{1\}$ be a dilation factor and $a\in \lp{0}$ be a finitely supported mask satisfying $\sum_{k \in \Z} a(k)=1$. Then
\be \label{srsm}
\sr(a,\dm)\ge \sm_p(a,\dm) \qquad \forall\; 1\le p\le \infty.
\ee
If $\sm_p(a,\dm)>\frac{1}{p}+m$ with $m\in \NN$ for some $1\le p\le \infty$, then
$|\lambda|<\dm^{-m}$ for all $\gl\in \mbox{spec}(\tz_{a,\dm,\gamma})$ but $\gl\not\in  \{1, \dm^{-1},\ldots, \dm^{-m}\}$, and
each $\dm^{-j}$ for $j=0,\ldots, m$ must be a simple eigenvalue of the transition operator $\mtz_{a,\dm,\gamma}$ in \eqref{tz} acting on $\lp{0}$ for all $\gamma\in \Z$.
\end{theorem}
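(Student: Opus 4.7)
The plan is to derive both conclusions from the spectral decomposition in \eqref{spec:ab} of \cref{lem:eig}, combined with a direct comparison between eigenvalues of $\tz_{b,\dm,\gamma}$ and the growth rate $\rho_J(a,\dm)_p$.

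First I would establish the inequality $\sr(a,\dm)\ge \sm_p(a,\dm)$, which is essentially immediate from \eqref{sm:ab}: setting $J:=\sr(a,\dm)$, the bound $\rho_J(a,\dm)_p=\limsup_{n\to\infty}\|\nabla^J\sd_{a,\dm}^n\td\|_{\lp p}^{1/n}\ge \dm^{1/p-J}|\tilde{\pa}(1)|=\dm^{1/p-J}$ (using $\tilde{\pa}(1)=1$) gives
\[
\sm_p(a,\dm)=\tfrac{1}{p}-\log_\dm \rho_J(a,\dm)_p\le \tfrac{1}{p}-\tfrac{1}{p}+J=J=\sr(a,\dm).
\]
In particular, the hypothesis $\sm_p(a,\dm)>\tfrac1p+m$ forces $J\ge m+1$, so the set $\{1,\dm^{-1},\ldots,\dm^{-m}\}$ is contained in $\{1,\dm^{-1},\ldots,\dm^{1-J}\}$ from \eqref{spec:ab}.

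The heart of the proof is controlling $\mbox{spec}(\tz_{b,\dm,\gamma})$, where $b$ is the mask with $\tilde{\pa}(z)=(1+z+\cdots+z^{\dm-1})^J\tilde{\pb}(z)$. Let $\gl\ne 0$ be an eigenvalue of $\tz_{b,\dm,\gamma}$ with eigenvector $v\in\lp{0}$. By \cref{lem:tz}(2), $v$ is supported in the fixed bounded set $K_{b,\gamma}$, so all $\lp{p}$-norms of $v$ are equivalent with constants independent of $n$. By induction on $n$ (unwinding the definition of $\tz_{b,\dm,\gamma}$ via the same computation that gave \eqref{An}), one shows
\[
[\tz_{b,\dm,\gamma}^n v](j)=\dm^n [B_n *v](c_n+\dm^n j),\qquad c_n:=\gamma\tfrac{\dm^n-1}{\dm-1},\quad j\in\Z,
\]
so the eigenvalue equation $\tz_{b,\dm,\gamma}^n v=\gl^n v$ yields
\[
|\gl|^n\|v\|_{\lp p}=\dm^n\bigl\|[B_n*v](c_n+\dm^n\cdot)\bigr\|_{\lp p}\le \dm^n \|B_n*v\|_{\lp p}\le \dm^n\|v\|_{\lp 1}\|B_n\|_{\lp p}
\]
by downsampling monotonicity and Young's inequality. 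Since $\|v\|_{\lp 1}\le C\|v\|_{\lp p}$ (constant depending only on $|K_{b,\gamma}|$), taking $n$-th roots and using \eqref{sm:ab} gives
\[
|\gl|\le \dm\,\limsup_{n\to\infty}\|B_n\|_{\lp p}^{1/n}=\rho_J(a,\dm)_p=\dm^{1/p-\sm_p(a,\dm)}<\dm^{-m},
\]
using the hypothesis $\sm_p(a,\dm)>\tfrac1p+m$.

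Finally I would combine these two pieces. By \eqref{spec:ab}, any $\gl\in\mbox{spec}(\tz_{a,\dm,\gamma})\setminus\{1,\dm^{-1},\ldots,\dm^{-m}\}$ either (a) equals $\dm^{-j}$ with $j\in\{m+1,\ldots,J-1\}$, whence trivially $|\gl|=\dm^{-j}<\dm^{-m}$, or (b) lies in $\mbox{spec}(\tz_{b,\dm,\gamma})$, whence the bound above gives $|\gl|<\dm^{-m}$. For the simplicity claim, each $\dm^{-j}$, $j=0,\ldots,m$, appears exactly once in $\{1,\dm^{-1},\ldots,\dm^{1-J}\}$ (these values are pairwise distinct), and the inequality $|\gl|<\dm^{-m}\le\dm^{-j}$ just established rules out $\dm^{-j}$ belonging to $\mbox{spec}(\tz_{b,\dm,\gamma})$; together with the multiset union in \eqref{spec:ab}, this gives simplicity. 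The main technical step to get right is the identity $[\tz_{b,\dm,\gamma}^n v](j)=\dm^n[B_n*v](c_n+\dm^n j)$ and the clean reduction of the eigenvalue bound to $\limsup_n\|B_n\|_{\lp p}^{1/n}$; everything else is bookkeeping on the spectrum from \cref{lem:eig}.
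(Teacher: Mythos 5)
Your proposal is correct and follows essentially the same route as the paper: the inequality \eqref{srsm} from \eqref{sm:ab}, the induction identity $\mtz_{b,\dm,\gamma}^n v=\dm^n[B_n*v](\gamma+\dm\gamma+\cdots+\dm^{n-1}\gamma+\dm^n\cdot)$ (your $c_n=\gamma\tfrac{\dm^n-1}{\dm-1}$ is the same shift), the resulting bound $|\gl|\le\dm\limsup_n\|B_n\|_{\lp{p}}^{1/n}=\dm^{1/p-\sm_p(a,\dm)}<\dm^{-m}$ on $\mbox{spec}(\mtz_{b,\dm,\gamma})$, and the final bookkeeping via the multiset union \eqref{spec:ab}. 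This is exactly the paper's argument, with only cosmetic differences in how the norm equivalence on finitely supported eigenvectors is invoked.
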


\bp
Let $J:=\sr(a,\dm)$. Write
$\tilde{\pa}(z)=(1+z+\cdots+z^{\dm-1})^J \tilde{\pb}(z)$ for a unique sequence $b\in \lp{0}$.
By \eqref{sm:ab} in \cref{lem:eig} and $\tilde{\pa}(1)=1$, we deduce from the definition of $\sm_p(a,\dm)$ in \eqref{sm} that
\be \label{an:b:sm}
\dm^{\frac{1}{p}-J}\le \limsup_{n\to\infty}\|\sd_{b,\dm}^n \td\|_{\lp{p}}^{1/n}=
\limsup_{n\to\infty}\|\nabla^J\sd_{a,\dm}^n \td\|_{\lp{p}}^{1/n}=\dm^{\frac{1}{p}-\sm_p(a,\dm)},
\ee
from which we must have $\sm_p(a,\dm)\le J=\sr(a,\dm)$. This proves \eqref{srsm}.

We now prove the claims under the assumption $\sm_p(a,\dm)>\frac{1}{p}+m$. Using the identity \eqref{spec:ab} which links the eigenvalues of $\tz_{a,\dm,\gamma}$ with those of $\tz_{b,\dm,\gamma}$,
the key ingredient of the proof here is to show that the assumption $\sm_p(a,\dm)>\frac{1}{p}+m$ will force other non-special eigenvalues of $\tz_{a,\dm,\gamma}$ to have modulus less than $\dm^{-m}$.
Define $B_n$ as in \eqref{ABn:mask}.
We shall use induction to prove that
\be \label{tz:bn}
\mtz_{b,\dm,\gamma}^n v=\dm^n [B_n*v](\gamma+\dm \gamma+\cdots + \dm^{n-1} \gamma+\dm^n \cdot),\qquad n\in \N, v\in \lp{0}.
\ee
By the definition of $\mtz_{b,\dm,\gamma}$ in \eqref{tz}, it is easy to see that \eqref{tz:bn} holds with $n=1$ due to $B_1=b$. Suppose that the claim holds for $n-1$ with $n\ge 2$. By the induction hypothesis, we have
\begin{align*}
\mtz_{b,\dm,\gamma}^n v
&=\mtz_{b,\dm,\gamma} [\mtz_{b,\dm,\gamma}^{n-1} v]
=\dm^{n-1}\mtz_{b,\dm,\gamma} [(B_{n-1}*v)(\gamma+\dm \gamma+\cdots+\dm^{n-2}\gamma+\dm^{n-1}\cdot)]\\
&=
\dm^n \sum_{k\in \Z} b(k) (B_{n-1}*v)(\gamma+\dm \gamma+\cdots \dm^{n-2}\gamma+\dm^{n-1}(\gamma+\dm\cdot-k))\\
&=\dm^n \sum_{k\in \Z}\sum_{j\in \Z} b(k) B_{n-1}(j) v(\gamma+\dm \gamma+\cdots+\dm^{n-1}\gamma+\dm^{n}\cdot-(\dm^{n-1}k+j)).
\end{align*}
Therefore, using the above identity and the definition of $\sd_{b,\dm}$ in \eqref{sd}, we have
\begin{align*}
\mtz_{b,\dm,\gamma}^n v
&= \dm^n \sum_{j\in \Z} \sum_{k\in \Z} b(k) B_{n-1}(j-\dm^{n-1}k) v(\gamma+\dm \gamma+\cdots+\dm^{n-1}\gamma+\dm^{n}\cdot-j)\\
&=\dm \sum_{j\in \Z}[\sd_{B_{n-1},\dm^{n-1}}b](j)v(\gamma+\dm \gamma+\cdots+\dm^{n-1}\gamma+\dm^{n}\cdot-j)\\
&=\dm^n \sum_{j\in \Z} B_n(j)v(\gamma+\dm \gamma+\cdots+\dm^{n-1}\gamma+\dm^{n}\cdot-j)\\
&=\dm^n [B_n*v](\gamma+\dm \gamma+\cdots+\dm^{n-1}\gamma+\dm^{n}\cdot),
\end{align*}
where we used the fact that $\sd_{B_{n-1},\dm^{n-1}} b=\dm^{n-1} B_n$.
This proves \eqref{tz:bn} by induction on $n\in \N$.

Suppose $\mtz_{b,\dm,\gamma}v=\gl v$ for some $\gl\in \C\bs\{0\}$ and $v\in \lp{0}\bs\{0\}$. Then we deduce from \eqref{tz:bn} that
\[
\gl^n v=\mtz_{b,\dm,\gamma}^n v
=\dm^n [B_n*v](\gamma+\dm \gamma+\cdots +\dm^{n-1} \gamma+\dm^n \cdot).
\]
Consequently, we have
\[
|\gl|^n \|v\|_{\lp{p}}
\le \dm^n \| B_n*v\|_{\lp{p}}
\le \dm^n \|B_n\|_{\lp{p}} \|v\|_{\lp{1}}.
\]
Using \eqref{an:b:sm} and noting $B_n=\dm^{-n}\sd_{b,\dm}^n \td$, we conclude that
\[
|\gl|=\lim_{n\to\infty} |\gl| \|v\|_{\lp{p}}^{1/n}
\le \dm \limsup_{n\to\infty} \|B_n\|_{\lp{p}}^{1/n}=
 \limsup_{n\to\infty} \|\sd_{b,\dm}^n \td\|_{\lp{p}}^{1/n}=
\dm^{\frac{1}{p}-\sm_p(a,\dm)}<\dm^{-m},
\]
where we used our assumption $\sm_p(a,\dm)>\frac{1}{p}+m$ in the last inequality. This proves that any nonzero eigenvalue in $\mbox{spec}(\mtz_{b,\dm,\gamma})$ must be less than $\dm^{-m}$. Now all the claims follow directly
from \eqref{spec:ab} of \cref{lem:eig} and our assumption $\tilde{\pa}(1)=1$.
\ep

Before proving \cref{thm:int},
for a convergent subdivision scheme, we first show that the special limit function $\eta_{\td}$ for the particular initial data $v=\td$ must be the $\dm$-refinable function $\phi$.
Indeed, since $a\in \lp{0}$, the mask $a$ must support inside $[-N,N]$ for some $N\in \N$ and therefore, the function $\eta_{\td}$ must be supported inside $[-\frac{N}{\dm-1},\frac{N}{\dm-1}]\subseteq [-N,N]$. Then for any $x:=\dm^{-n_0} k_0$ with $n_0\in \N$ and $k_0\in \Z$, noting that $x=\dm^{-n}(\dm^{n-n_0} k_0)$ with $\dm^{n-n_0}k_0\in \Z$ for all $n\ge n_0$, we directly derive from \eqref{converg} and the definition of $\sd_{a,\dm}$ in \eqref{sd} that
\[
\eta_{\td}(x)=\lim_{n\to\infty} [\sd_{a,\dm}^{n+n_0} \td](\dm^{n-n_0} k_0)
=\dm \sum_{k=-N}^N a(k) \lim_{n\to\infty} [\sd_{a,\dm}^{n+n_0-1} \td](\dm^{n-n_0}k_0-\dm k)=
\dm\sum_{k\in \Z} a(k) \eta_{\td}(\dm x-k).
\]
Since $\{\dm^{-n_0}k_0 \setsp n_0 \in \N, k_0\in \Z\}$ is dense in $\R$ and $\eta_{\td}$ is continuous, the function $\eta_{\td}$ must satisfy \eqref{refeq}, i.e., $\wh{\eta_{\td}}(\dm \xi)=\tilde{\pa}(e^{-i\xi})\wh{\eta_{\td}}(\xi)$.
By $\sum_{k\in \Z} a(k)=1$, we observe that $\sum_{k\in \Z} [\sd_{a,\dm}^n \td](k)=\dm^n$ for all $n\in \N$.
Now using a Riemann sum for the continuous function $\eta_{\td}$, we deduce from \eqref{converg} that
\[
\int_{\R} \eta_{\td}(x)dx=\lim_{n\to\infty}
\dm^{-n} \sum_{k=-\dm^n N}^{\dm^n N}
\eta_{\td}(\dm^{-n} k)=\lim_{n\to\infty} \dm^{-n} \sum_{k\in \Z} [\sd_{a,\dm}^n \td](k)=1.
\]
That is, $\wh{\eta_{\td}}(0)=1$. Hence, the function $\eta_{\td}$ must agree with the $\dm$-refinable function $\phi$ with mask $a$.
For $0<\tau\le 1$ and a function $f\in \Lp{p}$, we say that $f$ belongs to the Lipschitz space $\mbox{Lip}(\tau, \Lp{p})$ if there exists a positive constant $C$ such that $\|f-f(\cdot-t)\|_{\Lp{p}}\le C\|t\|^\tau$ for all $t\in \R$. For convenience, we define $\mbox{Lip}(0, \Lp{p}):=\Lp{p}$.
The $L_p$ smoothness of a function $f\in \Lp{p}$ is measured by its \emph{$L_p$ critical exponent} $\sm_p(f)$ defined by
\be \label{sm:f}
\sm_p(f):=\sup\{ m+\tau \setsp  0\le \tau<1\; \mbox{ and }\; f^{(m)}\in \mbox{Lip}(\tau, \Lp{p}), m\in \NN\}.
\ee
For a compactly supported distribution $f$, we say that the integer shifts of $f$ are \emph{stable} if $\mbox{span}\{\wh{f}(\xi+2\pi k)\setsp k\in \Z\}=\C$ for every $\xi\in \R$.
For the $\dm$-refinable function $\phi$ with mask $a\in \lp{0}$,
by \cite[Theorem~3.3]{han99} and \cite[Theorem~4.3]{han03}, we have $\sm_p(\phi)\ge \sm_p(a,\dm)$; In addition, $\sm_p(\phi)= \sm_p(a,\dm)$ if the integer shifts of $\phi$ are stable.

Next, we discuss how the subdivision operator $\sd_{a,\dm}$ reproduces polynomials.
For any mask $a\in \lp{0}$, it is known in \cite[(2.20)]{han03} (also see \cite[Theorem~3.4]{han13})
that for $J:=\sr(a,\dm)$,
\be \label{sr:poly}
\sd_{a,\dm} \pp=\sum_{k\in \Z} \pp(\dm^{-1}(\cdot-k))a(k)
=\sum_{j=0}^\infty \frac{(-\dm^{-1})^j \pp^{(j)}(\dm^{-1}\cdot)}{j!} \sum_{k\in \Z} k^j a(k),
\qquad \pp\in \Pi_{J-1}.
\ee
We prove it here for the convenience of the reader. Using the Taylor expansion of $\pp\in \Pi_{J-1}$, we have
\[
\pp(k)=\pp(\dm^{-1}x-\dm^{-1}(x-\dm k))=
\sum_{j=0}^\infty \frac{\pp^{(j)}(\dm^{-1} x)}{j!} (-\dm^{-1}(x-\dm k))^j
=\sum_{j=0}^{\infty} \frac{(-\dm^{-1})^j \pp^{(j)}(\dm^{-1} x)}{j!}
(x-\dm k)^j.
\]
By the definition of $\sd_{a,\dm}$ in \eqref{sd}, using \eqref{sr:2} for sum rules and the above identity, for $\pp\in \Pi_{J-1}$ and $x\in \Z$, we have
\begin{align*}
[\sd_{a,\dm} \pp](x)
&=\dm \sum_{k\in \Z} \pp(k) a(x-\dm k)=\dm \sum_{j=0}^{\infty}
\frac{(-\dm^{-1})^j \pp^{(j)}(\dm^{-1}x)}{j!}
\sum_{k\in \Z}  (x-\dm k)^j a(x-\dm k)\\
&=\sum_{j=0}^{\infty}
\frac{(-\dm^{-1})^j \pp^{(j)}(\dm^{-1}x)}{j!}
\sum_{k\in \Z}  k^j a(k)=\sum_{k\in \Z} \left(\sum_{j=0}^{\infty} \frac{(-\dm^{-1} k)^j \pp^{(j)}(\dm^{-1}x)}{j!}\right)a(k),
\end{align*}
which proves \eqref{sr:poly} by noting
$\sum_{j=0}^{\infty} \frac{1}{j!}\pp^{(j)}(\dm^{-1} x) (-\dm^{-1} k)^j=\pp(\dm^{-1}(x-k))$.

We are now ready to prove \cref{thm:int}.

\begin{proof}[Proof of \cref{thm:int}]
(1)$\imply$(2). For $v\in \lp{}$, we define $f_v(x):=\sum_{k\in \Z} v(k) \phi(x-k)$.
Because $\phi(s_a+k)=\td(k)$ for all $k\in \Z$, we have $v(j)=f_v(s_a+j)$ for all $j\in \Z$. Hence, if $f_v=0$, then we must have $v(j)=0$ for all $j\in \Z$. Therefore, the integer shifts of $\phi$ are linearly independent and hence stable. Because $\phi\in \CH{m}$, we conclude from
\cite[Theorem~4.3 or Corollary~5.1]{han03} that $\sm_\infty(a,\dm)>m$.
Define a sequence $w\in \lp{0}$ by
\be \label{w:phi}
w(k):=\phi(\dm^{m_s}s_a+k),\qquad k\in \Z.
\ee
Note that $\phi$ is $\dm^n$-refinable with the mask $A_n$ for every $n\in \N$ by \eqref{phi:An}.
Then for all $k\in \Z$,
\[
[A_{m_s}*w](\dm^{m_s}k)=\sum_{j\in \Z} A_{m_s}(j) w(\dm^{m_s}k-j)=
\sum_{j\in \Z} A_{m_s}(j) \phi(\dm^{m_s}(s_a+k)-j)=\dm^{-m_s} \phi(s_a+k)=\dm^{-m_s} \td(k),
\]
which proves \eqref{cond:ms}, and
\begin{align*}
[A_{n_s}*w](\dm^{m_s}(\dm^{n_s}-1)s_a+\dm^{n_s}k)
&=\sum_{j\in \Z} A_{n_s}(j)w(\dm^{m_s}(\dm^{n_s}-1)s_a+\dm^{n_s}k-j)\\
&=\sum_{j\in \Z} A_{n_s}(j) \phi(\dm^{n_s}(\dm^{m_s} s_a+k)-j)\\
&=\dm^{-n_s} \phi(\dm^{m_s}s_a+k)=\dm^{-n_s} w(k),
\end{align*}
which proves \eqref{cond:ns}. This proves (1)$\imply$(2).

(2)$\imply$(1). Because $\sm_\infty(a,\dm)>m$, by \cite[Theorem~4.3 or Corollary~5.1]{han03}, we have $\phi\in \CH{m}$, which is also obtained from $\sm_\infty(\phi)\ge \sm_\infty(a,\dm)$.
Define
\[
v(k):=\phi(\dm^{m_s}s_a+k),\qquad k\in \Z.
\]
Since $\phi$ is $\dm^n$-refinable with the mask $A_n$ for every $n\in \N$,  by the same argument as in the proof of (1)$\imply$(2), we have
\[
[A_{m_s}*v](\dm^{m_s}k)=\sum_{j\in \Z} A_{m_s}(j) v(\dm^{m_s}k-j)=
\sum_{j\in \Z} A_{m_s}(j) \phi(\dm^{m_s}(s_a+k)-j)=\dm^{-m_s} \phi(s_a+k)
\]
and
\[
[A_{n_s}*v](\dm^{m_s}(\dm^{n_s}-1)s_a+\dm^{n_s}k)=
\sum_{j\in \Z} A_{n_s}(j) \phi(\dm^{n_s}(\dm^{m_s} s_a+k)-j)=
\dm^{-n_s}\phi(\dm^{m_s} s_a+k)=\dm^{-n_s} v(k).
\]
That is, we proved
\be \label{phi:w:v}
[A_{m_s}*v](\dm^{m_s}k)=\dm^{-m_s} \phi(s_a+k)
\quad \mbox{and}\quad [A_{n_s}*v](\dm^{m_s}(\dm^{n_s}-1)s_a+\dm^{n_s}k)=\dm^{-n_s} v(k).
\ee
Using the definition of a shifted transition operator $\tz_{a,\dm,\gamma}$ in \eqref{tz}, the second identity in \eqref{phi:w:v} can be equivalently expressed as $\tz_{A_{n_s},\dm^{n_s},\gamma} v=v$ with $\gamma:=\dm^{m_s}(\dm^{n_s}-1)s_a\in \Z$. That is, $v\in \lp{0}$ is an eigenvector of $\tz_{A_{n_s},\dm^{n_s},\gamma}$ corresponding to the eigenvalue $1$.
Similarly, the identity in \eqref{cond:ns} can be equivalently expressed as $\tz_{A_{n_s},\dm^{n_s},\gamma} w=w$. Note that $w$ cannot be the trivial zero sequence due to \eqref{cond:ms}.
Also it is easy to deduce from the definition of $\sm_\infty(a,\dm)$ in \eqref{sm} that $\sm_\infty(A_{n_s},\dm^{n_s})=\sm_\infty(a,\dm)>m\ge 0$.
By $\sm_\infty(A_{n_s},\dm^{n_s})>0$ and \cref{thm:eig} with $a$ and $\dm$ being replaced by $A_{n_s}$ and $\dm^{n_s}$, respectively, we conclude that $1$ must be a simple eigenvalue of $\tz_{A_{n_s},\dm^{n_s},\gamma}$ and hence, we conclude that $v=c w$ for some constant $c$.
Now by \eqref{cond:ns} and the first identity in \eqref{phi:w:v}, we have
\[
\dm^{-m_s} \phi(s_a+k)= [A_{m_s}*v](\dm^{m_s}k)=c [A_{m_s}*w](\dm^{m_s}k)
=c\dm^{-m_s}\td(k)
\]
for all $k\in \Z$. Hence, $\phi(s_a+k)=c\td(k)$ for all $k\in \Z$.
By \cref{thm:eig}, we have $\sr(a,\dm)\ge \sm_\infty(a,\dm)>m\ge 0$ and hence, \eqref{phi:poly} must hold with $J=1$ by
the proof of \cref{prop:lpm}. Hence, we conclude from \eqref{phi:poly} that $1=\sum_{k\in \Z} \phi(s_a+k)=\sum_{k\in \Z} c\td(k)=c$.
This proves (2)$\imply$(1).

(3)$\imply$(1).
Because the subdivision scheme is convergent, we already explained that $\eta_{\td}$ must be the $\dm$-refinable function $\phi$ with the mask $a$. By \cite[Theorem~2.1]{hj06} or \cite[Theorem~4.3]{han03}, we must have $\sm_\infty(a,\dm)>m$ and hence $\phi$ must belong to $\CH{m}$. Taking $v=\td$ in \eqref{cond:intsubdiv}, \eqref{intphi:sa} follows directly from \eqref{cond:intsubdiv}. Hence, $\phi$ must be an $s_a$-interpolating $\dm$-refinable function. This proves (3)$\imply$(1).

(1)$\imply$(3). We proved (1)$\imply$(2) and hence, we have $\sm_\infty(a,\dm)>m$. By \cite[Theorem~2.1]{hj06}, the $\dm$-subdivision scheme with mask $a$ is $\mathscr{C}^m$-convergent and hence, we already proved that $\eta_{\td}=\phi$ and $\eta_v=\sum_{k\in \Z} v(k)\phi(\cdot-k)$.
The identity \eqref{cond:intsubdiv} follows trivially from \eqref{intphi:sa}. This proves (1)$\imply$(3).

We now prove \eqref{poly:int}.
Since $J=\sr(a,\dm)$,
we conclude from \eqref{a:lpm} and \eqref{sr:poly} that
\[
\sd_{a,\dm} \pp
=\sum_{j=0}^\infty \frac{(-\dm^{-1})^j \pp^{(j)}(\dm^{-1}\cdot)}{j!} m_a^j=
\sum_{j=0}^\infty \frac{1}{j!}\pp^{(j)}(\dm^{-1} \cdot) (-\dm^{-1} m_a)^j=
\pp(\dm^{-1}(\cdot-m_a))=\pp(\dm^{-1}(s_a+\cdot)-s_a),
\]
where we used $s_a=\frac{m_a}{\dm-1}$ and hence $-\dm^{-1}m_a=\dm^{-1} s_a-s_a$. Now
by induction we have
\[
\sd_{a,\dm}^n \pp=\sd_{a,\dm}  [\pp(\dm^{1-n}(\cdot+s_a)-s_a)]=\pp(\dm^{1-n}(\dm^{-1}(\cdot+s_a)-s_a+s_a)-s_a)=
\pp(\dm^{-n}(\cdot+s_a)-s_a).
\]
This proves \eqref{poly:int}.
\end{proof}

Let $\phi$ be the $\dm$-refinable function with a mask $a\in \lp{0}$, i.e., $\wh{\phi}(\xi):=\prod_{j=1}^\infty \tilde{\pa}(e^{-i\dm^{-j}\xi})$. Under the condition $\sm_\infty(a,\dm)>0$, the function $\phi$ is continuous.
Note that $s_a\in \R$ satisfies \eqref{cond:sa} if and only if $s_a\in D_\dm$, where $D_\dm:=\cup_{m_s=0}^\infty \cup_{n_s=1}^\infty [\dm^{-m_s}(\dm^{n_s}-1)^{-1}\Z]$ is dense in $\R$. It is easy to observe that $\cup_{m=0}^\infty [\dm^{-m}\Z]\subseteq D_\dm$ and $\cup_{n=1}^\infty [(\dm^{n}-1)^{-1}\Z]\subseteq D_\dm$.
For every $s_a\in D_\dm$, we now discuss how to exactly compute $\phi(s_a)$ through \eqref{phi:w:v} within finite steps. Because $\sm_\infty(a,\dm)>0$, as we already know in the proof of \cref{thm:int}, $\tz_{A_{n_s},\dm^{n_s},\gamma}$ with $\gamma:=\dm^{m_s}(\dm^{n_s}-1)s_a\in \Z$ has a simple eigenvalue $1$, and the second identity in \eqref{phi:w:v} is equivalent to saying that  $v\in \lp{0}$ is an eigenvector of $\tz_{A_{n_s},\dm^{n_s},\gamma}$ corresponding to the eigenvalue $1$. Now the value $\phi(s_a)$ can be exactly computed within finite steps as follows:

\begin{enumerate}
\item[(S1)] Compute the unique eigenvector $v\in \lp{0}$ such that $\tz_{A_{n_s},\dm^{n_s},\gamma} v=v$ and $\sum_{k\in \Z} v(k)=1$.
\item[(S2)] Then $\phi(s_a)=\dm^{m_s} [A_{m_s}*v](0)$.
\end{enumerate}
For any $s_a\not\in D_\dm$,
the set $[0,1)\cap (\cup_{j=1}^\infty [\dm^j s_a+\Z])$ must be infinite and
it is unlikely that $\phi(s_a)$ can be computed within finite steps through the refinement equation using only the mask $a\in \lp{0}$.

\section{Proof of \cref{thm:qs} on
Quasi-stationary Subdivision Schemes}
\label{sec:qs}

In this section we shall first prove \cref{thm:qs}. Then we shall discuss how to combine \cref{thm:int,thm:qs} for $r n_s$-step interpolatory $r$-mask quasi-stationary  subdivision schemes.

\begin{proof}[Proof of \cref{thm:qs}]
The key ingredient of the proof is to show that $\sr(a_\ell,\dm)>m$ for all $\ell=1,\ldots,r$ play the critical role for proving the convergence of $r$-mask quasi-stationary subdivision schemes.
Since all involved masks $a_1,\ldots,a_r$ have finite supports, we observe that
\eqref{converg:qs} holds for every $K>0$ and $v\in \lp{}$ if and only if it holds for $K=\infty$ and $v\in \lp{0}$. Hence, for simplicity of presentation, we shall assume $v\in \lp{0}$ and use $K=\infty$ in \eqref{converg:qs}.

Sufficiency. Because $\sm_\infty(a,\dm^r)>m$, we conclude from \cite[Theorem~2.1]{hj06} that the $\dm^r$-subdivision scheme with mask $a$ is $\mathscr{C}^m$-convergent and its $\dm^r$-refinable function $\phi$ belongs to $\CH{m}$. Hence, for every initial sequence $v\in \lp{0}$, we conclude that
\be \label{converg:ell0}
\lim_{n\to\infty} \sup_{k\in \Z} \left|\dm^{j(rn+\ell)} [\nabla^j \sd_{a_\ell,\dm}\cdots \sd_{a_1,\dm}(\sd_{a_r,\dm}\cdots \sd_{a_1,\dm})^n v](k)-\eta_v^{(j)}(\dm^{-rn-\ell} k)\right|=0
\ee
holds for $\ell=0$ and $\ell=r$, where we used the convention that $\sd_{a_\ell,\dm}\cdots \sd_{a_1,\dm}$ is the identity mapping for $\ell=0$.
To prove \eqref{converg:ell0} for all $\ell=1,\ldots,r$, we have to prove \eqref{converg:ell0} for every $\ell \in \{1,\ldots,r-1\}$.
Note that $\sd_{a_r,\dm}\cdots \sd_{a_1,\dm}=\sd_{a,\dm^r}$.
By the assumption in \eqref{cond:qs}, for $j=0,\ldots,m$, there exists a unique finitely supported sequence $b_j\in \lp{0}$ such that
\be \label{aellbj}
\widetilde{\pa_1}(z^{\dm^{\ell-1}})\cdots \widetilde{\pa_\ell}(z)=(1+z+\cdots+z^{\dm^\ell-1})^j \widetilde{\pb_j}(z)
\ee
and $\sr(b_j, \dm^\ell)\ge m+1-j\ge 1$. Noting that $\nabla^j \sd_{a_\ell,\dm}\cdots\sd_{a_1,\dm}=
\sd_{b_j,\dm^\ell}\nabla^j$,
to prove \eqref{converg:ell0}, we have to prove the following equivalent form of \eqref{converg:ell0}: For $\ell\in \{1,\ldots,r-1\}$ and $j=0,\ldots,m$,
\be \label{converg:s}
\lim_{n\to\infty} \sup_{k\in \Z} \left|\dm^{j(rn+\ell)} [\sd_{b_j,\dm^\ell} \nabla^j \sd_{a,\dm^r}^n v](k)-\eta_v^{(j)}(\dm^{-rn-\ell} k)\right|=0.
\ee
From the following identity
\begin{align*}
&[\dm^{j(rn+\ell)}\sd_{b_j,\dm^\ell} \nabla^j \sd_{a,\dm^r}^n v](k)-\eta_v^{(j)}(\dm^{-rn-\ell} k)\\
&\qquad
=\dm^{j\ell} \left[ \sd_{b_j,\dm^\ell} \left(\dm^{jrn}[\nabla^j \sd_{a,\dm^r}^n v](\cdot)-\eta_v^{(j)}(\dm^{-rn} \cdot)\right)\right](k)+
\dm^{j\ell}\left[\sd_{b_j,\dm^\ell}(\eta_v(\dm^{-rn}\cdot))\right](k)-
\eta_v^{(j)}(\dm^{-rn-\ell} k),
\end{align*}
we conclude that
\be \label{est:I1I2}
\sup_{k\in \Z}\left|\dm^{j(rn+\ell)}[\sd_{b_j,\dm^\ell} \nabla^j \sd_{a,\dm^r}^n v](k)-\eta_v^{(j)}(\dm^{-rn-\ell} k)\right|
\le I_1+I_2
\ee
with
\[
I_1:=\sup_{k\in \Z}\left|\dm^{j\ell}\left[\sd_{b_j,\dm^\ell} \left(\dm^{jrn}[\nabla^j \sd_{a,\dm^r}^n v](\cdot)-\eta_v^{(j)}(\dm^{-rn} \cdot)\right)\right](k)\right|
\]
and
\[
I_2:= \sup_{k\in \Z}\left|\dm^{j\ell}[\sd_{b_j,\dm^\ell}(\eta_v(\dm^{-rn}\cdot))](k)-
\eta_v^{(j)}(\dm^{-rn-\ell} k)]\right|.
\]
Using the fact that $\|\sd_{b_j,\dm^\ell} w\|_{\lp{\infty}}\le \dm^\ell \|b_j\|_{\lp{1}} \|w\|_{\lp{\infty}}$, we conclude that
\[
I_1\le \dm^{(j+1)\ell} \|b_j\|_{\lp{1}} \left\| \dm^{jrn} [\nabla^j \sd_{a,\dm^r}^n v](\cdot)-\eta_v^{(j)}(\dm^{rn} \cdot)\right\|_{\lp{\infty}},
\]
which goes to $0$ as $n\to\infty$ by the proved fact that \eqref{converg:ell0} holds with $\ell=0$ and $\sd_{a_r,\dm}\cdots \sd_{a_1,\dm}=\sd_{a,\dm^r}$.

Note that $b_j\in \lp{0}$ is finitely supported and by $\sr(b_j,\dm^\ell)\ge m+1-j\ge 1$, $b_j$ must have at least order one sum rules with respect to $\dm^\ell$, that is, $\sum_{\gamma\in \Z} b_j(k+\dm^\ell \gamma)=\dm^{-\ell}\sum_{\gamma\in \Z} b_j(\gamma)=\dm^{-(j+1)\ell}$ for all $k \in \Z$, due to \eqref{aellbj} and our assumption $\sum_{\gamma\in \Z} a_q(\gamma)=1$ for all $q=1,\ldots,r$.
 Consequently,
\begin{align*}
&\dm^{j\ell}\left[\sd_{b_j,\dm^\ell} (\eta_v^{(j)}(\dm^{-rn}\cdot))\right](k)
-\eta_v^{(j)}(\dm^{-rn-\ell}k)\\
&\qquad =\dm^{(j+1)\ell} \sum_{\gamma\in \Z\cap [\dm^{-\ell}k-\dm^{-\ell}\fs(b_j)]} b_j(k-\dm^\ell \gamma)\left [\eta_v^{(j)}(\dm^{-rn}\gamma)-
\eta_v^{(j)}(\dm^{-rn} \dm^{-\ell}k)\right].
\end{align*}
Take $N\in \N$ such that $\fs(b_j)\subseteq [-N,N]$. Then for all $\gamma\in \Z\cap [\dm^{-\ell}k-\dm^{-\ell}\fs(b_j)]$, we have
\[
|\dm^{-rn}\gamma-\dm^{-rn}\dm^{-\ell}k|\le
\dm^{-rn}|\gamma-\dm^{-\ell}k|\le \dm^{-rn-\ell}N.
\]
Note that $\eta_v^{(j)}$ is a compactly supported uniformly continuous function on $\R$ because $v\in \lp{0}$ and $\phi$ has compact support.
Therefore, we conclude from the above inequalities that
\[
|I_2|\le \dm^{(j+1)\ell} \|b_j\|_{\lp{1}}\sup_{|x-y|\le \dm^{-rn-\ell}N} |\eta_v^{(j)}(x)-\eta_v^{(j)}(y)|,
\]
which goes to $0$ as $n\to\infty$. This proves \eqref{converg:s}. Hence, \eqref{converg:qs} must hold.

Necessity. Suppose that \eqref{converg:qs} holds. In particular, \eqref{converg:qs} holds with $n$ being replaced by $rn$. Hence, the $\dm^r$-subdivision scheme with mask $a$ must be $\mathscr{C}^m$-convergent. By \cite[Theorem~2.1]{hj06}, we conclude that $\sm_\infty(a,\dm^r)>m$. This proves the first part of \eqref{cond:qs}. Moreover, by the discussion immediately above the proof of \cref{thm:int}, we conclude that the function $\eta_v$ in \eqref{converg:qs} must satisfy $\eta_v=\sum_{k\in \Z} v(k) \phi(\cdot-k)$. In particular, we have $\eta_{\td}=\phi$ in \eqref{converg:qs}.
Now we use the proof by contradiction to prove $\sr(a_\ell,\dm)>m$ for all $\ell=1,\ldots,r$. Suppose not.
Then $j:=\sr(a_\ell,\dm) \le m$ for some $\ell=1,\ldots,r$. Since $j=\sr(a_\ell,\dm)$, we can write
\[
\widetilde{\pa_\ell}(z)=(1+z+\cdots+z^{\dm-1})^j \widetilde{\pb_\ell}(z)
\]
for some $b_\ell\in \lp{0}$ such that $\sr(b_\ell,\dm)=0$.
By \eqref{converg:qs} with $v=\td$, using $\nabla^j \sd_{a_\ell,\dm}=\sd_{b_\ell,\dm}\nabla^j$, we have
\be \label{aell:qs}
\lim_{n\to\infty} \sup_{k\in \Z}
\left|\dm^{j(rn+\ell)} \left[\sd_{b_\ell,\dm} \nabla^j \sd_{a_{\ell-1},\dm}\cdots \sd_{a_1,\dm} \sd_{a,\dm^r}^n\td\right](k)
-\phi^{(j)}(\dm^{-rn-\ell}k)\right|=0.
\ee
Now we can decompose the expression on the left-hand side of \eqref{aell:qs} into
\be \label{J1J2}
\dm^{j(rn+\ell)} \left[\sd_{b_\ell,\dm} \nabla^j \sd_{a_{\ell-1},\dm}\cdots \sd_{a_1,\dm} \sd_{a,\dm^r}^n\td\right](k)-\phi^{(j)}(\dm^{-rn-\ell}k)
=J_1(k)+J_2(k)
\ee
with
\[
J_1(k):=\dm^j \left[\sd_{b_\ell,\dm} \left(\left[\dm^{j(rn+\ell-1)}\nabla^j \sd_{a_{\ell-1},\dm}\cdots \sd_{a_1,\dm} \sd_{a,\dm^r}^n\td\right](\cdot)
-\phi^{(j)}(\dm^{-rn-\ell+1}\cdot)\right)\right](k)
\]
and
\[
J_2(k):=
\dm^j [\sd_{b_\ell,\dm} (\phi^{(j)}(\dm^{-rn-\ell+1}\cdot))](k)
-\phi^{(j)}(\dm^{-rn-\ell}k).
\]
Because \eqref{converg:qs} holds, we particularly have
\[
\lim_{n\to\infty} \sup_{k\in \Z}
|\dm^{j(n+\ell-1)} [\nabla^j \sd_{a_{\ell-1},\dm}\cdots \sd_{a_1,\dm} \sd_{a,\dm^r}^n\td](k)-\phi^{(j)}(\dm^{-rn-\ell+1}k)|=0.
\]
Hence, using the above identity and the same argument for $I_1$, we obtain $\lim_{n\to\infty} \sup_{k\in \Z}|J_1(k)|=0$. Consequently, we conclude from \eqref{aell:qs} and \eqref{J1J2} that
\be \label{J2}
\lim_{n\to\infty} \sup_{k\in \Z} |J_2(k)|=0.
\ee
Because $\sr(b_\ell,\dm)=0$, by \eqref{sr:2}, there must exist $\tilde{k} \in \Z$ such that $c:=\sum_{\gamma\in \Z} b(\tilde{k}+\dm \gamma)\ne \frac{1}{\dm} \sum_{\gamma\in \Z} b_\ell(\gamma)=\dm^{-j-1}$. That is, $\dm^{j+1}c\ne 1$. Then for every $k\in [\tilde{k}+\dm \Z]$, we have
{\small{\begin{align*}
J_2(k)&=\Big(\dm^{j+1} \sum_{\gamma\in \Z} b_\ell(k-\dm \gamma) \phi^{(j)}(\dm^{-rn-\ell+1}\gamma)\Big)
-\phi^{(j)}(\dm^{-rn-\ell}k)\\
&=\dm^{j+1} \sum_{\gamma\in \Z\cap [\dm^{-1}k-\dm^{-1}\fs(b_\ell)]} b_\ell(k-\dm \gamma) \left[\phi^{(j)}(\dm^{-rn-\ell+1}\gamma)
-\phi^{(j)}(\dm^{-rn-\ell}k)\right]
+(\dm^{j+1}c-1)\phi^{(j)}(\dm^{-rn-\ell}k).
\end{align*}
}}%
Take $N\in \N$ such that $\fs(b_\ell)\subseteq [-N,N]$. Then for all $\gamma \in \Z\cap [\dm^{-1}k-\dm^{-1}\fs(b_\ell)]$, as we discussed before, we have
\[
|\dm^{-rn-\ell+1}\gamma-\dm^{-rn-\ell}k|=\dm^{-rn-\ell+1}
|\gamma-\dm^{-1} k|\le \dm^{-rn-\ell}N.
\]
Hence, for every $k\in [\tilde{k}+\dm \Z]$, we have
\begin{align*}
&\sup_{k\in [\tilde{k}+\dm \Z]}\Big|
\sum_{\gamma\in \Z\cap [\dm^{-1}k-\dm^{-1}\fs(b_\ell)]} b_\ell(k-\dm \gamma) \left[\phi^{(j)}(\dm^{-rn-\ell+1}\gamma)
-\phi^{(j)}(\dm^{-rn-\ell}k)\right]\Big|\\
&\qquad\qquad\qquad
\le \|b_\ell\|_{\lp{1}} \sup_{|x-y|\le \dm^{-rn-\ell}N} |\phi^{(j)}(x)-\phi^{(j)}(y)|,
\end{align*}
which goes to $0$ by the uniform continuity of the compactly supported continuous function $\phi^{(j)}$.
From the above inequality,
we now conclude from \eqref{J2} and $\dm^{j+1}c\ne 1$ that
\be \label{phito0}
\lim_{n\to\infty} \sup_{k\in [\tilde{k}+\dm \Z]} |\phi^{(j)}(\dm^{-rn-\ell}k)|=0.
\ee
Take $x:=\dm^{-n_0} k_0$ with $n_0\in \NN$ and $k_0\in \Z$. Then
$x=\dm^{-rn-\ell}\dm k_1$ with $k_1:=\dm^{rn+\ell-1-n_0} k_0\in \Z$ for all $n\ge (1+n_0-\ell)/r$. Consequently, for all $n\ge (1+n_0-\ell)/r$, we have
\begin{align*}
|\phi^{(j)}(x)|
&=|\phi^{(j)}(\dm^{-rn-\ell} \dm k_1)|
\le |
\phi^{(j)}(\dm^{-rn-\ell}\dm k_1)
-\phi^{(j)}(\dm^{-rn-\ell}(\tilde{k}+\dm k_1))|+
|\phi^{(j)}(\dm^{-rn-\ell}(\tilde{k}+\dm k_1))|\\
&\le \sup_{|y-z|\le \dm^{-rn-\ell}|\tilde{k}|}
|\phi^{(j)}(y)-\phi^{(j)}(z)|+
\sup_{k\in [\tilde{k}+\dm \Z]} |\phi^{(j)}(\dm^{-rn-\ell} k)|,
\end{align*}
which goes to zero as $n\to\infty$ by using \eqref{phito0} and the uniform continuity of the compactly supported continuous function $\phi^{(j)}$. Hence, $\phi^{(j)}(x)=0$, that is, $\phi^{(j)}(\dm^{-n_0} k_0)=0$ for all $n_0\in \NN$ and $k_0\in \Z$.
Since $\{\dm^{-n_0} k_0\setsp n_0\in \NN, k_0\in \Z\}$ is dense in $\R$, we conclude that $\phi^{(j)}(x)=0$ for all $x\in \R$, which implies that $\phi$ must be a polynomial of degree less than $j$.
However, $\phi$ is compactly supported, which forces $\phi$ to be identically zero, a contradiction to $\wh{\phi}(0)=1$.
Consequently, we must have $\sr(a_\ell,\dm)>m$ for all $\ell=1,\ldots,r$. This proves \eqref{cond:qs}.
\end{proof}

We make some remarks here about \cref{thm:qs}. Generalizing refinable functions, we can define compactly supported functions $\phi_1,\ldots,\phi_r$ through a chain
of nested refinement equations as follows:
\[
\wh{\phi_\ell}(\dm\xi)=\widetilde{\pa_\ell}(e^{-i\xi})\wh{\phi_{\ell+1}}(\xi),\quad
\ell=1,\ldots,r-1\quad \mbox{and}\quad
\wh{\phi_r}(\dm \xi)=\widetilde{\pa_r}(e^{-i\xi})\wh{\phi_1}(\xi)
\]
under the normalization condition $\wh{\phi_1}(0)=\cdots=\wh{\phi_r}(0)=1$.
Then we must have
\[
\wh{\phi_1}(\dm^r\xi)=
\widetilde{\pa_1}
(e^{-i\dm^{r-1}\xi})\wh{\phi_2}(\dm^{r-1}\xi)=
\widetilde{\pa_1}
(e^{-i\dm^{r-1}\xi})
\cdots \widetilde{\pa_{r-1}}(e^{-i \dm\xi})
\widetilde{\pa_r}(e^{-i\xi})\wh{\phi_1}(\xi)=
\widetilde{\pa}(e^{-i\xi})\wh{\phi_1}(\xi).
\]
Hence we must have $\phi_1=\phi$, that is,
the $\dm^r$-refinable function $\phi$ with the mask $a$ in \eqref{mask:a1r} of \cref{thm:qs} can be obtained from the functions $\phi_1,\ldots,\phi_r$ satisfying the chain of nested refinement equations. In fact, this alternative interpretation of $\phi$ allows us to obtain non-traditional wavelets from the $r$ masks $a_1,\ldots, a_r$ in \cref{thm:qs}. We shall address this direction elsewhere.

Finally, we discuss how to combine \cref{thm:int,thm:qs} for interpolatory quasi-stationary subdivision schemes. It is very natural to
obtain a mask $a$ in \eqref{mask:a1r} of \cref{thm:qs} using masks $\{a_1,\ldots,a_r\}$ and then require that the mask $a$ defined in \eqref{mask:a1r} should satisfy the conditions in \cref{thm:int} for obtaining $s_a$-interpolating $\dm^r$-refinable function $\phi$. This leads to $rn_s$-interpolating $r$-mask quasi-stationary subdivision schemes as follows:

\begin{cor}\label{cor:qs}
Let $\dm\in \N\bs\{1\}$ be a dilation factor and $r\in \N$. Let $m\in \NN$ and $a_1,\ldots,a_r\in \lp{0}$ be finitely supported masks with $\sum_{k\in \Z} a_\ell(k)=1$ for $\ell=1,\ldots,r$.
Define a mask $a\in \lp{0}$ as in \eqref{mask:a1r}
and the $\dm^r$-refinable function $\phi$ via the Fourier transform $\wh{\phi}(\xi):=\prod_{j=1}^\infty \tilde{\pa}(e^{-i \dm^{-r j}\xi})$ for $\xi\in \R$.
For a real number $s_a\in \R$ satisfying the following condition
\[
\dm^{r m_s}(\dm^{r n_s}-1)s_a\in \Z \quad\mbox{for some $m_s\in \NN$ and $n_s\in \N$},
\]
the $\dm^r$-refinable function $\phi$ is $s_a$-interpolating and
the $r$-mask quasi-stationary $\dm$-subdivision scheme with masks $\{a_1,\ldots,a_r\}$ is $\mathscr{C}^m$-convergent $\infty$-step interpolatory
if and only if
\begin{enumerate}
\item[(i)] $\sm_\infty(a,\dm^r)>m$ and $\sr(a_\ell,\dm)>m$ for all $\ell=1,\ldots,r$;

\item[(ii)] there is a finitely supported sequence $w\in \lp{0}$ such that
\begin{align}
&[A_{m_s}*w](\dm^{r m_s} k)=\dm^{-rm_s} \td(k)\qquad \forall\; k\in \Z, \label{cond:ms:qs}\\
&[A_{n_s}*w](\dm^{rm_s}(\dm^{rn_s}-1)s_a+\dm^{rn_s}k)=\dm^{-rn_s} w(k),\qquad \forall\; k\in \Z,\label{cond:ns:ws}
\end{align}
where the finitely supported masks $A_n$ are defined to be $A_n:=\dm^{-rn} \sd_{a,\dm^r}^n \td$. For the particular case $m_s=0$, the conditions in \eqref{cond:ms:qs} and \eqref{cond:ns:ws} together are equivalent to
\be \label{cond:ms=0:ws}
A_{n_s}((\dm^{rn_s}-1)s_a+\dm^{rn_s}k)=\dm^{-rn_s} \td(k)\qquad \forall\; k\in \Z.
\ee
\end{enumerate}
For the particular case $m_s=0$,
the $\infty$-step interpolatory $r$-mask quasi-stationary $\dm$-subdivision scheme with masks $\{a_1,\ldots,a_r\}$ is $rn_s$-step interpolatory with the integer shift $(\dm^{rn_s}-1)s_a$, i.e.,
\[
[\sd_{a_1,\ldots,a_r,\dm}^{qrn_s,r} v]((I+\dm^{rn_s}+\cdots+\dm^{(q-1)rn_s})(\dm^{rn_s}-1)s_a
+\dm^{qr n_s}k)=v(k),\qquad \forall\; k\in \Z, q\in \N, v\in \lp{}.
\]
\end{cor}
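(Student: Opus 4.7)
The plan is to reduce the statement to a direct combination of \cref{thm:qs} with \cref{thm:int}, the latter being applied to the stationary $\dm^r$-subdivision scheme whose mask is $a$ (as defined in \eqref{mask:a1r}). The key observation is that \cref{thm:qs} produces both the convergence criterion for the quasi-stationary scheme and the identity $\eta_v = \sum_{k\in\Z} v(k)\phi(\cdot-k)$ for the limit function, where $\phi$ is precisely the $\dm^r$-refinable function attached to $a$. Once $\eta_v$ is expressed in this form, the $\infty$-step interpolation condition \eqref{inftystep} amounts exactly to $\phi(s_a+k)=\td(k)$ for all $k\in\Z$, that is, $\phi$ being an $s_a$-interpolating $\dm^r$-refinable function.

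First, I would invoke \cref{thm:qs} to obtain the equivalence between the $\mathscr{C}^m$-convergence of the quasi-stationary $\dm$-subdivision scheme with masks $\{a_1,\ldots,a_r\}$ and item (i), namely $\sm_\infty(a,\dm^r)>m$ together with $\sr(a_\ell,\dm)>m$ for all $\ell=1,\ldots,r$, while simultaneously pinning down $\eta_v=\sum_{k\in\Z}v(k)\phi(\cdot-k)$ with $\phi\in\CH{m}$. The $\infty$-step interpolation property \eqref{inftystep} is then equivalent to $\phi(s_a+k)=\td(k)$ for all $k\in\Z$: one direction follows by taking $v=\td$, and the converse follows by linearity since $v=\sum_{k\in\Z}v(k)\td(\cdot-k)$.

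Next, I would apply \cref{thm:int} to the stationary subdivision scheme with dilation $\dm^r$ and mask $a$ whose basis function is $\phi$. Under the substitution $\dm\mapsto \dm^r$, the hypothesis \eqref{cond:sa} becomes $(\dm^r)^{m_s}((\dm^r)^{n_s}-1)s_a\in\Z$, which is exactly the condition stated in the corollary, and the identities \eqref{cond:ms}--\eqref{cond:ns} translate verbatim into \eqref{cond:ms:qs}--\eqref{cond:ns:ws} (with $A_n=\dm^{-rn}\sd_{a,\dm^r}^n\td$ matching the definition in the corollary). Consequently, $\phi\in\CH{m}$ is $s_a$-interpolating iff $\sm_\infty(a,\dm^r)>m$ and item (ii) holds. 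Combining this with the first step yields the stated equivalence, since the sum-rule conditions on the $a_\ell$ from item (i) are exactly the ingredients not supplied by \cref{thm:int} but required by \cref{thm:qs}.

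For the closing $rn_s$-step interpolation statement with $m_s=0$, I would appeal to the particular-case conclusion \eqref{cond:intsubdiv} of \cref{thm:int} applied with dilation $\dm^r$ and mask $a$: it yields $[\sd_{a,\dm^r}^{n_s} v]((\dm^{rn_s}-1)s_a+\dm^{rn_s}k)=v(k)$ for all $k\in\Z$, $v\in\lp{}$. Since $\sd_{a,\dm^r}=\sd_{a_r,\dm}\cdots\sd_{a_1,\dm}$ by \eqref{mask:a1r}, one has $\sd_{a_1,\ldots,a_r,\dm}^{rn_s,r}=\sd_{a,\dm^r}^{n_s}$, and an immediate induction on $q$ (mirroring the displayed identity following \cref{thm:int}) produces the desired $qrn_s$-step interpolation formula with the accumulated integer shift $(I+\dm^{rn_s}+\cdots+\dm^{(q-1)rn_s})(\dm^{rn_s}-1)s_a$. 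The only real care required in the whole argument is to carry through the $\dm\mapsto\dm^r$ substitution faithfully in \cref{thm:int} and to verify that the two sources of the limit function $\eta_v$ (from \cref{thm:qs} and from the $\dm^r$-stationary scheme with mask $a$) coincide; the rest is bookkeeping.
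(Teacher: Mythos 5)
Your proposal is correct and follows exactly the route the paper intends: the paper states \cref{cor:qs} without a written proof, presenting it as a direct consequence of \cref{thm:int} (applied with dilation $\dm^r$ and mask $a$) and \cref{thm:qs}, which is precisely the combination you carry out. Your added care in noting that the extra sum-rule conditions $\sr(a_\ell,\dm)>m$ come from \cref{thm:qs} rather than \cref{thm:int}, and in deriving the $q r n_s$-step formula from \eqref{cond:intsubdiv} via $\sd_{a_1,\ldots,a_r,\dm}^{rn_s,r}=\sd_{a,\dm^r}^{n_s}$, fills in exactly the bookkeeping the paper leaves implicit.
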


\bp Sufficiency. By item (i), \eqref{cond:qs} is satisfied and hence by \cref{thm:qs},  the $r$-mask quasi-stationary subdivision scheme is $\mathscr{C}^m$-convergent and $\phi\in \mathscr{C}^m(\R)$. By item (ii), the conditions in item (2) of \cref{thm:int} are satisfied with $\dm$ being replaced by $\dm^r$. Consequently, $\phi$ must be $s_a$-interpolating and the subdivision scheme must be $\infty$-step interpolatory.
If $m_s=0$, then the $r$-mask quasi-stationary subdivision scheme is $rn_s$-interpolatory.

Necessity. If the $\dm^r$-refinable function $\phi$ is $s_a$-interpolating and its subdivision scheme is $\infty$-step interpolatory, we conclude from \cref{thm:int} that item (ii) must hold. On the other hand, because the $r$-mask quasi-stationary $\dm$-subdivision scheme with masks $\{a_1,\ldots, a_r\}$ is $\mathscr{C}^m$-convergent, we conclude from \cref{thm:qs} that item (i) must hold.
\ep

\section{Conclusions}

In this paper, we introduced in \cref{sec:intro} and characterized in \cref{thm:int} all $n_s$-step interpolatory $\dm$-subdivision schemes and their $s_a$-interpolating $\dm$-refinable functions with
$n_s\in \N \cup\{\infty\}$ and
any dilation factor $\dm\in \N\bs\{1\}$.
Furthermore, inspired by \cref{thm:int} and $n_s$-step interpolatory stationary subdivision schemes, we further introduced in \cref{def:sd:qs} the notion of $n_s$-step interpolatory $r$-mask quasi-stationary subdivision schemes with masks $\{a_1,\ldots,a_r\}$, and then we characterized their convergence and smoothness properties in \cref{thm:qs}.
The provided several examples of such $n_s$-step interpolatory $\dm$-subdivision schemes in \cref{sec:alg} demonstrate their potential usefulness and advantages in CAGD, numerical PDEs, and wavelet analysis.

\bigskip

\noindent \textbf{Conflict of interest statement} The author declares no conflict of interest.

\end{document}